\documentclass[11pt]{article}

\usepackage[T1]{fontenc}
\usepackage[utf8]{inputenc}
\usepackage{lmodern}
\usepackage[margin=1in]{geometry}
\usepackage{amsmath,amsthm,amssymb,amsfonts,amscd}
\usepackage{mathtools}
\usepackage{graphicx}
\usepackage{xcolor}
\usepackage[mathscr]{euscript}
\usepackage{microtype}
\usepackage[hidelinks]{hyperref}
\usepackage[nameinlink,noabbrev]{cleveref}

\newtheorem{theorem}{Theorem}[section]
\newtheorem{lemma}[theorem]{Lemma}
\newtheorem{proposition}[theorem]{Proposition}
\newtheorem{corollary}[theorem]{Corollary}

\theoremstyle{remark}
\newtheorem{remark}[theorem]{Remark}

\providecommand{\widebar}[1]{\overline{#1}}
\numberwithin{equation}{section}

\definecolor{grey}{rgb}{0.5,0.5,0.5}
\definecolor{lightgrey}{rgb}{0.9,0.9,0.9}
\definecolor{darkgreen}{rgb}{0,0.6,0}
\definecolor{orange}{rgb}{1,0.5,0}
\definecolor{lightpink}{rgb}{1,0.714,0.757}
\definecolor{lightorange}{rgb}{1,0.855,0.725}

\def\bbA{{\Delta}}
\def\bbD{{\mathbb D}}

\def\bbN{{\mathbb N}}
\def\bbR{{\mathbb R}}
\def\bbS{{\mathbb S}}
\def\bbT{{\mathbb T}}

\def\U{{\mathcal U}}

\def\rD{{\rm D}}

\def\rI{{\rm I}}

\def\bN{{\mathbf{N}}}
\def\bfe{{\mathbf{e}}}
\def\bfu{{\text{\footnotesize$\mathbf{U}$}}}
\def\bfv{{\text{\footnotesize$\mathbf{V}$}}}
\def\bfw{{\text{\footnotesize$\mathbf{W}$}}}

\def\f{\text{\bf\emph{f}}}

\def\u{\text{\bf\emph{u}}}
\def\v{\text{\bf\emph{v}}}
\def\w{\text{\bf\emph{w}\hspace{1pt}}}

\def\y{\text{\bf\emph{y}}}
\def\z{\text{\bf\emph{z}}}
\def\rg{{\mathrm g}}

\def\sbF{{\text{\bf\emph{F}\hspace{-1pt}}}}

\def\div{{\operatorname{div}}}
\def\Def{{\operatorname{Def}}}

\def\contsubset{\hspace{1pt}{\hookrightarrow}\hspace{1pt}}

\def\n{{3}}
\def\Forall{\forall\hspace{2pt}}
\def\dfrac#1#2{\text{\small$\displaystyle{}\frac{#1}{#2}$}}
\def\smallint#1#2{{\text{\small$\displaystyle{}\int_{#1}^{#2}$\hspace{1pt}}}}

\def\dx{{\hspace{1pt}dx}}
\def\dy{{\hspace{1pt}dy}}

\def\dt{{\hspace{1pt}dt}}
\def\dS{{\hspace{1pt}dS}}
\def\scl{{\hspace{1pt};}}
\def\cm{{\hspace{1pt},}}
\def\pd{{\hspace{1pt}.}}

\def\bdy{{\partial}}
\def\Gin{{\Gamma_{\!\mathrm{in}}}}
\def\Gout{{\Gamma_{\!\text{out}}}}

\def\uB{{\u_{{}_B}\hspace{-1pt}}}
\def\rB{{\varrho_{{}_B}}}
\def\tB{{\vartheta_{\!{}_B}}}
\def\uBl{{\underline{\uB}}}
\def\uBu{{\widebar{\uB}}}
\def\rou{{\overline{\varrho_{{}_0}}}}
\def\rBu{{\widebar{\rB}}}
\def\rol{{\underline{\varrho_{{}_0}}}}
\def\rBl{{\underline{\rB}}}
\def\hpt{{\hspace{1pt}}}

\def\bD{{\widebar{\rD}}}
\def\bp{{\widebar{p}}}
\def\bu{{\widebar{\u}}}

\def\brho{{\widebar{\varrho}\hpt}}
\def\btheta{{\widebar{\vartheta}}}
\def\Varphi{{\boldsymbol{\varphi}}}

\def\sbV{{\!\text{\bf\emph{V}}\!}}

\title{Blow-up criterion to the compressible full Navier--Stokes equations
with inflow--outflow boundary conditions}

\author{C. H. Arthur Cheng\thanks{Department of Mathematics, National Central
University, Taoyuan, Taiwan. Email: \texttt{cchsiao@math.ncu.edu.tw}. The work
of this author was partially supported by the National Science and Technology
Council (NSTC), Taiwan, under grants NSTC 113-2115-M-008-004-MY2 and NSTC
115-2115-M-008-003-MY2.}
\and Young-Sam Kwon\thanks{Department of Mathematics, Dong-A University,
Busan, Korea. Email: \texttt{ykwon7210@gmail.com}.}
\and Cheng-Fang Su\thanks{Corresponding author. Department of Applied
Mathematics, National Yang Ming Chiao Tung University, Hsinchu, Taiwan.
Email: \texttt{scf1204@nycu.edu.tw}.}}

\date{}

\hypersetup{
  pdftitle={Blow-up criterion to the compressible full Navier-Stokes equations with inflow-outflow boundary conditions},
  pdfauthor={C. H. Arthur Cheng, Young-Sam Kwon, and Cheng-Fang Su}
}

\begin{document}

\maketitle

\begin{abstract}
We study the three-dimensional compressible Navier--Stokes--Fourier system
with genuine inflow--outflow boundary conditions. We establish local
existence of strong solutions in higher-order Sobolev spaces under basic
initial--boundary compatibility conditions. For time-independent boundary
data, we also prove a continuation criterion: a strong solution can be
continued as long as the velocity gradient is integrable in time in
$L^\infty$ and the maximum temperature has suitable time integrability. The
analysis hinges on boundary estimates for differentiated continuity
equations that exploit the sign of the normal velocity on the inflow
boundary.
\end{abstract}

\noindent\textbf{Keywords.}
Compressible Navier--Stokes--Fourier equations; inflow--outflow boundary
conditions; strong solutions; continuation criterion; blow-up criterion.

\medskip
\noindent\textbf{2020 Mathematics Subject Classification.}
Primary 35Q30; Secondary 35B44, 35B60, 76N06, 76N10.

\section{Introduction}

Let $\Omega\subset\mathbb{R}^{3}$ be a bounded domain with sufficiently smooth
boundary. We study the initial--boundary value problem for a viscous,
compressible, and heat-conducting fluid with density $\varrho$, velocity $\u$,
and absolute temperature $\vartheta$. In the normalized ideal-gas setting used
throughout this paper, the pressure is $p=\varrho\vartheta$, the viscosity
operator is $\Delta\u$, and the heat conductivity $\kappa$ is a positive
constant. The governing system is
\begin{subequations}\label{general_equations}
\begin{alignat}{2}
\varrho_t+\div(\varrho\u)&=0
&&\text{in }\Omega\times(0,T),\label{density_eq}\\
(\varrho\u)_t+\div(\varrho\u\otimes\u)
&=-\nabla p+\Delta\u+\varrho\f
&&\text{in }\Omega\times(0,T),\label{momentum_eq}\\
\varrho(\vartheta_t+\u\cdot\nabla\vartheta)
&=\kappa\Delta\vartheta+|\nabla\u|^2-p\,\div\u+\varrho g\quad
&&\text{in }\Omega\times(0,T),\label{temperature_eq}\\
(\varrho,\u,\vartheta)&=(\varrho_{{}_0},\u_{{}_0},\vartheta_0)
&&\text{on }\Omega\times\{t=0\},\label{initial_condition1}\\
(\u,\vartheta)&=(\uB,\tB)
&&\text{on }\partial\Omega\times(0,T),\label{boundary_condition1}\\
\varrho&=\rB
&&\text{on }\Gin\times(0,T).\label{boundary_condition2}
\end{alignat}
\end{subequations}
Here $\f$ and $g$ are the external force and heat source, respectively.

The system is open in the sense that mass may cross the boundary. We assume
that $\partial\Omega$ is the disjoint union of compact sets $\Gin$ and $\Gout$
and that, with $\bN$ denoting the outward unit normal,
\[
 \uB\cdot\bN<0\quad\text{on }\Gin,
 \qquad
 \uB\cdot\bN\geq0\quad\text{on }\Gout.
\]
Accordingly, the density is prescribed only on the inflow part $\Gin$, whereas
the velocity and temperature are prescribed on the whole boundary. Unless
stated otherwise, the boundary data may depend on time. We extend $\uB$ and
$\tB$ from $\partial\Omega$ to $\Omega$ by solving, at each time,
\begin{subequations}\label{extension_of_uBtB}
\begin{alignat}{2}
\Delta(\widetilde{\u},\widetilde{\vartheta})&={\bf0}
&&\text{in }\Omega,\\
(\widetilde{\u},\widetilde{\vartheta})&=(\uB,\tB)\quad
&&\text{on }\partial\Omega.
\end{alignat}
\end{subequations}
By a harmless abuse of notation, these extensions are again denoted by $\uB$
and $\tB$.

The inflow boundary creates a difficulty that is absent in an impermeable
domain. The continuity equation is hyperbolic in the normal direction, so the
density must be prescribed on $\Gin$ but not on $\Gout$. Moreover,
differentiating the continuity equation produces boundary fluxes involving
spatial derivatives of $\varrho$. Their signs and traces must be controlled
without imposing a density condition on the outflow boundary. This issue is
central both to the construction of high-order strong solutions and to their
continuation.

The classical theory of local strong solutions for compressible,
heat-conducting fluids goes back to Nash~\cite{Nash1962}, Matsumura and
Nishida~\cite{MatsumuraNishida1983}, and Valli and
Zajaczkowski \cite{VaZa1986}. Most of this theory concerns the Cauchy problem
or impermeable boundaries. Of particular relevance here, Valli and
Zajaczkowski treated a time-dependent non-isothermal system allowing genuine
inflow and outflow in a Hilbert-space framework. Their strong-solution theory
provides high-order spatial and temporal regularity, but requires
equation-generated compatibility conditions at the initial boundary. We
recall the relevant statement below.

The subsequent existence theory for open compressible systems developed
predominantly at the level of weak solutions. For the time-dependent
barotropic system, Novo~\cite{Novo2005} proved the global existence of weak
solutions with inflow and outflow. Choe, Novotn\'y, and
Yang~\cite{ChoeNovotnyYang2018}, and Chang, Jin, and
Novotn\'y~\cite{ChangJinNovotny2019}, subsequently established global weak
solutions for general inflow--outflow data, including large prescribed
boundary velocity and inflow density. Kwon and
Novotn\'y~\cite{KwonNovotny2021} constructed dissipative weak solutions and
proved their stability and weak--strong uniqueness.

For the full Navier--Stokes--Fourier system, Feireisl and
Novotn\'y~\cite{FeireislNovotny2021} introduced global finite-energy weak
solutions for physically realistic in/out-flow boundary conditions and
established weak--strong uniqueness. Chaudhuri and
Feireisl~\cite{ChaudhuriFeireisl2022} likewise proved the global existence of
weak solutions, together with weak--strong uniqueness, for nonhomogeneous
Dirichlet data for velocity and temperature and the associated inflow
condition for density. We emphasize that \emph{all} the existence results
cited in these two paragraphs are weak-solution or dissipative-solution results. In particular, weak--strong uniqueness asserts coincidence with a strong solution during the lifespan of the latter; it does not itself construct a strong solution.

The situation is markedly different for strong solutions. Restricting
attention to three-dimensional, time-dependent, non-isothermal compressible
systems with genuine mass inflow, to the best of our knowledge the only
published strong-existence theories directly relevant to the present problem
are the Hilbert-space result of Valli and
Zajaczkowski~\cite{VaZa1986} and the $L^p$--$L^q$ result of
Meliani~\cite{Meliani2026}. Piasecki and
Pokorn\'y~\cite{PiaseckiPokorny2014} also proved the existence of strong
solutions to the \emph{stationary} Navier--Stokes--Fourier system in a
three-dimensional channel with slip--inflow boundary conditions and data near
a nontrivial constant flow. Their theorem is a steady result, however, and
does not provide a local theory for the evolutionary initial--boundary value
problem considered here.

Neither of the two time-dependent strong-existence theories yields the local
existence theorem proved in this paper. We recall their precise statements in
Theorems~\ref{thm:existence} and~\ref{thm:meliani-existence} below; see
Remark~\ref{rem:strong-existence-comparison} for the detailed comparison with
our result.

Finite-time singularity results must be distinguished from continuation
criteria. Xin~\cite{Xin1998Blowup} proved finite-time breakdown for a class of
nontrivial smooth solutions of the compressible Navier--Stokes equations whose
initial density has compact support. This demonstrates that viscosity alone
does not preclude finite-time breakdown, but it does not identify a
continuation criterion. Such a criterion instead asks which lower-order
quantities must lose control when a local strong solution reaches the end of
its maximal interval. The classical model is the Beale--Kato--Majda criterion
for incompressible Euler flow~\cite{BealeKatoMajda1984}. In the compressible
setting, the density must also be controlled and, for the full system, the
temperature enters essentially.

For barotropic compressible flow, representative BKM-type criteria were
obtained by Huang and Xin~\cite{HuXi2010} and by Huang, Li, and
Xin~\cite{HuangLiXin2011}, the latter allowing vacuum states. For the full
heat-conducting system, blow-up or continuation criteria for strong or
classical solutions involving the density, temperature, velocity gradient, or
Serrin-type norms were developed in
\cite{Hu2009,FanJiangOu2010,SunWangZhang2011,HuangLiWang2013,HuangLi2013,
JiuWangYe2021,XieXuZhang2026}. Feireisl, Novotn\'y, and
Sun~\cite{FeireislNovotnySun2014}, by contrast, formulated a regularity
criterion for finite-energy weak solutions and used weak--strong uniqueness
together with estimates for the associated local strong solution. A
complementary direction seeks amplitude criteria in the spirit of Nash's
regularity program~\cite{Nash1958}.

Basari\'c, Feireisl, and Mizerov\'a~\cite{BaFeMi2023} established conditional
regularity for strong solutions of the Navier--Stokes--Fourier system under
boundedness of the basic thermodynamic fields and velocity. Their boundary
velocity is tangential, $\uB\cdot\bN=0$, so there is no mass inflow. Related
results include the work of Feireisl, Wen, and
Zhu~\cite{FeireislWenZhu2024} and the general-equation-of-state extension of
Abbatiello, Basari\'c, and
Chaudhuri~\cite{AbbatielloBasaricChaudhuri2025}. Abbatiello, Basari\'c,
Chaudhuri, and
Feireisl~\cite{AbbatielloBasaricChaudhuriFeireisl2026} further developed local
well-posedness and conditional regularity for inhomogeneous
Dirichlet--Neumann data. Their argument assumes that the relevant material
derivative of the velocity vanishes on the boundary and therefore does not
cover a genuine mass-inflow boundary.

The closest continuation result for a genuinely open strong flow is the work
of Abbatiello and Meliani~\cite{AbbatielloMeliani2026}. They considered the
three-dimensional \emph{barotropic} compressible Navier--Stokes system and the
local strong solution in the $L^p$--$L^q$ class described above. They proved
that, if the maximal existence time is finite, then control must be lost over
$\varrho^{-1}$, $\u$, or a suitable norm of $\nabla\varrho$. This is a
strong-solution continuation criterion, not a weak-solution existence result.
Their analysis also identifies why the standard material-derivative argument
used for $\uB\cdot\bN=0$ fails at a genuine inflow--outflow boundary. Since the
barotropic model contains no temperature equation, it does not address the
coupling among pressure work, viscous heating, and heat conduction. The
extension of this open-boundary criterion to the Navier--Stokes--Fourier system
is therefore a separate problem.

The contributions of the present paper are as follows. First, we construct a
local strong solution to~\eqref{general_equations} in the higher-order Sobolev
class
\[
 \varrho\in C_tH^2_x,\qquad
 (\u,\vartheta)\in L^\infty_tH^2_x\cap L^2_tH^3_x,
\]
from $H^2$ initial data satisfying the natural initial--boundary trace
conditions, including $\varrho_{{}_0}=\rB(0)$ on $\Gin$. In comparison with
the classical Hilbert-space construction recalled from~\cite{VaZa1986}, we do
not impose the additional $H^1_0$ compatibility conditions on the first time
derivatives generated by the equations; this reduction is accompanied by a
weaker temporal regularity class. Second, for time-independent boundary data,
we prove a continuation criterion involving
$\nabla\u\in L^1_tL^\infty_x$ and a time-integrated maximum norm of the
temperature. Third, we establish boundary estimates for the differentiated
continuity equation that explicitly exploit the sign of $\uB\cdot\bN$ on
$\Gin$. To the best of our knowledge, this is the first continuation criterion
for the full Navier--Stokes--Fourier dynamics with genuine inflow and outflow.

For comparison, we recall the following classical local strong-solution result,
which is a reformulation of Theorem~2.5 in~\cite{VaZa1986}.

\begin{theorem}[Reformulation of Theorem 2.5 in \cite{VaZa1986}]
\label{thm:existence}
Let $\partial\Omega$ be of class ${\mathscr{C}}^3$. Suppose that the external
force $\f$ and the heat source $g$ satisfy
\begin{align*}
 \f&\in L^2_{\mathrm{loc}}(0,\infty;H^1(\Omega)^3),\qquad
 \f_t\in L^2_{\mathrm{loc}}(0,\infty;L^2(\Omega)^3),\\
 g&\in L^2_{\mathrm{loc}}(0,\infty;H^1(\Omega)),\qquad
 g_t\in L^2_{\mathrm{loc}}(0,\infty;L^2(\Omega)).
\end{align*}
Assume that $\varrho_{{}_0}\in H^2(\Omega)$ and
$\varrho_{{}_0}>0$ in $\Omega$, that $\vartheta_0\in H^2(\Omega)$ and
$\vartheta_0>0$ in $\Omega$, and that
\[
 (\u_{{}_0}-\uB(0),\vartheta_0-\tB(0))\in H^1_0(\Omega)^4.
\]
Suppose, in addition, that
\begin{align}
&\frac{1}{\varrho_{{}_0}}
 \left[\div\bbS(\bbD\u_{{}_0})
       -\nabla p(\varrho_{{}_0},\vartheta_0)\right]
 -\uB_t(0)-\u_{{}_0}\cdot\nabla\u_{{}_0}+\f(0)
 \in H^1_0(\Omega)^3,\nonumber\\
&\frac{1}{c_v(\varrho_{{}_0},\vartheta_0)\varrho_{{}_0}}
 \left[\kappa\Delta\vartheta_0
       -\vartheta_0p_\vartheta(\varrho_{{}_0},\vartheta_0)\div\u_{{}_0}
       +\bbS(\bbD\u_{{}_0}):\nabla\u_{{}_0}
       +\varrho_{{}_0}g(0)\right]\nonumber\\
&\hspace{4cm}
 -\tB_t(0)-\u_{{}_0}\cdot\nabla\vartheta_0
 \in H^1_0(\Omega).
\label{initial_condition}
\end{align}
Then there exists $T>0$ and a strong solution
$(\varrho,\u,\vartheta)$ satisfying
\begin{enumerate}
\item[\rm1.]
$\varrho\in {\mathscr C}^0([0,T];H^2(\Omega))$ and
$\varrho_t\in {\mathscr C}^0([0,T];H^1(\Omega))$;
\item[\rm2.]
$(\u,\vartheta)\in L^2(0,T;H^3(\Omega))^4$,
$(\u_t,\vartheta_t)\in L^2(0,T;H^2(\Omega))^4$, and
$$
(\u_{tt},\vartheta_{tt})\in L^2(0,T;L^2(\Omega))^4;
$$
\item[\rm3.]
$\varrho>0$ and $\vartheta>0$ on $\widebar{\Omega}\times[0,T]$.
\end{enumerate}
\end{theorem}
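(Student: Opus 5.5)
This theorem is a reformulation of Theorem~2.5 in~\cite{VaZa1986}, so the proof will not rebuild the construction. It will check that our hypotheses and notation translate into theirs, and then quote their conclusion. The work is bookkeeping, but each translation step needs to be explicit.

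\textbf{Step 1: matching the constitutive setting.} Valli and Zajaczkowski treat a general Newtonian stress $\bbS(\bbD\u)$, a pressure $p(\varrho,\vartheta)$ and a specific heat $c_v(\varrho,\vartheta)$, all smooth in the region $\varrho,\vartheta>0$. I will state that the normalized choices fit this framework. In the system~\eqref{general_equations} the viscosity operator $\Delta\u$ corresponds to $\div\bbS(\bbD\u)$ with shear viscosity equal to one and the appropriate bulk viscosity. The ideal gas gives $p=\varrho\vartheta$, so $\vartheta p_\vartheta=p$, and $c_v\equiv1$. With these choices their energy equation reduces to~\eqref{temperature_eq}.

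\textbf{Step 2: boundary geometry and lifting.} Their domain has class ${\mathscr C}^3$ and its boundary splits into inflow and outflow parts, matching our sign conditions on $\uB\cdot\bN$ on $\Gin$ and $\Gout$. I will check that their regularity requirements on $(\uB,\tB,\rB)$ are met by the harmonic extension~\eqref{extension_of_uBtB}. Elliptic regularity on a ${\mathscr C}^3$ domain transfers the boundary trace regularity to $H^3$ in the interior at each time, and the same holds for the time derivatives. Their regularity requirements on $(\uB,\tB,\rB)$ are taken as standing assumptions in the statement.

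\textbf{Step 3: compatibility conditions and conclusion.} Their compatibility conditions are that $\u_t(0)-\uB_t(0)$ and $\vartheta_t(0)-\tB_t(0)$ lie in $H^1_0$, where the initial time derivatives are computed from the equations. Solving~\eqref{momentum_eq} for $\u_t$ and~\eqref{temperature_eq} for $\vartheta_t$ at $t=0$ reproduces exactly the two expressions in~\eqref{initial_condition}. Their conclusions are the regularity classes listed in items~1 and~2, together with positivity of $\varrho$ and $\vartheta$. Positivity of $\varrho$ follows from transporting the data along characteristics that enter through $\Gin$. The only real obstacle is Step~2, namely verifying that their hypotheses on the boundary data, which are stated on the boundary, hold for the extended fields in the Sobolev norms their theorem uses. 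I would handle this by citing standard trace and elliptic estimates.
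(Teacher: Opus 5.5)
Your route is the paper's: the paper gives no argument for this statement and simply quotes Theorem~2.5 of \cite{VaZa1986}, so a proof by citation plus checking of hypotheses is the right shape. However, your Step~1 asserts something false. The same error appears in Step~3, in the sentence claiming that solving \eqref{momentum_eq} and \eqref{temperature_eq} reproduces \eqref{initial_condition}.

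The problem is that the paper's normalized model is not an instance of the Newtonian framework. For $\bbS(\bbD\u)=2\mu\bbD\u+\lambda(\div\u)I$ one has $\div\bbS(\bbD\u)=\mu\Delta\u+(\mu+\lambda)\nabla\div\u$. Matching $\Delta\u$ therefore forces $\mu=1$, $\lambda=-1$, which is a negative bulk viscosity ($2\mu+3\lambda=-1$). With that choice the heating term is $\bbS(\bbD\u):\nabla\u=2|\bbD\u|^2-(\div\u)^2$, not $|\nabla\u|^2$. It is not even pointwise nonnegative: for $\u=x$ it equals $-3$. More fundamentally, for any symmetric stress depending only on $\bbD\u$ one has $\bbS:\nabla\u=\bbS:\bbD\u$. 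This vanishes, for a linear $\bbS$, on a rigid rotation $\u=\omega\times x$, whereas $|\nabla\u|^2=2|\omega|^2$. So the normalized system \eqref{general_equations} is not a special case of the Valli--Zajaczkowski system, and their energy equation does not reduce to \eqref{temperature_eq}. Likewise, solving \eqref{momentum_eq}--\eqref{temperature_eq} at $t=0$ produces expressions containing $\Delta\u_{{}_0}$ and $|\nabla\u_{{}_0}|^2$, not the ones in \eqref{initial_condition}.

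The fix is to delete this identification rather than repair it. The statement is already written in the Valli--Zajaczkowski constitutive setting, with general $\bbS$, $p(\varrho,\vartheta)$ and $c_v$. It should be read as a theorem about their system, with the compatibility conditions obtained by solving their momentum and energy equations for $\u_t(0)$ and $\vartheta_t(0)$. Your Steps~2--3 then reduce to checking that the stated hypotheses coincide with theirs, which is all the citation needs. What you cannot do is use this bookkeeping to transfer the conclusion to the paper's normalized system \eqref{general_equations}.
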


For completeness, we also record the following zero-magnetic-field
specialization of Meliani's $L^p$--$L^q$ existence theorem.

\begin{theorem}[Specialization of Theorem 3.1 in \cite{Meliani2026}]
\label{thm:meliani-existence}
Let $\Omega\subset\mathbb{R}^3$ be a bounded domain with boundary of class
${\mathscr C}^2$, and suppose that $\Gin$ is a closed surface. Let
$p,q\in(1,\infty)$ satisfy
\begin{equation}\label{Meliani_pq_condition_intro}
 q>3,\qquad
 p>\max\left\{\frac{2q}{q-1},\frac{2q}{2q-3}\right\}.
\end{equation}
Assume that the initial and boundary density data satisfy
\begin{align*}
 \rB&>0,\qquad
 \rB\in W^{1,q}(0,T;L^q(\Gin))
       \cap L^q(0,T;W^{1,q}(\Gin)),\\
 \varrho_{{}_0}&>0,\qquad
 \varrho_{{}_0}\in W^{1,q}(\Omega),\qquad
 \varrho_{{}_0}=\rB(0)\quad\text{on }\Gin.
\end{align*}
Suppose further that
\begin{align*}
 \u_{{}_0},\vartheta_0&\in B^{2(1-1/p)}_{q,p}(\Omega),\\
 \tB>0,\qquad \uB,\tB&\in F^{1-1/(2q)}_{p,q}(0,T;L^q(\partial\Omega))
 \cap L^p(0,T;W^{2-1/q,q}(\partial\Omega)),
\end{align*}
with
\[
 \u_{{}_0}=\uB(0),\qquad \vartheta_0=\tB(0)
 \quad\text{on }\partial\Omega,
 \qquad
 -\uB\cdot\bN\geq c>0
 \quad\text{on }[0,T]\times\Gin.
\]
Assume also that the gravitational potential satisfies
$G\in L^p(0,T;W^{1,q}(\Omega))$. Then there exists $T_*>0$ such that the
corresponding open Navier--Stokes--Fourier subsystem considered
in~\cite{Meliani2026}, with force $\nabla G$ and obtained by taking the magnetic
initial and boundary data equal to zero, admits a unique strong solution
satisfying
\begin{align*}
 \varrho&\in L^\infty(0,T_*;W^{1,q}(\Omega))
       \cap W^{1,\infty}(0,T_*;L^q(\Omega)),\\
 (\u,\vartheta)&\in L^p(0,T_*;W^{2,q}(\Omega))
       \cap W^{1,p}(0,T_*;L^q(\Omega)).
\end{align*}
\end{theorem}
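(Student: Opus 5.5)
The statement just recorded, Theorem~\ref{thm:meliani-existence}, is a specialization of Meliani's theorem. The plan is therefore not to reprove the $L^p$--$L^q$ theory but to check that setting the magnetic field to zero is a legitimate reduction of the open MHD-type system of~\cite{Meliani2026}, and that our hypotheses imply his. I will proceed in three steps.

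\emph{Step 1: structural reduction.} Write Meliani's system with magnetic field $\mathbf B$. Each coupling term (Lorentz force, Joule heating, and the induction nonlinearity) is at least linear in $\mathbf B$. With zero initial and boundary magnetic data, the induction equation is a linear parabolic problem in $\mathbf B$ whose coefficients depend on $\u$. Uniqueness in the solution class of~\cite{Meliani2026} then forces $\mathbf B\equiv0$, so the remaining unknowns $(\varrho,\u,\vartheta)$ solve exactly the open Navier--Stokes--Fourier subsystem with force $\nabla G$.

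\emph{Step 2: matching hypotheses.} I will verify term by term that the assumptions stated here are those of Theorem~3.1 in~\cite{Meliani2026} with zero magnetic data:
\begin{itemize}
\item the range of $(p,q)$ in \eqref{Meliani_pq_condition_intro};
\item the $\mathscr C^2$ boundary, with $\Gin$ a closed surface;
\item the regularity and positivity of $\rB$ and $\varrho_{{}_0}$, and the compatibility $\varrho_{{}_0}=\rB(0)$ on $\Gin$;
\item the Besov and Triebel--Lizorkin classes for the initial and boundary data of $\u$ and $\vartheta$, together with the trace compatibility at $t=0$;
\item the uniform inflow condition $-\uB\cdot\bN\ge c>0$.
\end{itemize}
Zero magnetic data trivially satisfy every magnetic hypothesis, including the magnetic compatibility conditions.

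\emph{Step 3: conclusion.} Meliani's theorem then yields $T_*$ and a unique solution in the stated class. The only genuine point to check is the consistency in Step~1: the solution of the reduced system, extended by $\mathbf B=0$, must lie in Meliani's uniqueness class. I expect this to be the main obstacle, in the sense that care is needed that $\mathbf B=0$ is admissible there. It is, since the zero function belongs to every function space involved.
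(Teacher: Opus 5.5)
Your route is the same as the paper's: the paper gives no separate argument and simply records Meliani's Theorem~3.1 with the magnetic initial and boundary data set to zero, and your Steps~1--3 make that specialization explicit. One ordering point needs fixing, because as written Step~3 attributes $\mathbf B\equiv0$ to comparison with ``the solution of the reduced system extended by $\mathbf B=0$'', which presupposes the very existence you are proving: the vanishing of $\mathbf B$ for Meliani's solution must instead come from uniqueness for the linear homogeneous induction problem with Meliani's $\u$ held fixed (for instance an $L^2$ energy estimate using $\nabla\u\in L^p(0,T_*;L^\infty(\Omega))$, available since $q>3$), while the extension-by-zero argument is what yields uniqueness of the Navier--Stokes--Fourier solution.
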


\begin{remark}[Comparison of the strong-existence results]
\label{rem:strong-existence-comparison}
The theorem of Valli and Zajaczkowski requires the first time derivatives
generated by the momentum and temperature equations to satisfy additional
$H^1_0$ compatibility conditions at $t=0$. Theorem~\ref{thm:main} assumes
$H^2$ initial data and only their basic trace compatibility with the prescribed
velocity and temperature on the boundary. It therefore admits initial data
excluded by Theorem~\ref{thm:existence}, at the cost of working with a weaker
temporal regularity class.

Theorem~\ref{thm:meliani-existence} establishes local well-posedness
for a time-dependent open full system. The system couples the
Navier--Stokes--Fourier equations with magnetohydrodynamics and is
formulated in the $L^p$--$L^q$ framework. If the initial and boundary
data for the magnetic field vanish, the corresponding
Navier--Stokes--Fourier subsystem is recovered.

The restriction on $(p,q)$ in
\eqref{Meliani_pq_condition_intro} is also imposed in Assumption~2
of Abbatiello and Meliani~\cite{AbbatielloMeliani2026}. By contrast,
Theorem~\ref{thm:main} yields the higher-order Sobolev regularity
\begin{align*}
 \varrho
 &\in C([0,T];H^2(\Omega)),\\
 (\u,\vartheta)
 &\in L^\infty(0,T;H^2(\Omega))
      \cap L^2(0,T;H^3(\Omega)).
\end{align*}
In three dimensions, the Sobolev embedding further gives
$$
 H^3(\Omega)
 \hookrightarrow W^{2,6}(\Omega),\qquad
 (\u,\vartheta)
 \in L^2(0,T;W^{2,6}(\Omega)).
$$
At $q=6$, however,
\eqref{Meliani_pq_condition_intro} requires $p>12/5$ and therefore excludes
the endpoint pair $(p,q)=(2,6)$. Moreover, for $q=6$ and $p>12/5$, the initial
trace space $B^{2(1-1/p)}_{6,p}(\Omega)$ required in the $L^p$--$L^q$
theory is not guaranteed by $H^2(\Omega)$ initial data. Thus Meliani's theorem
does not imply either our $C_tH^2_x$ estimate for the density or our
$L^2_tH^3_x$ estimate for velocity and temperature, and the present existence
theorem is not a corollary of that result.

For clarity, on a finite time interval,
\[
 L^\infty(0,T;W^{2,6}(\Omega))
 \hookrightarrow L^p(0,T;W^{2,6}(\Omega)),\qquad 1<p<\infty.
\]
This does not mean that the $L^p$--$L^q$ theorem produces an
$L^\infty_tW^{2,6}_x$ bound. Nor is $L^\infty_tW^{2,6}_x$ equivalent to
$L^2_tH^4_x$. Conversely, interpolation shows that a solution already
constructed in our Sobolev class belongs to certain admissible $L^p$--$L^q$
spaces with $3<q<4$; this one-way embedding does not recover the higher-order
Sobolev estimates from the $L^p$--$L^q$ existence theorem. The novelty of our
existence result is therefore the construction and propagation of this
particular higher-order Hilbert-space class under only the basic
initial--boundary compatibility, rather than the introduction of a disjoint
notion of strong solution.
\end{remark}

\subsection{The main results}

We now state the main results of this paper. The first part concerns the local-in-time existence of strong solutions to \eqref{general_equations}. The second part gives a continuation criterion, or equivalently a blow-up criterion, for such strong solutions. For this latter part, we will additionally assume that the boundary data are independent of time.

\begin{theorem}\label{thm:main}
Let $\partial\Omega$ be of class ${\mathscr{C}}^3$. Suppose that the external forcing $\f$ and heat source $g$ satisfy
\begin{align*}
\f \in L^2(0,T;H^1(\Omega))^3,\quad &\f_{\!t} \in L^2(0,T;H^{-1}(\Omega))^3,\\
g \in L^2(0,T;H^1(\Omega)),\quad &g_t \in L^2(0,T;H^{-1}(\Omega))
\end{align*}
and $\uB$, $\tB$ are functions denoting the boundary data satisfying
\begin{alignat*}{2}
(\partial_t^k \uB, \partial_t^k \tB) &\in L^2(0,T;H^{3-2k}(\Omega))^4 &&\text{for $k=0,1,2$}, \\
(\partial_t^k \uB, \partial_t^k \tB) &\in L^\infty(0,T;H^{2-2k}(\Omega))^4 \qquad&&\text{for $k=0,1$.}
\end{alignat*}
Assume also that $\rB$ is strictly positive and has the regularity required by
the inflow transport problem and the boundary estimates below, with
$\varrho_{{}_0}=\rB(0)$ on $\Gin$. Let $(\varrho_{{}_0}, \u_{{}_0}, \vartheta_0) \in H^2(\Omega)^5$, and $\varrho_{{}_0} > 0$ on $\widebar{\Omega}$, $\vartheta_0 > 0$ on $\widebar{\Omega}$. If it holds the first order compatibility conditions
$$
\u_{{}_0} = \uB \text{ \ and \ } \vartheta_0 = \tB \qquad\text{on}\quad \bdy\Omega\cm
$$
then there exists $0 < T^* \le T$ and functions $(\varrho,\u,\vartheta)$ satisfying
\begin{enumerate}
\item[\rm1.] $\varrho \in {\mathscr{C}}([0,T^*];H^2(\Omega))$, $\varrho_t \in {\mathscr{C}}([0,T^*];H^1(\Omega))$;
\item[\rm2.] $(\partial_t^k \u, \partial_t^k \vartheta) \in L^2(0,T^*;H^{3-2k}(\Omega))^4$ for $k=0,1,2$ and\\ $(\partial_t^k \u, \partial_t^k \vartheta) \in L^\infty(0,T^*;H^{2-2k}(\Omega))^4$ for $k=0,1$;
\item[\rm3.] $\varrho(x,t) > 0$ and $\vartheta(x,t) > 0$ in $\widebar{Q_{T^*}}$;
\end{enumerate}
such that $(\varrho,\u,\vartheta)$ is a solution to \eqref{general_equations}.
\end{theorem}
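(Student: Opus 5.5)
We will construct the solution by a linearization and fixed-point (or iteration) scheme, with all estimates closed in the class $X_T$ described in items 1--2 of Theorem~\ref{thm:main}. First we homogenize the boundary data: write $\u=\uB+\v$ and $\vartheta=\tB+\w$, using the harmonic extensions from \eqref{extension_of_uBtB}. Then $(\v,\w)$ has zero trace, and the first-order compatibility $\u_{{}_0}=\uB(0)$, $\vartheta_0=\tB(0)$ on $\bdy\Omega$ gives $(\v,\w)(0)\in H^2\cap H^1_0$. Given $(\bu,\btheta)$ in a ball of $X_T$ with $\bu=\uB$ on $\bdy\Omega$, we solve three linear problems in turn.

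First we solve the transport problem $\varrho_t+\bu\cdot\nabla\varrho+\varrho\,\div\bu=0$ with $\varrho=\rB$ on $\Gin$ by characteristics. The assumption $\uB\cdot\bN<0$ on $\Gin$ makes characteristics enter only through $\Gin$, and $\uB\cdot\bN\ge0$ on $\Gout$ means no data are needed there. Then we solve the parabolic Lam\'e/heat systems
\[
\varrho\v_t-\Delta\v=F(\varrho,\bu,\btheta),\qquad \varrho\w_t-\kappa\Delta\w=G(\varrho,\bu,\btheta),
\]
with zero Dirichlet data. These are solved by Galerkin approximation, using the positive lower bound of $\varrho$ that follows from the characteristic representation together with $\rB>0$ and $\varrho_{{}_0}>0$. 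Positivity of $\vartheta$ follows from a minimum principle for the linearized heat equation on a short time, since $\vartheta_0,\tB>0$.

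The second step is the uniform a priori estimates for the map $(\bu,\btheta)\mapsto(\u,\vartheta)$.

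\emph{Parabolic part.} We test with $\v_t$ and with $-\Delta\v$, and use elliptic regularity $\|\v\|_{H^3}\lesssim\|\Delta\v\|_{H^1}$ (valid since $\bdy\Omega\in{\mathscr C}^3$). We then differentiate in time and test with $\v_t$. This gives $\v_t\in L^\infty L^2\cap L^2H^1$ and $\v_{tt}\in L^2H^{-1}$, matching the stated class with $k=1,2$.

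The key point is that $\v_t(0)$ is required only in $L^2$, not in $H^1_0$. The quantity $\varrho_{{}_0}\v_t(0)$ is read off from the equation as an $L^2$ function because $\u_{{}_0},\varrho_{{}_0}\in H^2$. This is exactly why the higher compatibility conditions \eqref{initial_condition} are avoided, at the price of the weaker temporal class. The assumptions $\f_t,g_t\in L^2H^{-1}$ enter only through duality with $\v_t\in L^2H^1_0$.

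\emph{Density part.} We apply $\nabla$ and $\nabla^2$ to the continuity equation and multiply by $\nabla\varrho$ and $\nabla^2\varrho$. Integration by parts produces the boundary flux $\tfrac12\int_{\bdy\Omega}(\uB\cdot\bN)|\nabla^k\varrho|^2\dS$:
\begin{itemize}
\item on $\Gout$ this term has a good sign and is dropped;
\item on $\Gin$ the full gradient of $\varrho$ must be recovered from the data, since it is not a free quantity there.
\end{itemize}
Tangential derivatives on $\Gin$ come from $\rB$. The normal derivative is obtained by solving the equation itself for $\partial_\bN\varrho$, dividing by $\uB\cdot\bN$, which is bounded away from zero on the compact set $\Gin$. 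This expresses $\partial_\bN\varrho$ in terms of $\partial_t\rB$, tangential derivatives of $\rB$, and $\div\bu$. Repeating the procedure at second order uses traces of $\nabla^2\bu$, which are controlled by $\|\bu\|_{H^3}\in L^2_t$.

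Combining these with Gronwall, we obtain $\|\varrho\|_{C_tH^2}\le C(\text{data})\exp(C\int_0^T\|\bu\|_{H^3})$. Continuity in time follows by the standard renormalization/regularization argument.

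\textbf{Closing the scheme.} With these bounds, a small $T^*$ makes the map send a ball of $X_{T^*}$ into itself. We then show it is a contraction in the lower norm
\[
\varrho\in C_tL^2,\qquad (\u,\vartheta)\in L^\infty L^2\cap L^2H^1,
\]
where the density difference again uses the inflow sign to drop the $\Gout$ flux. Since the ball is compact enough (weakly closed), the limit lies in $X_{T^*}$ and solves \eqref{general_equations}.

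\textbf{Expected main obstacle.} The main difficulty should be the $H^2$ boundary estimate for $\varrho$ on $\Gin$. Second normal derivatives must be reconstructed from the once-differentiated equation, and the corner behaviour between $\Gin$ and $\Gout$ must be handled; since both are compact and disjoint, these are separate components, which helps. I would first try a local flattening near $\Gin$ with a partition of unity and treat normal derivatives algebraically as above, placing the required regularity of $\rB$ into the hypothesis ``$\rB$ has the regularity required.''
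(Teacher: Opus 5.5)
Your overall architecture matches the paper's: characteristics for $\varrho$, linear parabolic solves, a self-map bound on a short interval, then a fixed point. Two of your choices differ from the paper but are fine. You homogenize the boundary data and test the time-differentiated equation with $\v_t$, where the paper uses the test function $\w=\bD_t\u-\sbV_t-\sbV\cdot\nabla\u$ of Lemma~\ref{lem:linearized_material_derivative_estimate}. You close with a low-norm contraction, where the paper uses Schauder--Tychonoff; your version even gives uniqueness. One minor point: a minimum principle is not available for $\vartheta$, since $g$ has no sign. Short-time positivity should instead come from continuity of $\vartheta$ on $\widebar{\Omega}\times[0,T^*]$.

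The genuine gap is the $H^2$ bound for $\varrho$ at $\Gin$. Your claim that the second-order reconstruction there uses only traces of $\nabla^2\bu$ is false. In the flat model, differentiate \eqref{density_eq} in $x_3$ and solve for $\varrho_{,33}$ on $\Gin$. This produces $\partial_t(\varrho_{,3}|_{\Gin})$, which by \eqref{normal_derivative_in_terms_of_tangential_derivative1} contains $-\rB\,\partial_t(\div\bu|_{\Gin})/u_3$, i.e. a trace of $\nabla\bu_t$. Because $\varrho_{,33}$ is transported along the characteristics entering through $\Gin$, $\|\nabla^2\varrho(t)\|^2_{L^2(\Omega)}$ controls $\int_0^t\|\partial_s(\div\bu)\|^2_{L^2(\Gin)}\,ds$, up to lower-order terms. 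For a general $\bu\in X_{T^*}$ this quantity is not finite. Indeed $\nabla\bu\in L^2_tH^2_x\cap H^1_tL^2_x$ has boundary traces only in $H^{3/4}$ in time. So your bound $\|\varrho\|_{C_tH^2}\le C\exp\bigl(C\int_0^T\|\bu\|_{H^3}\,dt\bigr)$ fails already for the linearized transport problem.

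A sharper estimate will not fix this, because this is exactly where the compatibility conditions you drop come in. Take smooth data with $(\u_1-\partial_t\uB(0))\cdot\bN\neq0$ on part of $\Gin$, where $\u_1$ is as in \eqref{defn:u1}. Then $\u_t$ develops a Dirichlet boundary layer with $\|\div\u_t\|_{L^2(\Gin)}\sim t^{-1/2}$. The integral above then diverges logarithmically, so $\nabla^2\varrho(t)\notin L^2$ for small $t>0$, along the front of characteristics issued from $\Gin$ at $t=0$.

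Independently, $\varrho\in C_tH^2$ needs the corner condition $\partial_t\rB(0)+\div(\varrho_{{}_0}\u_{{}_0})=0$ on $\Gin$. Without it, $\nabla\varrho$ jumps across that same characteristic surface. The half-line problem $\varrho_t+c\varrho_x=0$ with inflow data already shows this.

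You cannot borrow the paper's step here either. In \eqref{D2_inflow_boundary_estimate_temp3} the paper treats this same $\partial_t\varrho_{,3}$ term. It bounds $\|\div\u_t\|^2_{L^2(\partial\Omega)}$ by $\|\div\u_t\|_{L^2}\|\div\u_t\|_{H^1}$ and then by $\|\rD_t\u\|^2_{H^1}$. That last step tacitly uses $\u_t\in H^2$, which the class does not provide.

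There are two ways to get a correct argument. One is to have $\u_t\in L^2_tH^2_x$, which is what the Valli--Zajaczkowski compatibility conditions of Theorem~\ref{thm:existence} provide, together with the transport corner condition. The other is to lower the whole solution class, for instance to the $W^{1,q}$-density framework of Theorem~\ref{thm:meliani-existence}.
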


The point of comparison with~\cite{VaZa1986} is the compatibility structure,
not a blanket weakening of every hypothesis. In the classical construction,
the traces of the equation-generated quantities
\[
\u_t(0)=\frac{1}{\varrho_{{}_0}}
\big[\Delta\u_{{}_0}-\nabla(\varrho_{{}_0}\vartheta_0)\big]
-\u_{{}_0}\cdot\nabla\u_{{}_0}+\f(0)
\]
and
\[
\vartheta_t(0)=\frac{1}{\varrho_{{}_0}}
\big[\kappa\Delta\vartheta_0+|\nabla\u_{{}_0}|^2
-\varrho_{{}_0}\vartheta_0\div\u_{{}_0}\big]
-\u_{{}_0}\cdot\nabla\vartheta_0+g(0)
\]
must match the time derivatives of the boundary data in a stronger trace
sense. Theorem~\ref{thm:main} retains the $H^2$ initial regularity but requires
only the displayed trace compatibility of $\u_{{}_0}$ and $\vartheta_0$.

We now formulate the continuation result using a maximal existence time; this
avoids ambiguity between a local existence time and a genuine blow-up time.

\begin{remark}
The comparison with Theorem~\ref{thm:existence} concerns compatibility, not
only the nominal Sobolev order of the initial data. Although
Theorem~\ref{thm:main} assumes
\[
 (\varrho_0,\u_0,\vartheta_0)\in H^2(\Omega)^5,
\]
the result of Valli and Zajaczkowski additionally requires the first time
derivatives generated by the momentum and temperature equations to satisfy
$H^1_0$ boundary compatibility conditions at $t=0$. By contrast,
Theorem~\ref{thm:main} requires only the basic trace conditions on
$\u_0$ and $\vartheta_0$. The Sobolev embeddings
$H^2(\Omega)\hookrightarrow W^{1,6}(\Omega)\hookrightarrow L^\infty(\Omega)$
ensure that the equation-generated initial time derivatives belong to
$L^2(\Omega)$, which is the level used in our construction.
\end{remark}
The following is the blow-up criterion result as the second result.
\begin{theorem}\label{thm:main2}
Given assumptions in Theorem \ref{thm:main}, we further assume that the boundary data are independent of time. If in addition
\[
\f \in L^1(0,T;L^\infty(\Omega)^3),
\]
the solution $(\varrho,\u,\vartheta)$ blows up at $t = T^*$ if and only if there exists $\delta > 0$ such that
\begin{equation}\label{our_blow-up_criterion}
\lim_{T \nearrow T^*} \int_0^T \Big[\|\nabla \u(t)\|_{L^\infty(\Omega)} + \|\vartheta(t)\|_{L^\infty(\Omega)}^{5+\delta}\Big] \, dt = \infty\pd
\end{equation}
\end{theorem}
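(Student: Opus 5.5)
We argue by contradiction, in the standard way for continuation criteria. One direction is trivial: if the integral in \eqref{our_blow-up_criterion} stays finite for every $\delta>0$ while the solution blows up, we must derive a contradiction. (If the solution does not blow up, then $\u\in L^2_tH^3_x$ and $\vartheta\in L^\infty_tH^2_x$ give finiteness by Sobolev embedding.) So we assume $T^*<\infty$ is maximal and that, for some $\delta>0$,
\[
M:=\int_0^{T^*}\Big[\|\nabla\u\|_{L^\infty}+\|\vartheta\|_{L^\infty}^{5+\delta}+\|\f\|_{L^\infty}\Big]\dt<\infty .
\]
The aim is to show that $(\varrho,\u,\vartheta)$ stays bounded in the class of Theorem~\ref{thm:main} on $[0,T^*)$, and that $\varrho$ and $\vartheta$ keep positive lower bounds. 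Then $(\varrho,\u,\vartheta)(t)$ for $t$ near $T^*$ is admissible initial data for Theorem~\ref{thm:main}. Since the boundary data are time independent, the compatibility $\u=\uB$, $\vartheta=\tB$ on $\bdy\Omega$ is automatic, and the local existence time depends only on these bounds. Restarting the solution then contradicts maximality.

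\textbf{Step 1: density bounds.} We integrate the continuity equation along characteristics. Backward characteristics starting in $\Omega$ either reach $t=0$ or exit through $\Gin$, where $\varrho=\rB>0$; they cannot come from $\Gout$, where $\uB\cdot\bN\ge0$. Along them, $\frac{d}{dt}\log\varrho=-\div\u$, so
\[
\min(\rol,\rBl)\,e^{-M}\le\varrho\le\max(\rou,\rBu)\,e^{M}.
\]

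\textbf{Step 2: lower-order energies.} We test the momentum equation with $\u-\uB$ and the temperature equation with $\vartheta-\tB$. Boundedness of $\varrho$, of $\nabla\u$ in $L^1_tL^\infty_x$ and of $\vartheta$ in $L^{5+\delta}_tL^\infty_x$ then gives $\u\in L^\infty_tL^2_x\cap L^2_tH^1_x$ and likewise for $\vartheta$. The pressure-work term $\varrho\vartheta\div\u$ is handled by $\|\div\u\|_{L^\infty}$. Next we test with $\u_t$ and $\vartheta_t$ to obtain $L^\infty_tH^1_x$ bounds for $\u$ and $\vartheta$, and $L^2_tL^2_x$ bounds for $\sqrt\varrho\,\u_t$ and $\sqrt\varrho\,\vartheta_t$; this is possible because $\u_t$ and $\vartheta_t$ vanish on $\bdy\Omega$ when the data are time independent. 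The viscous heating $|\nabla\u|^2\vartheta_t$ and the term $\varrho\vartheta\div\u\,\vartheta_t$ are absorbed using $\|\nabla\u\|_{L^\infty}$. The exponent $5+\delta$ should enter when closing the $\vartheta$ estimate. The critical terms are of the form $\int\varrho|\u|^2|\nabla\vartheta|\,|\vartheta|$ and $\nabla p=\vartheta\nabla\varrho+\varrho\nabla\vartheta$ tested against $\u_t$. We treat these with Gagliardo--Nirenberg and Young, trading powers of $\|\vartheta\|_{L^\infty}$ against $\|\nabla^2\vartheta\|_{L^2}$ so that a Gronwall weight $\|\vartheta\|_{L^\infty}^{5+\delta}$ appears. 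We also differentiate the equations in time and test with $\u_t$ and $\vartheta_t$ to get $\sqrt\varrho\,\u_t$ and $\sqrt\varrho\,\vartheta_t$ in $L^\infty_tL^2_x$, with $\nabla\u_t$ and $\nabla\vartheta_t$ in $L^2_tL^2_x$; the $H^{-1}$ regularity of $\f_t$ and $g_t$ suffices here. Elliptic regularity for the Dirichlet problems then yields $\u,\vartheta\in L^\infty_tH^2_x$ once $\nabla\varrho$ is controlled in $L^\infty_tL^2_x$ (or in $L^6$).

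\textbf{Step 3: derivatives of the density (main obstacle).} We apply $\nabla$ and $\nabla^2$ to the continuity equation and test with $\nabla\varrho$ and $\nabla^2\varrho$. Integration by parts produces the boundary flux $\frac12\int_{\bdy\Omega}(\uB\cdot\bN)|\nabla^k\varrho|^2\dS$. On $\Gout$ this term has a good sign. On $\Gin$ it has a bad sign, and there we use $\uB\cdot\bN<0$ to reconstruct the full gradient from the boundary data. Tangential derivatives equal those of $\rB$. The normal derivative is solved from the equation itself,
\[
\partial_{\bN}\varrho=-\frac{1}{\uB\cdot\bN}\big(\rB\div\u+\uB_\tau\cdot\nabla_\tau\rB\big)\quad\text{on }\Gin,
\]
which is legitimate since $\rB$ is time independent. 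Second derivatives on $\Gin$ are handled by differentiating once more and using trace estimates for $\nabla^2\u$ in terms of $\|\u\|_{H^3}$ with a small constant. Gronwall with weight $\|\nabla\u\|_{L^\infty}$ then bounds $\|\nabla\varrho\|_{L^2}$ and $\|\nabla\varrho\|_{L^6}$. Feeding these back into Step~2 and into $H^3$ elliptic estimates gives $\varrho\in L^\infty_tH^2_x$ and $(\u,\vartheta)\in L^2_tH^3_x$. The delicate point is the inflow trace of $\nabla^2\varrho$, which couples to $\|\u\|_{H^3}$ and must be absorbed by the dissipation.

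\textbf{Step 4: positivity of $\vartheta$ and conclusion.} We apply the minimum principle to the temperature equation: since $|\nabla\u|^2\ge0$, $\varrho g$ is controlled and $p\,\div\u=\varrho\vartheta\div\u$, we get $\vartheta\ge\min(\min\vartheta_0,\min\tB)\,e^{-CM}$. With all bounds uniform on $[0,T^*)$, we restart by Theorem~\ref{thm:main} and reach the contradiction with maximality.
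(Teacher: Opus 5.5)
There is a genuine gap in your Step 2. You never obtain a bound on $\|\u\|_{L^\infty(Q_T)}$, and without it the viscous heating cannot be controlled when only $\int_0^T\|\nabla\u\|_{L^\infty}\dt<\infty$ is assumed.

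\begin{itemize}
\item Testing the temperature equation with $\vartheta_t$ produces $\int_\Omega|\nabla\u|^2\vartheta_t\dx$. ``Absorbing it using $\|\nabla\u\|_{L^\infty}$'' means bounding it by $\|\nabla\u\|_{L^\infty}\|\nabla\u\|_{L^2}\|\vartheta_t\|_{L^2}$. After Young's inequality the Gronwall weight is $\|\nabla\u\|_{L^\infty}^2$, which is not integrable in time; the paper remarks on exactly this about the heat source.
\item Bounding the term by $\|\nabla\u\|_{L^4}^4$ does not close either. Interpolation alone gives $\|\nabla\u\|_{L^4}^4\preceq\|\nabla\u\|_{L^2}\|\u\|_{H^2}^3$, which is superlinear in the energy.
\item Differentiating in time to move $\vartheta_t$ to the sup side does not help. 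It generates $2\int_\Omega(\nabla\u:\nabla\u_t)\vartheta_t\dx$, which has the same defect.
\end{itemize}

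The missing idea is the Moser iteration of Proposition~\ref{prop:2.2} and Corollary~\ref{cor:u_Linfty_bound}. Testing the momentum equation with $|\u-\uB|^q(\u-\uB)$ and iterating in $q$ yields $\u\in L^\infty(Q_T)$. This is precisely where $p/\varrho=\vartheta\in L^{5+\delta}(0,T;L^\infty(\Omega))$ enters, through $2/(1-s)>5$ for $s>3/5$. It is also where $\f\in L^1(0,T;L^\infty(\Omega))$ enters.

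With this bound, $\|\nabla\u\|_{L^4}^2\preceq\|\u\|_{L^\infty}\|\u\|_{H^2}$ makes $\|\nabla\u\|_{L^4}^4$ linear in $\|\u\|_{H^2}^2$. The elliptic estimate then gives $\|\u\|_{H^2}^2\preceq 1+\|\rD_t\u\|_{L^2}^2+\|\nabla\vartheta\|_{L^2}^2+\|\vartheta\|_{L^\infty}^2\|\nabla\varrho\|_{L^2}^2+\|\f\|_{L^2}^2$. The estimates then close by Gronwall with weight $1+\|\nabla\u\|_{L^\infty}+\|\vartheta\|_{L^\infty}^2\in L^1(0,T)$. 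Your remark that the exponent $5+\delta$ ``should enter'' in the $\vartheta$ estimate is a symptom of this missing step.

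Three further points need repair.

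\begin{itemize}
\item Steps 2 and 3 are circular as ordered. The $H^2$ elliptic bound for $\u$ contains $\|\vartheta\|_{L^\infty}\|\nabla\varrho\|_{L^2}$. Conversely, the $\nabla\varrho$ estimate is forced by $\|\u\|_{H^2}$, including its $\Gin$ term through $\|\div\u\|_{L^2(\bdy\Omega)}$. The paper resolves this by putting $\|\nabla\varrho\|_{L^2}^2$ into the same Gronwall functional as $\|\nabla\u\|_{L^2}^2$, $\|\nabla\vartheta\|_{L^2}^2$ and $\|\sqrt{\varrho}\,\rD_t\u\|_{L^2}^2$.
\item $\u\in L^\infty_tH^2_x$ cannot follow from elliptic regularity at that stage, because $\|\vartheta\|_{L^\infty}$ is only $L^{5+\delta}$ in time. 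The paper first obtains $L^2_tH^2_x$, then $L^2_tH^3_x$ together with $\u_t\in L^2_tH^1_x$, and only then $L^\infty_tH^2_x$ by interpolation.
\item The minimum-principle lower bound for $\vartheta$ in your Step 4 needs a sign condition on $g$. Saying ``$\varrho g$ is controlled'' does not give positivity.
\end{itemize}
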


We make three remarks on the blow-up criterion in Theorem \ref{thm:main2} regarding three comments on improvement in this paper and why the methods used in previous results are not available.

\begin{remark}
The criterion \eqref{our_blow-up_criterion} should be understood as a Beale–Kato–Majda type condition adapted to the in-flow/out-flow boundary. In several blow-up criteria for the compressible Navier--Stokes--Fourier system, the continuation argument is carried out under the assumption
\begin{equation*}
(\varrho,\u,\vartheta)\in L^\infty(0,T;L^\infty(\Omega)).
\end{equation*}
However, the boundary conditions of these studies are fundamentally different from those of this study.  In particular, the condition
\begin{equation*}
\u\cdot\bN=0
\quad\text{on }\partial\Omega
\end{equation*}
plays a crucial role in the estimates. This boundary structure allows one to use test functions of the type
\begin{equation*}
D_t\u-\u\cdot\nabla\uB
\end{equation*}
in the higher order energy estimates. We shall point out the precise estimate later; see Section~\ref{sec:L2H1_estimate_for_Dtu} below. This mechanism breaks down for the in-flow/out-flow boundary condition considered here, since in general
\begin{equation*}
\u\cdot\bN\neq 0
\quad\text{on }\partial\Omega.
\end{equation*}
Consequently,
\begin{equation*}
D_t\u-\u\cdot\nabla\uB
\end{equation*}
does not satisfy the homogeneous boundary condition needed for the corresponding test-function argument. In fact, this obstruction already appears in the simpler pure out-flow case. Therefore, the continuation argument based only on
\begin{equation*}
(\varrho,\u,\vartheta)\in L^\infty(0,T;L^\infty(\Omega))
\end{equation*}
is not available in the present setting. Instead, we impose the  Beale–Kato–Majda type condition
\begin{equation*}
\nabla\u\in L^1(0,T;L^\infty(\Omega)),
\end{equation*}
which directly controls the deformation of the flow and allows us to close the higher order estimates in the presence of in-flow and out-flow boundaries.
\end{remark}
\begin{remark}
The temperature condition in \eqref{our_blow-up_criterion},
\begin{equation*}
\vartheta\in L^{5+\delta}(0,T;L^\infty(\Omega)),
\qquad \delta>0,
\end{equation*}
is not meant to be a structural restriction of the model. One could replace it by the stronger and more standard assumption
\begin{equation*}
\vartheta\in L^\infty(0,T;L^\infty(\Omega)),
\end{equation*}
which immediately implies the above condition on every finite time interval. The purpose of using the integrability condition
\begin{equation*}
\int_0^T \|\vartheta(t)\|_{L^\infty(\Omega)}^{5+\delta},dt<\infty
\end{equation*}
is to formulate the continuation criterion in a more quantitative way. Namely, our estimates require only a certain amount of time integrability of the maximum temperature, rather than a uniform-in-time upper bound. Thus the exponent $5+\delta$ reflects the level of temporal integrability needed by the present argument. In this sense, the condition is a technical refinement of the usual $L^\infty_tL^\infty_x$ assumption, and it measures how far one can weaken the temperature control while still retaining continuation of the strong solution.
\end{remark}

\begin{remark}

In Theorem \ref{thm:main2} we assume that the initial density is strictly positive,
\begin{equation*}
\varrho_0>0
\quad\text{on }\overline{\Omega}.
\end{equation*}
This is stronger than the assumptions in some blow up criterion  results, such as \cite{HuangLiXin2011}, where vacuum is allowed initially. The reason is again related to the in-flow/out-flow boundary condition. In the present problem, the density is prescribed on the in-flow part of the boundary, and the higher order estimates for $\varrho$ must be derived directly from the continuity equation. When differentiating the density equation, one encounters boundary contributions of the form
\begin{equation*}
\int_{\Gamma_{\mathrm{in}}}
(\u\cdot\bN)
|\nabla^k\varrho|^2\,dS,
\end{equation*}
together with lower order commutator terms. These boundary terms have no analogue in the impermeable case $\u\cdot\bN=0$, and their control is one of the main difficulties caused by the in-flow boundary. In our approach, the estimates for these terms rely on the strict positivity of the density, together with the prescribed positive in-flow density, in order to propagate the required lower and upper bounds for $\varrho$. Therefore, the assumption
\begin{equation*}
\inf_{\overline{\Omega}}\varrho_0>0
\end{equation*}
is not merely a technical convenience; it is tied to the use of the density equation at the boundary and to the in-flow/out-flow structure of the problem.

\end{remark}

 To summarize, the criterion \eqref{our_blow-up_criterion} differs from previous $L^\infty_tL^\infty_x$-type continuation criteria because the present in-flow/out-flow boundary condition destroys the boundary cancellation used in the impermeable case. The replacement of the velocity bound by the BKM-type condition on $\nabla\u$, the quantitative time-integrability condition on $\vartheta$, and the strict positivity assumption on $\varrho_0$ are all consequences of this boundary structure.

\subsection{Outlines}
The rest of this paper is organized as follows. In Section 2 we
introduce the basic assumptions and derive several preliminary a priori estimates, including the blow-up criterion, estimates for the density under the inflow-outflow boundary condition, and the linear parabolic regularity result used in the local existence argument. In Section 3 we prove the local-in-time existence of strong solutions by a fixed-point argument. Section 4 is devoted to the proof of the continuation criterion. Finally, the appendices collect the boundary estimates on the inflow part and the auxiliary technical estimates used in the local construction.

\section{Basic Assumptions and Some A-Priori Bounds}
Let $\sbV$ solve the Poisson equation
\begin{alignat*}{2}
\bbA \sbV &= \bbA \uB \qquad&&\text{in}\quad\Omega\cm\\
\sbV &= {\bf 0} &&\text{on}\quad\bdy\Omega\cm
\end{alignat*}
and set $\widetilde{\u}_{{}_B} = \uB - \sbV$. Then $\bbA \widetilde{\u}_{{}_B} = {\bf 0}$ in $\Omega$ and $\widetilde{\u}_{{}_B} = \uB$ on $\bdy\Omega$. Moreover, $\partial_t^k \widetilde{\u}_{{}_B}$ has the same regularity as $\partial_t^k \uB$; thus that the statement in the main theorem will not change when $\uB$ is replaced by $\widetilde{\u}_{{}_B}$. Therefore, without loss of generality, we can assume that $\bbA \uB = {\bf 0}$. Similarly, we also assume that $\Delta \tB = 0$ in $\Omega$.

In the following, we introduce the quantities used in this paper:
\begin{align*}
\rol &= \inf_\Omega \varrho_{{}_0}, \
\rBl = \inf_{\Gin} \rB, \
\rou = \sup_\Omega \varrho_{{}_0}, \
\rBu = \sup_{\Gin} \rB, \\
\uBu &= \sup_{\partial\Omega} |\uB|, \ \uBl = \inf_{\Gin} (-\uB\cdot \bN)\pd
\end{align*}
The quantities $\rou$, $\rBu$, $\uBu$ are assumed to be positive, and the inflow assumption
\begin{equation}\label{inflow-outflow_BC}
\uB \cdot \bN < 0 \qquad\text{on}\quad \Gin
\end{equation}
and the compactness of $\Gin$ show that $\uBl$ is also positive.

\subsection{Basic assumptions}

Throughout this section, we work under the assumptions and notations introduced above.
In particular, we regard $\uB$ and $\tB$ as their harmonic extensions to $\Omega$, and we use the quantities
\[
\varrho_0,\ \varrho_B,\ \overline{\varrho}_0,\ \overline{\varrho}_B,\ \overline{u}_B,\ \underline{u}_B
\]
to denote the bounds associated with the initial and boundary data. Moreover, the positivity of $\varrho_{{}_0}$ and $\varrho_B$, together with the inflow
condition on $\Gamma_{\mathrm{in}}$, will be used repeatedly in the a priori estimates below.

\subsection{Preliminary results}

\begin{lemma}\label{lem:lower_upper_bound_for_rho}
It holds that
\begin{align}\label{Linfty_bound_for_rho}
&\min\big\{\rol,\rBl\big\} \exp\Big(\!-\! \int_0^t \|\div \u\|_{L^\infty(\Omega)}\dt'\Big) \nonumber\\
&\quad\le \varrho(t,x) \le \max\big\{\rou,\rBu\big\} \exp\Big(\int_0^t \|\div \u\|_{L^\infty(\Omega)}\dt'\Big)\pd
\end{align}
\end{lemma}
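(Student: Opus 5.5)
The plan is to use the method of characteristics for the continuity equation \eqref{density_eq}, written in nonconservative form
\[
\frac{d}{ds}\varrho(X(s),s) = -\varrho(X(s),s)\,\div\u(X(s),s),
\]
along the flow map $\dot X(s)=\u(X(s),s)$. For the strong solution of Theorem~\ref{thm:main} we have $\u\in L^2(0,T;H^3)$, so $\nabla\u\in L^1_tL^\infty_x$ and $\u$ is Lipschitz in $x$. Hence the characteristics are well defined as long as they stay in $\widebar\Omega$.

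Fix $(x,t)$ with $x\in\Omega$ and trace the characteristic through $(x,t)$ backward in time. Two cases arise.

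\emph{Case 1: the curve stays in $\widebar\Omega$ on all of $[0,t]$.} Then it starts at some $y\in\widebar\Omega$ with $\varrho(y,0)=\varrho_{{}_0}(y)\in[\rol,\rou]$.

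\emph{Case 2: the curve exits at some earlier time $s_0\in(0,t)$.} Going backward, a curve can only leave through a boundary point where $\u\cdot\bN=\uB\cdot\bN\le 0$. On $\Gout$ we have $\uB\cdot\bN\ge0$, so backward curves cannot exit there; the degenerate tangential points with $\uB\cdot\bN=0$ need a short justification, which I take up below. Thus the curve exits through $\Gin$, and there $\varrho=\rB\in[\rBl,\rBu]$.

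In either case, integrating the ODE from the starting time $s_*\in\{0,s_0\}$ to $t$ gives
\[
\varrho(x,t)=\varrho(X(s_*),s_*)\exp\Big(-\int_{s_*}^t\div\u(X(s),s)\,ds\Big).
\]
Bounding $|\div\u|\le\|\div\u\|_{L^\infty}$ and enlarging the interval $[s_*,t]$ to $[0,t]$ yields both inequalities in \eqref{Linfty_bound_for_rho}, with $\min\{\rol,\rBl\}$ and $\max\{\rou,\rBu\}$ as the constants.

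The main obstacle is rigor at the boundary. Characteristics tangent to $\bdy\Omega$ could touch $\Gout$ where $\uB\cdot\bN=0$, and the solution is only strong, not classical. To sidestep both issues I would instead prove the bound by an energy-type maximum principle. Set
\[
M(t)=\max\{\rou,\rBu\}\exp\Big(\int_0^t\|\div\u\|_{L^\infty}\,dt'\Big),
\]
and test the equation satisfied by $\varrho-M$ against $(\varrho-M)_+$. Since $M$ depends only on time, this gives
\[
\frac12\frac{d}{dt}\int_\Omega(\varrho-M)_+^2
+\frac12\int_{\bdy\Omega}\u\cdot\bN\,(\varrho-M)_+^2\,dS
=-\int_\Omega\Big(\tfrac12\div\u\,(\varrho-M)_+^2+(M\div\u+M')(\varrho-M)_+\Big)\,dx .
\]
The boundary term is handled by the inflow--outflow structure. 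On $\Gin$, $(\varrho-M)_+=(\rB-M)_+=0$. On $\Gout$, $\u\cdot\bN\ge0$, so the integrand is nonnegative. On the right, the choice of $M$ gives $M'=M\|\div\u\|_{L^\infty}$, so $M\div\u+M'\ge0$ and that term has a good sign. What remains is bounded by $\|\div\u\|_{L^\infty}\int_\Omega(\varrho-M)_+^2$. Since $(\varrho_{{}_0}-M(0))_+=0$, Gronwall gives $(\varrho-M)_+\equiv0$.

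The lower bound is symmetric: use $m(t)=\min\{\rol,\rBl\}\exp\big(-\int_0^t\|\div\u\|_{L^\infty}\,dt'\big)$ and test with $(m-\varrho)_+$.

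The regularity $\varrho\in C_tH^2_x$ and $\u\in L^2_tH^3_x$ justifies the integrations by parts and the boundary traces.
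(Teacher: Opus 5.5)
Your proof is correct, and it is not the paper's route. Your opening characteristic sketch is essentially what the paper does. It traces the backward flow map and splits into your two cases. Either the trajectory stays in $\Omega$ on $[0,t]$ and picks up $\varrho_{{}_0}$, or it last entered through $\Gin$ and picks up $\rB$. The paper then integrates $\log\varrho$ along the trajectory and bounds $|\div\u|$ by its sup norm. It does not address the points you raise: trajectories grazing $\Gout$ where $\uB\cdot\bN=0$, and the fact that $\varrho$ is only a strong solution, so the ODE along a trajectory touching $\partial\Omega$ needs justification.

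Your final argument is a Stampacchia-type truncation comparison with $M(t)$ and $m(t)$, and it is correct as written. Your identity is exactly what testing $\sigma_t+\u\cdot\nabla\sigma+\sigma\div\u=-(M\div\u+M')$, with $\sigma=\varrho-M$, against $\sigma_+$ gives. The boundary term is nonnegative: $(\rB-M)_+=0$ on $\Gin$ because $M\ge\rBu$, and $\u\cdot\bN\ge0$ on $\Gout$, degenerate points included. The source term has a good sign since $M'+M\div\u=M(\div\u+\|\div\u\|_{L^\infty})\ge0$. Gronwall then closes because $\int_0^t\|\div\u\|_{L^\infty}\,dt'<\infty$. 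The lower bound works the same way, using $m'+m\div\u\le0$ and $(m-\rB)_+=0$ on $\Gin$.

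Each approach buys something different. The characteristic proof gives an explicit pointwise representation of $\varrho$ along trajectories. It also matches how the paper builds the linearized density $\brho$ in the fixed-point scheme.

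Your energy proof needs only $\varrho(t)\in H^1$ with traces and $\nabla\u\in L^1_tL^\infty_x$. It handles tangential boundary points automatically through the sign of $\u\cdot\bN$, so no invariance or grazing analysis of the flow is needed. It yields the bound a.e. in $x$, which upgrades to every $x$ because $\varrho(t)\in H^2(\Omega)\subset C(\widebar{\Omega})$.
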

\begin{proof}
Let $t\in [0,T]$ be fixed. Let $X = X(s,x)$ be the reverse flow map of the velocity $\u$ satisfying
$$
\partial_s X(s,x) = \u(s,X(s,x))\cm\qquad X(t,x) = x \in \Omega\pd
$$
The map $X$ is used to trace the material point $y$ from which the flow map carries $y$ to $x$ at time $t$.
Then for a function $f = f(t,x)$, using the notation $(f \circ X)(s,x) = f(t-s,X(t-s,x))$, the equation of continuity \eqref{density_eq} implies that
$$
\partial_s (\varrho \circ X) = (\varrho \circ X) (\div \u)\circ X
$$
Therefore,
\begin{equation}\label{ODE_for_rho}
\partial_s \log |\varrho \circ X| = (\div \u)\circ X\pd
\end{equation}
\begin{enumerate}
\item {\bf The case that $X(s,x) \in \Omega$ for all $0 \le s \le t$}: Integrating \eqref{ODE_for_rho} from $t$ to $0$, we obtain
    $$
    \varrho(t,x) = \varrho(t,X(t,x)) = \varrho_{{}_0}(X(0,x)) \exp\Big(\int_t^0 \big[(\div \u)\circ X\big](x,t') dt'\Big)\pd
    $$
    Therefore,
    the inequality above shows that in the case $X(s,x) \in \Omega$ for all $s\in [0,t]$,
    \begin{equation}\label{rho_bounds1}
    \rol \exp\Big(\!-\! \int_0^t \|\div \u\|_{L^\infty(\Omega)}\dt'\Big) \le \varrho(t,x) \le \rou \exp\Big(\int_0^t \|\div \u\|_{L^\infty(\Omega)}\dt'\Big)\pd
    \end{equation}
\item {\bf The case that $X(s,x) \in \Gin$ for some $0 \le s < t$ while $X(t',x) \in \Omega$ for all $s < t' \le t$}: Integrating \eqref{ODE_for_rho} from $t$ to $s$, we obtain
    \begin{align*}
    \varrho(t,x) &= \varrho(t,X(t,x)) = \varrho(s,X(s,x)) \exp\Big(\int_t^s \big[(\div \u)\circ X\big](x,t') dt'\Big)\pd \\
    &= \rB(X(s,x)) \exp\Big(\int_t^s \big[(\div \u)\circ X\big](x,t') dt'\Big)\pd
    \end{align*}
    Therefore,
    the inequality above shows that in the case $X(s,x) \in \Gin$ for some $s\in [0,t)$ while $X(t',x) \in \Omega$ for all $t' \in (s,t]$,
    \begin{equation}\label{rho_bounds2}
    \rBl \exp\Big(\!-\! \int_0^t \|\div \u\|_{L^\infty(\Omega)}\dt'\Big) \le \varrho(t,x) \le \overline{\rB} \exp\Big(\int_0^t \|\div \u\|_{L^\infty(\Omega)}\dt'\Big)\pd
    \end{equation}
\end{enumerate}
The final result follows from combining \eqref{rho_bounds1} and \eqref{rho_bounds2}.
\end{proof}

The following Lemma \ref{lem:moser_iteration} is extracted from Proposition 2.2 in \cite{HuXi2010}. We single this part out
\begin{lemma}\label{lem:moser_iteration}
Let $c_1,c_2,r > 1$ be constants, and $C_0$ be a positive constant. If $A:\bbN \to \bbR^+$ is a sequence satisfying
\begin{equation}\label{iterative_inequality}
A(k+1) \le c_1 r^{\,c_2 k} \big[A(k)^{\,r} + 1 \big] + C_0^{\hpt r^k} \qquad\forall\,k \ge k_0\cm
\end{equation}
then there exists positive constant $C = C(c_1,c_2,r,C_0, A(k_0))$ such that $A(k) \le C^{\,r^k}$ for all $k\ge k_0$.
\end{lemma}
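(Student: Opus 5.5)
We would prove the lemma by passing to logarithms, so that the nonlinear recursion \eqref{iterative_inequality} becomes an essentially linear one. The aim is to show that
\[
B(k):=r^{-k}\log\big(A(k)+1\big)
\]
is bounded uniformly in $k\ge k_0$. Once $B(k)\le M$ for all $k\ge k_0$, we get $A(k)\le e^{Mr^k}$, so the lemma holds with $C=e^M$. Working with $A+1$ rather than $A$ keeps all logarithms nonnegative, since $A$ takes values in $\bbR^+$.

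\textbf{Step 1 (one-step bound).} Assume without loss of generality that $C_0\ge 1$. Since $A(k)^r+1\le (A(k)+1)^r$, the hypothesis gives
\[
A(k+1)+1\le c_1r^{c_2k}(A(k)+1)^r + C_0^{r^k}+1 \le 3c_1 r^{c_2k}\,\max\big\{(A(k)+1)^r,\,C_0^{r^k}\big\}.
\]
Taking logarithms and dividing by $r^{k+1}$ yields
\[
B(k+1)\le \max\big\{B(k),\ r^{-1}\log C_0\big\} + \frac{\log(3c_1)+c_2k\log r}{r^{k+1}}.
\]

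\textbf{Step 2 (summation).} Set $\varepsilon_k:=r^{-k-1}\big(\log(3c_1)+c_2k\log r\big)$. Because $r>1$, the geometric decay beats the linear growth, so $S:=\sum_{k\ge k_0}\varepsilon_k<\infty$, and $S$ depends only on $c_1,c_2,r$. Define $M_k:=\max\{B(k),\,r^{-1}\log C_0\}$. Step 1 gives $M_{k+1}\le M_k+\varepsilon_k$. By induction,
\[
B(k)\le M_{k_0}+S\le \max\big\{r^{-k_0}\log(A(k_0)+1),\ r^{-1}\log C_0\big\}+S=:M
\]
for all $k\ge k_0$. This $M$ depends only on $(c_1,c_2,r,C_0,A(k_0))$, with $k_0$ treated as fixed, as the statement allows.

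\textbf{Main obstacle.} The only step needing care is the bookkeeping in Step 1. The factor $r^{c_2k}$ grows with $k$, but after the logarithm it contributes only the additive term $c_2k\log r$. Dividing by $r^{k+1}$ then makes this term summable, which is exactly where $r>1$ is used. The additive term $C_0^{r^k}$ is absorbed through the maximum, because it sits exactly at the scale $r^k$ and so contributes a constant after normalization. No other difficulty is expected.
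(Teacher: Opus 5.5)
Your argument is correct and complete. Each step checks out:
\begin{itemize}
\item the inequality $A(k)^r+1\le (A(k)+1)^r$ holds for $r>1$ and $A(k)\ge0$;
\item the three terms are absorbed into $3c_1r^{c_2k}\max\{(A(k)+1)^r,C_0^{r^k}\}$, which uses $c_1r^{c_2k}\ge1$;
\item the bookkeeping $M_{k+1}\le M_k+\varepsilon_k$ follows because $\varepsilon_k\ge0$.
\end{itemize}

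The paper gives no proof of this lemma; it only quotes it from Proposition~2.2 of \cite{HuXi2010}. Your renormalization $B(k)=r^{-k}\log(A(k)+1)$ is the standard way such Moser-type recursions are closed, so your proposal supplies a self-contained justification of what the paper takes for granted.

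One small improvement: you do not need to treat $k_0$ as fixed. Since $r^{-k_0}\log(A(k_0)+1)\le\log(A(k_0)+1)$ and $\sum_{k\ge k_0}\varepsilon_k\le\sum_{k\ge0}\varepsilon_k$, you can choose $M$ depending only on $(c_1,c_2,r,C_0,A(k_0))$. This matches exactly the dependence of $C$ claimed in the statement.
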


\begin{proposition}\label{prop:2.2}
Let $\varrho,\u$ satisfy the momentum equation \eqref{momentum_eq} in the pointwise sense, and $\u=\uB$ on $\partial\Omega$. If
\[
\|\varrho\|_{L^\infty(Q_T)}<\infty
\qquad\text{and}\qquad
\|p(\varrho,\vartheta)\varrho^{-1}\|_{L^{5+\delta}(0,T;L^\infty(\Omega))}<\infty
\quad\text{for some }\,\delta > 0\cm
\]
then
\begin{equation}\label{momentum_Linfty_bound}
\begin{array}{rl}
\|\varrho \u\|_{L^\infty(Q_T)}
\le C\big(\hspace{-9pt}&\|\varrho_{{}_0}\|_{L^\infty(\Omega)}, \|\u_{{}_0}\|_{L^\infty(\Omega)}, \|\uB\|_{W^{1,\infty}(\Omega)}, \|\varrho\|_{L^\infty(Q_T)}, \vspace{.15cm}\\
&\|p/\varrho\|_{L^{5+\delta}(0,T;L^\infty(\Omega))}, \|\f\|_{L^1(0,T;L^\infty(\Omega))},\mu,\lambda,\Omega,T,\delta\big)\pd
\end{array}
\end{equation}
\end{proposition}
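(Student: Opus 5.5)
We would follow the Moser-type iteration of Proposition~2.2 in \cite{HuXi2010}, adapted to the inhomogeneous Dirichlet condition $\u=\uB$. Since $\u$ does not vanish on $\bdy\Omega$, we work with $\w=\u-\uB$ (the harmonic extension, time independent here), so $\w=\mathbf 0$ on $\bdy\Omega$. Combining the continuity equation with \eqref{momentum_eq} gives the nonconservative form
\[
\varrho(\w_t+\u\cdot\nabla\w)=\Delta\w-\nabla p+\varrho\f-\varrho\u\cdot\nabla\uB ,
\]
using $\Delta\uB=\mathbf 0$. The last term is bounded by $\|\varrho\|_{L^\infty}\|\uB\|_{W^{1,\infty}}|\u|\le C(|\w|+1)$.

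\textbf{Step 1: $L^{q}$ energy estimate for each $q=2^k$.} We multiply the equation by $q|\w|^{q-2}\w$ and integrate over $\Omega$. Because $\w$ vanishes on the boundary, the viscous term integrates by parts with no boundary contribution. We also use
\[
\int_\Omega\varrho(\partial_t+\u\cdot\nabla)|\w|^q\dx=\frac{d}{dt}\int_\Omega\varrho|\w|^q\dx-\int_{\bdy\Omega}\varrho(\u\cdot\bN)|\w|^q\dS ,
\]
where the boundary integral vanishes since $\w=0$ there. These produce the good terms
\[
\frac{d}{dt}\int_\Omega\varrho|\w|^q\dx+q\int_\Omega|\w|^{q-2}|\nabla\w|^2\dx+q(q-2)\int_\Omega|\w|^{q-2}|\nabla|\w||^2\dx .
\]
The pressure term is integrated by parts, again with no boundary term:
\[
q\int_\Omega p\,\div(|\w|^{q-2}\w)\dx\le Cq^2\int_\Omega p|\w|^{q-2}|\nabla\w|\dx .
\]
Writing $p=\varrho\cdot(p/\varrho)$ and using Young's inequality, half is absorbed into the dissipation, leaving $Cq^3\|p/\varrho\|_{L^\infty}^2\int_\Omega\varrho|\w|^{q-2}\dx$ (the factor $\|\varrho\|_{L^\infty}$ is kept as a constant). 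The force term is bounded by $q\|\f\|_{L^\infty}\int_\Omega\varrho|\w|^{q-1}\dx$, and the $\uB$ term gives $Cq\int_\Omega\varrho(|\w|^q+|\w|^{q-1})\dx$.

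\textbf{Step 2: closing the recursion.} We set $A(k)=\sup_t\|\varrho^{1/q}\w\|_{L^q}^q$ with $q=2^k$. We would Gronwall the inequality and bound the source terms using H\"older in time:
\[
\int_0^T\|p/\varrho\|_{L^\infty}^2\Big(\int_\Omega\varrho|\w|^{q-2}\dx\Big)\dt .
\]
This requires controlling an intermediate norm by the dissipation $\int_0^T\|\nabla|\w|^{q/2}\|_{L^2}^2\dt$ through the Sobolev embedding $H^1_0\subset L^6$ and interpolation in the parabolic space $L^\infty_tL^2_x\cap L^2_tL^6_x\subset L^{10/3}_{t,x}$. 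The exponent $5+\delta$ is exactly what makes the H\"older pairing with $L^{10/3}$-type parabolic norms subcritical. The aim is an inequality of the form \eqref{iterative_inequality} with $r=2$ (or $r$ slightly above $1$ depending on $\delta$), constants depending on the listed quantities, and $C_0$ coming from $\|\f\|_{L^1L^\infty}$ and the data.

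\textbf{Step 3: conclusion.} Lemma~\ref{lem:moser_iteration} then gives $A(k)\le C^{r^k}$. Taking $q$-th roots and letting $k\to\infty$ yields $\|\varrho^{1/q}\w\|_{L^\infty_tL^q_x}\le C$. Hence $\|\varrho\w\|_{L^\infty(Q_T)}\le C$, and adding $\varrho\uB$ gives \eqref{momentum_Linfty_bound}. Where $\varrho$ may be small, the weight issue is handled by estimating $\varrho|\w|\le\|\varrho\|_{L^\infty}^{1-1/q}(\varrho^{1/q}|\w|)$ and passing to the limit $q\to\infty$.

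The main obstacle is Step 2: obtaining a genuinely recursive inequality (from level $q$ to level $2q$, with only polynomial growth in $q$) in the presence of the weight $\varrho$, which has no lower bound in the hypotheses. This requires choosing the parabolic interpolation so that the dissipation of the unweighted $|\w|^{q/2}$ controls the weighted norms, and tracking the powers of $q$ precisely.
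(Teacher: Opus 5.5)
Your Step 1 (test the $\w=\u-\uB$ equation with a power of $|\w|$, use $\w|_{\partial\Omega}=0$ to remove boundary terms, absorb the pressure via $p=\varrho\,(p/\varrho)$ and Young, handle the $\f$ and $\uB$ terms with an integrating factor) and your Step 3 agree with the paper. The gap is Step 2. The recursive inequality \eqref{iterative_inequality} is the whole content of the proposition, and you only state the intention to obtain it. Moreover, the choices you do fix there do not fit together.

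Applying $L^\infty_tL^2_x\cap L^2_tL^6_x\subset L^{10/3}_{t,x}$ to $|\w|^{q/2}$ turns the level-$q$ energy estimate into a bound on the space-time integral $\iint\varrho|\w|^{5q/3}$. It does not bound $\sup_t\int\varrho|\w|^{2q}$. The H\"older pairing in time with $\phi=\|p/\varrho\|_{L^\infty}^2\in L^{(5+\delta)/2}(0,T)$ then uses up part of the factor $5/3$. So this mechanism gives only $r=\frac53 s$ with $s\le\frac{3+\delta}{5+\delta}$, which is slightly above $1$ and never $2$. It also forces $A(k)$ to be a space-time integral rather than your $\sup_t\|\varrho^{1/q}\w\|_{L^q}^q$.

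If instead you keep $\sup_t$-norms and bound $\int_0^T\phi\int\varrho|\w|^{q-2}\,dt$ by $\|\phi\|_{L^1}\sup_t\int\varrho|\w|^{q-2}$, you get $a_q\lesssim c_0^q+q^3a_{q-2}$ for $a_q=\sup_t\int\varrho|\w|^q$. This iterates only to $a_q^{1/q}\lesssim q^{3/2}$: there is no gain in integrability, hence no $L^\infty$ bound. As it stands, no inequality of the form \eqref{iterative_inequality} has been produced.

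The paper closes this step as follows. Set $\z=|\w|^{q/2}\w$ and use the integrating factor $\Lambda=\lambda^{q+2}$, which absorbs the $\f$ and $\uB$ terms. The level-$(q+2)$ estimate then bounds both $\sup_t\Lambda(1+E)$ and $\int_0^T\Lambda\|\nabla\z\|_{L^2}^2\,dt$ by $1+c_0^{q+2}+C(q+2)^2\int_0^T\phi\,\Lambda(1+E)\,dt$.

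The lack of a lower bound on $\varrho$ is harmless because the weight is put on the $L^2$ factor. One has $\int\varrho|\z|^{10/3}\le\|\varrho\|_{L^\infty}^{1/3}(\int\varrho|\z|^2)^{2/3}\|\z\|_{L^6}^2$, and $\|\z\|_{L^6}\lesssim\|\nabla\z\|_{L^2}$ because $\z=0$ on $\partial\Omega$. Hence $\iint\varrho|\lambda\w|^{\frac53(q+2)}$ is bounded by $C$ times the $5/3$ power of the right-hand side above. H\"older in time ($\phi\in L^{1/(1-s)}$) together with Jensen in $x$ bounds $\int_0^T\phi\Lambda E\,dt$ by $C\|\phi\|_{L^{1/(1-s)}}(\iint\varrho|\lambda\w|^{(q+2)/s})^s$.

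Now choose $q+2=sr^k$, $r=\frac53 s$ and $A(k)=\iint\varrho|\lambda\w|^{r^k}$. This gives $A(k+1)\le C_0^{r^k}+Cr^{\frac{10}{3}k}(1+A(k)^r)$. Here $r>1$ exactly when $s>\frac35$, i.e. when $p/\varrho\in L^{2/(1-s)}_t$ with $2/(1-s)>5$; this is where $\delta>0$ enters.

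If you prefer to keep $r=2$ and $\sup_t$-norms, the pressure term must be absorbed at the same level. For $v=|\w|^q$, H\"older in $\varrho\,dx$ and Sobolev--Poincar\'e give $\int\varrho v^2\le C(\int\varrho v)^{4/5}\|\nabla v\|_{L^2}^{6/5}$. Young then leaves a polynomial factor in $q$ times $\|p/\varrho\|_{L^\infty}^5(\int\varrho|\w|^q)^2$, which is integrable in time under your hypothesis. Step 2 has to contain one of these two computations explicitly.
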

\begin{proof}
Let $q\ge0$ be a fixed integer number. Since $\|\varrho_{{}_0}\|_{L^\infty(\Omega)}$, $\|\u_{{}_0}\|_{L^\infty(\Omega)}$, and $\|\uB\|_{L^\infty(\Omega)}$ are finite, we have
\[
\int_\Omega \varrho_{{}_0} |\u_{{}_0}-\uB|^{q+2}\,dx \le c_0^{\,q+2}<\infty
\]
for some $c_0=c_0(\|\varrho_{{}_0}\|_{L^\infty(\Omega)},\|\u_{{}_0}\|_{L^\infty(\Omega)},\|\uB\|_{L^\infty(\Omega)})$.\vspace{.1cm}

Set $\w=\u-\uB$. With \eqref{extension_of_uBtB}, the momentum equation \eqref{momentum_eq} can be written as
\[
\varrho(\w_t+\u\cdot\nabla\w)+\nabla p(\varrho,\vartheta)
= \operatorname{div}\bbS(\nabla\w)-\varrho\,\w\cdot\nabla\uB-\varrho\,\uB\cdot\nabla\uB+\varrho\,\f.
\]
Testing this equation against $|\w|^q\w$ and using $|\w|^q\w\big|_{\partial\Omega}={\bf 0}$, we obtain
\begin{align*}
&\frac{1}{q+2}\frac{d}{dt}\int_\Omega \varrho |\w|^{q+2}\dx - \int_\Omega \operatorname{div}\bbS(\nabla \w)\cdot |\w|^q\w\dx \\
&\qquad = \int_\Omega p(\varrho,\vartheta) \div (|\w|^q \w) \dx - \int_\Omega \varrho |\w|^q \w \cdot (\w \cdot \nabla\uB) \dx \\
&\qquad\quad -\int_\Omega \varrho |\w|^q \w \cdot (\uB \cdot \nabla\uB)\dx + \int_\Omega \varrho \f \cdot |\w|^q \w \dx \pd
\end{align*}

Let $\alpha=\dfrac{q}{q+2}$\vspace{.1cm} and define $\z=|\w|^{q/2}\w$, so that $|\w|^{q+2}=|\z|^2$ and $|\w|^q\w=|\z|^\alpha\z$.
Let $\bfe=\z/|\z|$ on $\{\z\neq0\}$, and set $P_\pm=I\pm\alpha\,\bfe\otimes\bfe$.
A direct computation yields
\[
\nabla\w = |\z|^{-\alpha}P_-\,\nabla\z,
\qquad
\nabla(|\w|^q\w)=\nabla(|\z|^\alpha\z)=|\z|^{\alpha}P_+\,\nabla\z.
\]
Hence
\begin{align*}
-\int_\Omega \operatorname{div}\bbS(\nabla\w)\cdot |\w|^q\w\,dx
&=\int_\Omega \bbS(\nabla\w):\nabla(|\w|^q\w)\,dx \\
&=\int_\Omega \Big(2\mu\,\operatorname{sym}(P_- \nabla\z)+\lambda\,\operatorname{tr}(P_- \nabla\z)\,I\Big):(P_+ \nabla\z)\,dx.
\end{align*}
Since $\mu>0$ and $2\mu+3\lambda>0$, the Lam\'e form is coercive on symmetric matrices:
\[
\big(2\mu X+\lambda\,\operatorname{tr}(X)I\big):X \ge c_L\,|X|^2
\qquad \forall\, X=X^\top,
\qquad c_L:=\min\{2\mu,2\mu+3\lambda\}.
\]
Moreover, $P_-P_+=I-\alpha^2\bfe\otimes\bfe$ implies
\[
(P_-M):(P_+M)\ge (1-\alpha^2)|M|^2
\qquad \forall\, M\in\bbR^{3\times3}.
\]
Combining these facts and using Korn's inequality, we infer that there exists $c=c(\mu,\lambda,\Omega)>0$ such that
\[
-\int_\Omega \operatorname{div}\bbS(\nabla\w)\cdot |\w|^q\w\,dx
\ge c\,(1-\alpha^2)\,\|\nabla\z\|_{L^2(\Omega)}^2.
\]
Therefore, it follows that
\begin{align}
&\frac{1}{q+2}\frac{d}{dt}\int_\Omega \varrho |\z|^2 \dx + c\,(1-\alpha^2)\,\|\nabla \z\|_{L^2(\Omega)}^2 \nonumber\\
&\qquad \le \int_\Omega p(\varrho,\vartheta) \div (|\z|^\alpha\z) \dx - \int_\Omega \varrho \z \cdot (\z \cdot \nabla\uB) \dx \nonumber\\
&\qquad\quad -\int_\Omega \varrho |\z|^\alpha\z \cdot (\uB \cdot \nabla\uB)\dx + \int_\Omega \varrho \f \cdot |\z|^\alpha\z \dx \pd \label{u_Linfty_bound_ineq_temp}
\end{align}

We next estimate the right-hand side of \eqref{u_Linfty_bound_ineq_temp}. First, using \[
\operatorname{div}(|\z|^\alpha\z)=\operatorname{tr}(\nabla(|\z|^\alpha\z))\] and the pointwise bound
\[
|\nabla(|\z|^\alpha\z)| \le C\,|\z|^\alpha|\nabla\z|,
\]
Young's inequality gives rise to
\begin{align*}
\int_\Omega p(\varrho,\vartheta)\,\operatorname{div}(|\z|^\alpha\z)\,dx
&\le \frac{c(1-\alpha^2)}{2}\|\nabla\z\|_{L^2(\Omega)}^2
+ \frac{C}{(1-\alpha^2)}\int_\Omega p(\varrho,\vartheta)^2 |\z|^{2\alpha}\,dx.
\end{align*}
Since $2\alpha=2q/(q+2)$ and $\|\varrho\|_{L^\infty(Q_T)}<\infty$, H\"older's inequality yields
\[
\int_\Omega p(\varrho,\vartheta)^2 |\z|^{2\alpha}\,dx
\le C_1 \big\|p(\varrho,\vartheta)/\varrho\big\|_{L^\infty(\Omega)}^2
\Big(\int_\Omega \varrho|\z|^2\,dx\Big)^\alpha,
\]
for some $C_1=C_1(\|\varrho\|_{L^\infty(Q_T)},\Omega)$.\vspace{.1cm}

Next, since $\|\nabla\uB\|_{L^\infty(\Omega)}<\infty$ and $\|\varrho\|_{L^\infty(Q_T)}<\infty$, we have
\begin{align*}
\left|\int_\Omega \varrho|\z|^\alpha \z\cdot(\w\cdot\nabla\uB)\,dx\right| &
\le C_2 \Big(\int_\Omega \varrho|\z|^2\,dx\Big)^{\frac{q+1}{q+2}}, \\
\left|\int_\Omega \varrho|\z|^\alpha \z\cdot(\uB\cdot\nabla\uB)\,dx\right| &
\le C_2 \Big(\int_\Omega \varrho|\z|^2\,dx\Big)^{\frac{q+1}{q+2}},
\end{align*}
with $C_2=C_2(\|\varrho\|_{L^\infty(Q_T)},\|\uB\|_{W^{1,\infty}(\Omega)},\Omega)$.
Finally, the force term is bounded by
\[
\left|\int_\Omega \varrho\,\f\cdot|\z|^\alpha\z\,dx\right|
\le \|\f\|_{L^\infty(\Omega)}\int_\Omega \varrho|\z|^2\,dx.
\]

Collecting the above estimates, using $(q+2)(1-\alpha^2)=\dfrac{4(q+1)}{q+2}\ge 2$, and absorbing the gradient term, we arrive at
\begin{equation}\label{ddt_of_u_in_Lq+2_gen_new}
\frac{d}{dt}\int_\Omega \varrho|\z|^2\,dx + c\|\nabla\z\|_{L^2(\Omega)}^2
\le C_3\Big[(q+2)^2\phi(t) + (q+2)\psi(t)\Big]\Big(1+\int_\Omega \varrho|\z|^2\,dx\Big),
\end{equation}
where $\phi(t):=\|p(\varrho,\vartheta)/\varrho\|_{L^\infty(\Omega)}^2$, $\psi(t):=1+\|\f(t)\|_{L^\infty(\Omega)}$, and $C_3$ depends only on $c$, $C_1$, and $C_2$.

Let $E(t):= \smallint{\Omega}{} \varrho(t)|\z(t)|^2\,dx$ and define the integrating factor by\vspace{-.15cm}
\[
\Lambda(t):=\exp\Big(-C_3(q+2)\int_0^t\psi(s)\,ds\Big).
\]
Multiplying \eqref{ddt_of_u_in_Lq+2_gen_new} by $\Lambda(t)$, we obtain
\begin{equation}\label{ddt_of_u_in_Lq+2_gen}
\frac{d}{dt}\big[\Lambda(t)(1+E(t))\big] + c\,\Lambda(t)\|\nabla\z(t)\|_{L^2(\Omega)}^2
\le C_3(q+2)^2\phi(t)\,\Lambda(t)(1+E(t)).
\end{equation}
The Gronwall's inequality also deduces
\begin{equation}\label{consequence_of_Grownwall}
\Lambda(t)(1+E(t))
\le \exp\Big(C_3\int_0^t (q+2)^2\phi(s)\,ds\Big)\Big(1+\int_\Omega \varrho_{{}_0}|\u_{{}_0}-\uB|^{q+2}\,dx\Big),
\end{equation}
and integrating \eqref{ddt_of_u_in_Lq+2_gen} over $(0,T)$ yields
\begin{equation}\label{2.15_gen}
\begin{array}{l}
\displaystyle
\sup_{t\in[0,T]}\Lambda(t)(1+E(t)) + c\int_0^T \Lambda(t)\|\nabla\z(t)\|_{L^2(\Omega)}^2\,dt\\[2mm]
\displaystyle\qquad
\le 1+c_0^{q+2} + C_3(q+2)^2\int_0^T \phi(t)\,\Lambda(t)(1+E(t))\,dt.
\end{array}
\end{equation}

Let us define $A(k)$ by
\[
A(k):=\int_0^T\!\int_\Omega \varrho\,|\lambda(t)\w|^{r^k}\,dx\,dt
\]
To apply the result of the following Lemma \ref{lem:moser_iteration}, we must show that the assumption of A(k) in (\ref{iterative_inequality}) is satisfied.

 Denote by $\lambda(t) = \exp(-C_3 \Psi(t))$. So $\Lambda(t)=\lambda(t)^{q+2}$.
Recall that $\z=|\w|^{q/2}\w$. Then
\[
|\lambda \w|^{q+2}=\lambda^{q+2}|\w|^{q+2}=\Lambda|\z|^2 = |\Lambda^{1/2}\z|^2,
\qquad
|\lambda\w|^{\frac{5}{3}(q+2)} = |\Lambda^{1/2}\z|^{\frac{10}{3}}.
\]
By H\"older's inequality,
\begin{align}
\int_0^T\!\int_\Omega \varrho\,|\lambda\w|^{\frac{5}{3}(q+2)}\,dx\,dt
&=\int_0^T\!\int_\Omega \varrho\,|\Lambda^{1/2}\z|^{\frac{10}{3}}\,dx\,dt
=\int_0^T\!\int_\Omega \varrho\,|\Lambda^{1/2}\z|^{\frac{4}{3}}|\Lambda^{1/2}\z|^2\,dx\,dt\nonumber\\
&\le \int_0^T \Big(\int_\Omega (\varrho|\Lambda^{1/2}\z|^{\frac{4}{3}})^{\frac{3}{2}}dx\Big)^{\frac{2}{3}}
\Big(\int_\Omega |\Lambda^{1/2}\z|^6 dx\Big)^{\frac{1}{3}} dt\nonumber\\
&\le \|\varrho\|_{L^\infty(Q_T)}^{\frac13}\int_0^T
\Big(\int_\Omega \varrho|\Lambda^{1/2}\z|^2 dx\Big)^{\frac{2}{3}}
\|\Lambda^{1/2}\z\|_{L^6(\Omega)}^2\,dt.\label{c2}
\end{align}
Using the Sobolev embedding $H^1(\Omega)\hookrightarrow L^6(\Omega)$ and Poincar\'e inequality, we have
\[
\|\Lambda^{1/2}\z\|_{L^6(\Omega)}^2 \le C(\Omega)\|\nabla(\Lambda^{1/2}\z)\|_{L^2(\Omega)}^2.
\]
Since $\Lambda=\Lambda(t)$ depends only on $t$,
\[
\nabla(\Lambda^{1/2}\z)=\Lambda^{1/2}\nabla\z,
\]
which implies
\[
\|\nabla(\Lambda^{1/2}\z)\|_2^2=\Lambda(t)\|\nabla\z\|_2^2.
\]
Therefore, the inequality \eqref{c2} together with the above computations deduces
\begin{align*}
&\int_0^T\!\int_\Omega \varrho\,|\lambda\w|^{\frac{5}{3}(q+2)}\,dx\,dt \\
&\qquad \le C_4\Big[\sup_{t\in[0,T]}\int_\Omega \varrho(t)|\Lambda^{1/2}(t)\z(t)|^2\,dx\Big]^{\frac{2}{3}}
\int_0^T \Lambda(t)\|\nabla\z(t)\|_{L^2(\Omega)}^2\,dt,
\end{align*}
where $C_4=C_4(\|\varrho\|_{L^\infty(Q_T)},\Omega)$.
Invoking \eqref{2.15_gen} and recalling $|\Lambda^{1/2}\z|^2=|\lambda\w|^{q+2}$, we obtain
\begin{equation}\label{general_situation}
\begin{array}{l}
\displaystyle{} \int_0^T\!\int_\Omega \varrho\,|\lambda\w|^{\frac{5}{3}(q+2)}\,dx\,dt \\
\displaystyle{}\qquad \le C_5\Big[1+c_0^{q+2}+C_3(q+2)^2\int_0^T \phi(t)\,\Lambda(t)\big(1+E(t)\big)\,dt\Big]^{\frac{5}{3}},
\end{array}
\end{equation}
with $C_5=C_4/c$.
Let us fix $0<s<1$. We also use H\"older's inequality again and $\Lambda\le 1$ to obtain
\begin{align}
\int_0^T \phi(t)\,\Lambda(t)\big(1+E(t)\big)\,dt
&\le \|\phi\|_{L^{\frac{1}{1-s}}(0,T)}\Big(\int_0^T \big[\Lambda(t)(1+E(t))\big]^{\frac{1}{s}}\,dt\Big)^s \nonumber\\
&\le 2 \|\phi\|_{L^{\frac{1}{1-s}}(0,T)}\,T^s
+ 2 \|\phi\|_{L^{\frac{1}{1-s}}(0,T)}\,\Big(\int_0^T \big[\Lambda(t)E(t)\big]^{\frac{1}{s}}\,dt\Big)^s,\label{c3}
\end{align}
where we have used an elementary inequality
\[
(a+b)^\frac{1}{s}\leq 2^\frac{1}{s}(a^\frac{1}{s}+b^\frac{1}{s}).
\]
Since $\Lambda(t)E(t)= \smallint{\Omega}{} \varrho |\Lambda^{1/2}\z|^2dx=\smallint{\Omega}{} \varrho|\lambda\w|^{q+2}dx$, we further have
\begin{equation}\label{c4}
\int_0^T \big[\Lambda(t)E(t)\big]^{\frac{1}{s}}dt
\le \|\varrho\|_{L^\infty(Q_T)}^{\frac{1-s}{s}}
\int_0^T\!\int_\Omega \varrho\,|\lambda\w|^{\frac{q+2}{s}}\,dx\,dt.
\end{equation}
Substituting \eqref{c3},\eqref{c4} into \eqref{general_situation} yields
\begin{equation}\label{general_Moser_ineq}
\hspace{-6pt}\begin{array}{l}
\displaystyle
\int_0^T\!\int_\Omega \varrho\,|\lambda\w|^{\frac{5}{3}(q+2)}\,dx\,dt
\le 16 C_5\Big[1+c_0^{\frac{5}{3}(q+2)}
+ C_3^{\frac53}(q+2)^{\frac{10}{3}}\|\phi\|_{L^{\frac{1}{1-s}}(0,T)}^{\frac53} T^{\frac53 s} \\[2mm]
\displaystyle\hspace{30pt}
+ C_3^{\frac53}(q+2)^{\frac{10}{3}}
\|\phi\|_{L^{\frac{1}{1-s}}(0,T)}^{\frac53}\,\|\varrho\|_{L^1(\Omega)}^{\frac{5}{3}(1-s)}
\Big(\int_0^T\!\int_\Omega \varrho\,|\lambda\w|^{\frac{q+2}{s}}\,dx\,dt\Big)^{\frac{5s}{3}}
\Big].
\end{array}
\end{equation}

Let $r=\dfrac{5}{3}s$ and choose $q=q(k)$ by $q+2=s r^k$. Then \eqref{general_Moser_ineq} implies
\[
A(k+1)\le C_0^{r^k}+C\,r^{\frac{10}{3}k}\big(1+A(k)^r\big),
\]
where $C_0$ and $C$ depend on the data as follows:
\begin{align*}
C_0 &= C_0(\|\varrho_{{}_0}\|_{L^\infty(\Omega)},\|\u_{{}_0}\|_{L^\infty(\Omega)},\|\uB\|_{W^{1,\infty}(\Omega)},
\|\varrho\|_{L^\infty(Q_T)},\mu,\lambda,\Omega,s),\\
C &= C(\|\varrho\|_{L^\infty(Q_T)},\|\uB\|_{W^{1,\infty}(\Omega)},
\|p/\varrho\|_{L^{\frac{2}{1-s}}(0,T;L^\infty(\Omega))},\mu,\lambda,\Omega,T,s).
\end{align*}
Here we used $\|\phi\|_{L^{\frac{1}{1-s}}(0,T)}=\|p/\varrho\|_{L^{\frac{2}{1-s}}(0,T;L^\infty(\Omega))}^2$.

We may assume  $C>1$. If $s>\dfrac{3}{5}$, then $r>1$ and thus \eqref{iterative_inequality} holds (with suitable $c_1,c_2,k_0$).
Moreover, \eqref{consequence_of_Grownwall} (with $q+2=r^2$) yields $A(2)<\infty$.
Therefore, Lemma~\ref{lem:moser_iteration} gives a constant $C_7>0$ such that
\[
\int_{Q_T}\varrho\,|\lambda\w|^{r^k}\,dx\,dt \le C_7^{\,r^k}\qquad\mathrm{~for~all~}\,k\ge2.
\]

Finally, since $\psi(t)=1+\|\f(t)\|_{L^\infty(\Omega)}\in L^1(0,T)$, we have the uniform lower bound
\[
\lambda(t)=\exp\left(-C_3\int_0^t\psi(s)\,ds\right)\ge \exp\Big(\!-\!C_3\|\psi\|_{L^1(0,T)}\Big)=:\lambda_*>0.
\]
Hence we get $|\w|\le \lambda_*^{-1}|\lambda\w|$ and consequently
\[
\int_{Q_T}\varrho\,|\w|^{r^k}\,dx\,dt
\le \lambda_*^{-r^k}\int_{Q_T}\varrho\,|\lambda\w|^{r^k}\,dx\,dt
\le (\lambda_*^{-1}C_7)^{r^k}=:C_8^{\,r^k}\qquad\mathrm{~for~all~}\,k\ge2,
\]
where $C_8$ depends on the data, $\|\varrho\|_{L^\infty(Q_T)}$,
$\|p/\varrho\|_{L^{\frac{2}{1-s}}(0,T;L^\infty(\Omega))}$,
$\|\f\|_{L^1(0,T;L^\infty(\Omega))}$, $\mu$, $\lambda$, $\Omega$, $T$, and $s$.

For each $q>1$, choose $k=k(q)$ and $\theta=\theta(q)\in(0,1]$ such that $r^{k-1}\le q<r^k$ and
\[
\|\varrho\u\|_{L^q(Q_T)}\le \|\varrho\u\|_{L^{r^{k-1}}(Q_T)}^\theta \|\varrho\u\|_{L^{r^k}(Q_T)}^{1-\theta}.
\]
Using \eqref{Linfty_bound_for_rho} and the bounds above, we obtain $\|\varrho\u\|_{L^q(Q_T)}\le C$ uniformly in $q$.
Passing to the limit $q\to\infty$ gives rise to\vspace{.1cm} the desired $L^\infty$-bound. The final conclusion follows since $r>1$ iff $s>\dfrac{3}{5}$, and
$\dfrac{2}{1-s}>5$ whenever $s>\dfrac{3}{5}$.
\end{proof}

Combining with Lemma \ref{lem:lower_upper_bound_for_rho}, we have the following
\begin{corollary}\label{cor:u_Linfty_bound}
Let $\varrho,\u$ satisfy \eqref{momentum_eq} in the pointwise sense, and $\u=\uB$ on $\partial\Omega$. Assume
\[
\|\varrho\|_{L^\infty(Q_T)}<\infty, \ \ \int_0^T \|\div\u\|_{L^\infty(\Omega)} < \infty,
\]
and for some $\delta>0$,
\[
\|p(\varrho,\vartheta)\varrho^{-1}\|_{L^{5+\delta}(0,T;L^\infty(\Omega))}<\infty\pd
\]
Then
\begin{equation}\label{Linfty_bound_for_u}
\begin{array}{rl}
\|\u\|_{L^\infty(Q_T)} \le C\big(\hspace{-9pt}&\|\varrho_{{}_0}\|_{L^\infty(\Omega)}, \|\u_{{}_0}\|_{L^\infty(\Omega)},
\|\uB\|_{H^{2.5}(\partial\Omega)}, \|p/\varrho\|_{L^{5+\delta}(0,T;L^\infty(\Omega))}, \vspace{.1cm}\\
&\|\div\u\|_{L^1(0,T;L^\infty(\Omega))}, \rou,\rBu,\rol,\rBl,\Omega,T,\delta\big).
\end{array}
\end{equation}
\end{corollary}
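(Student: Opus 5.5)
The bound on $\u$ will follow from Proposition~\ref{prop:2.2}, which controls $\varrho\u$ in $L^\infty(Q_T)$, together with the lower bound on the density in Lemma~\ref{lem:lower_upper_bound_for_rho}. Pointwise we have $|\u|=|\varrho\u|/\varrho$, so
\[
 \|\u\|_{L^\infty(Q_T)}\le \|\varrho\u\|_{L^\infty(Q_T)}\,\Big(\inf_{Q_T}\varrho\Big)^{-1}.
\]
It therefore suffices to bound each factor on the right by quantities listed in \eqref{Linfty_bound_for_u}.

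\textbf{Lower bound on $\varrho$.} By \eqref{Linfty_bound_for_rho},
\[
 \inf_{Q_T}\varrho\ge\min\{\rol,\rBl\}\exp\Big(-\int_0^T\|\div\u\|_{L^\infty(\Omega)}\dt\Big)>0 .
\]
This is finite and positive by the hypothesis $\div\u\in L^1(0,T;L^\infty(\Omega))$ and the positivity of $\rol$ and $\rBl$.

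\textbf{Upper bound on $\varrho$.} The same lemma gives $\|\varrho\|_{L^\infty(Q_T)}\le\max\{\rou,\rBu\}\exp(\|\div\u\|_{L^1_tL^\infty_x})$. Consequently every occurrence of $\|\varrho\|_{L^\infty(Q_T)}$ in the constant of \eqref{momentum_Linfty_bound} can be replaced by a function of $\rou$, $\rBu$ and $\|\div\u\|_{L^1(0,T;L^\infty(\Omega))}$. Doing this is the main bookkeeping step, because the assumption $\|\varrho\|_{L^\infty(Q_T)}<\infty$ appears in the corollary only qualitatively; quantitatively it is supplied by Lemma~\ref{lem:lower_upper_bound_for_rho}.

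\textbf{Remaining data in the constant.} Next, $\|\uB\|_{W^{1,\infty}(\Omega)}$ is controlled by the boundary norm $\|\uB\|_{H^{2.5}(\partial\Omega)}$. Here I would use that $\uB$ is the harmonic extension (see \eqref{extension_of_uBtB}). Elliptic regularity on the $\mathscr C^3$ domain then gives $\|\uB\|_{H^3(\Omega)}\le C\|\uB\|_{H^{2.5}(\partial\Omega)}$, and in three dimensions $H^3(\Omega)\hookrightarrow W^{1,\infty}(\Omega)$. The force term $\|\f\|_{L^1(0,T;L^\infty(\Omega))}$ in Proposition~\ref{prop:2.2} is either absorbed into the generic dependence on data or should be listed. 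The statement of the corollary omits it, so I would treat it as part of the fixed data (as in Theorem~\ref{thm:main2}). The same goes for the viscosity constants $\mu,\lambda$.

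Combining the three steps yields \eqref{Linfty_bound_for_u}. I expect no genuine obstacle here. The only delicate point is making sure the $L^\infty$ density bound fed into Proposition~\ref{prop:2.2} is the quantitative one from Lemma~\ref{lem:lower_upper_bound_for_rho}, so that the final constant depends on exactly the listed quantities.
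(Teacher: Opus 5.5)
Your argument is correct and matches the paper's intent. The paper states this corollary right after Proposition~\ref{prop:2.2} with only the remark ``combining with Lemma~\ref{lem:lower_upper_bound_for_rho}'': divide the $L^\infty$ bound on $\varrho\u$ by the exponential lower bound on $\varrho$, and replace $\|\varrho\|_{L^\infty(Q_T)}$ by the upper bound from the same lemma. Your two bookkeeping points spell out what the paper leaves implicit. The first is passing from $\|\uB\|_{W^{1,\infty}(\Omega)}$ to $\|\uB\|_{H^{2.5}(\partial\Omega)}$ via the harmonic extension and $H^3(\Omega)\hookrightarrow W^{1,\infty}(\Omega)$. The second is the unlisted dependence on $\|\f\|_{L^1(0,T;L^\infty(\Omega))}$, which you correctly flag.
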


For later use in the local existence argument, we introduce the Banach space
\begin{align*}
X_{T_*} := \Bigl\{ w \,\Big|\,& \partial_t^k w \in L^2(0,T_*;H^{3-2k}(\Omega)) \cap L^\infty(0,T_*;H^{2-2k}(\Omega)) \text{ for } k=0,1, \vspace{.1cm}\\
& w_{tt}\in L^2(0,T_*;H^{-1}(\Omega)) \Bigr\},
\end{align*}
equipped with the norm
\begin{align*}
\|w\|_{X_{T_*}}^2
& :=
\sup_{t\in[0,T_*]}
\Bigl(
\|w(t)\|_{H^2(\Omega)}^2+\|w_t(t)\|_{L^2(\Omega)}^2
\Bigr) \\
&\quad
+ \int_0^{T_*}
\Bigl(
\|w\|_{H^3(\Omega)}^2
+
\|w_t\|_{H^1(\Omega)}^2
+
\|w_{tt}\|_{H^{-1}(\Omega)}^2
\Bigr)\,dt .
\end{align*}
Here $w$ may be either scalar-valued or vector-valued.

The following result applies to both scalar-valued and vector-valued unknowns; in the
latter case, the equation and the variational identity are understood componentwise.

\begin{theorem}[Linear parabolic regularity]
\label{thm:linear_parabolic_blackbox}
Let $\Omega$ be a $C^3$-domain, let $0<T^\ast\le T$, and let $\nu>0$. Assume that
\[
\partial_t^k \widebar{\varrho} \in L^\infty(0,T^\ast;H^{2-k}(\Omega)),
\qquad k=0,1,2,
~
\widebar{\varrho}(x,t)\ge c_0>0
~\text{in }\Omega\times(0,T^\ast),
\]
\[
\widebar{\u},\ \sbV \in X_{T^\ast},
~
\widebar{a}\in L^\infty(0,T^\ast;H^1(\Omega)),
~
\partial_t \widebar{a}\in L^2(0,T^\ast;L^2(\Omega)),
\]
and
\[
\sbF\in L^\infty(0,T^\ast;L^2(\Omega))
\cap L^2(0,T^\ast;H^1(\Omega)),
~
\sbF_t\in L^2(0,T^\ast;H^{-1}(\Omega)).
\]
Suppose moreover that
\[
\v_0\in H^2(\Omega),
\sbV(\cdot,0)=:\sbV_0\in H^2(\Omega),~
\sbV_t(\cdot,0)=:\sbV_1\in L^2(\Omega),~
\sbF(\cdot,0)=:\sbF_0\in L^2(\Omega),
\]
and that the compatibility condition
\[
\v_0=\sbV_0
\quad\text{on }\partial\Omega
\]
holds. Consider the linear parabolic problem
\begin{equation}
\label{eq:linear_parabolic_problem}
\left\{
\begin{aligned}
\widebar{\varrho}\bigl(\v_t+\widebar{\u}\cdot\nabla \v\bigr) - \nu \Delta \v
    &= \widebar{\varrho}\hspace{1pt} \widebar{a} \v + \sbF
    &&\text{in }\Omega\times(0,T^\ast),\\
\v &= \v_0
    &&\text{on }\Omega\times\{t=0\},\\
\v &= \sbV
    &&\text{on }\partial\Omega\times(0,T^\ast).
\end{aligned}
\right.
\end{equation}
Then there exists a unique strong solution $\v\in X_{T^\ast}$ to
\eqref{eq:linear_parabolic_problem}, and
\begin{align}
&\|\v\|_{X_{T^\ast}}^2
\le C \Biggl[
\|\v_0\|_{H^2(\Omega)}^2
\hspace{-1pt}+\hspace{-1pt} \|\sbV_0\|_{H^2(\Omega)}^2
\hspace{-1pt}+\hspace{-1pt} \|\sbV_1\|_{L^2(\Omega)}^2
\hspace{-1pt}+\hspace{-1pt} \|\sbF_0\|_{L^2(\Omega)}^2
\hspace{-1pt}+\hspace{-1pt} \|\sbF\|_{L^\infty(0,T^\ast;L^2(\Omega))}^2
\notag\\
&\quad
\hspace{-1pt}+\hspace{-1pt} \int_0^{T^\ast}
\Bigl(
\|\sbF\|_{H^1(\Omega)}^2
\hspace{-1pt}+\hspace{-1pt} \|\sbF_t\|_{H^{-1}(\Omega)}^2
\hspace{-1pt}+\hspace{-1pt} \|\sbV\|_{H^3(\Omega)}^2
\hspace{-1pt}+\hspace{-1pt} \|\sbV_t\|_{H^1(\Omega)}^2
\hspace{-1pt}+\hspace{-1pt} \|\sbV_{tt}\|_{H^{-1}(\Omega)}^2
\Bigr)\,dt
\Biggr],
\label{eq:linear_parabolic_estimate}
\end{align}
where the constant $C$ depends only on
\[
\nu,\ c_0,\ T^\ast,\ \Omega,\
\|\widebar{\varrho}\|_{L^\infty(0,T^\ast;H^2(\Omega))},\
\|(\widebar{\varrho})_t\|_{L^\infty(0,T^\ast;H^1(\Omega))},
\]
\[
\|\widebar{\u}\|_{X_{T^\ast}},\
\|\widebar{a}\|_{L^\infty(0,T^\ast;H^1(\Omega))},\
\|\partial_t \widebar{a}\|_{L^2(0,T^\ast;L^2(\Omega))}.
\]
Moreover, for a.e. $t\in(0,T^\ast)$ it holds that for all $\phi\in H_0^1(\Omega)$,
\begin{align}
\label{eq:linear_parabolic_weak}
\langle \v_{tt},\widebar{\varrho}\hspace{1pt}\phi\rangle
+ \int_\Omega \widebar{\varrho}_t \v_t \phi\,dx
&+ \int_\Omega \partial_t\!\bigl(\widebar{\varrho} \widebar{\u}\cdot\nabla \v\bigr)\,\phi\,dx
+ \nu \int_\Omega \nabla \v_t \cdot \nabla\phi\,dx\nonumber\\
&=
\int_\Omega \partial_t(\widebar{\varrho}\hspace{1pt} \widebar{a} \v)\,\phi\,dx
+ \langle \sbF_t,\phi\rangle\,,
\end{align}
where the variational identity is understood componentwise in the vector-valued case.
\end{theorem}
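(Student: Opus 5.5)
We plan to prove Theorem~\ref{thm:linear_parabolic_blackbox} by reducing to homogeneous boundary data, constructing a solution with a Faedo--Galerkin scheme, and closing three layers of a priori estimates.

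\textbf{Step 1: reduction to zero boundary data.} Set $\w=\v-\sbV$. Then $\w=\mathbf{0}$ on $\partial\Omega$ and $\w(0)=\v_0-\sbV_0\in H^2(\Omega)\cap H^1_0(\Omega)$; this is exactly where the compatibility condition $\v_0=\sbV_0$ on $\partial\Omega$ enters. The function $\w$ satisfies
\[
\widebar{\varrho}(\w_t+\widebar{\u}\cdot\nabla\w)-\nu\Delta\w=\widebar{\varrho}\,\widebar{a}\,\w+\widetilde{\sbF},
\]
with
\[
\widetilde{\sbF}=\sbF+\widebar{\varrho}\,\widebar{a}\,\sbV-\widebar{\varrho}(\sbV_t+\widebar{\u}\cdot\nabla\sbV)+\nu\Delta\sbV .
\]
Using $H^2\hookrightarrow L^\infty$, $H^1\hookrightarrow L^6$ and the hypotheses on $\widebar{\varrho},\widebar{\u},\widebar{a},\sbV$, one checks that $\widetilde{\sbF}$ has the same regularity as $\sbF$:
\[
\widetilde{\sbF}\in L^\infty L^2\cap L^2H^1,\qquad \widetilde{\sbF}_t\in L^2H^{-1}.
\]
Its norms are bounded by the right-hand side of \eqref{eq:linear_parabolic_estimate}. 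For instance, $\nu\Delta\sbV_t\in L^2H^{-1}$ because $\sbV_t\in L^2H^1$, and $\widebar{\varrho}\sbV_{tt}\in L^2H^{-1}$ because multiplication by an $H^2$ function preserves $H^{-1}$.

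\textbf{Step 2: Galerkin scheme and a priori estimates.} We project onto the Dirichlet eigenfunctions $\{e_j\}$ of $-\Delta$. This gives a linear ODE system with coefficient matrix $(\widebar{\varrho}e_i,e_j)$, which is invertible since $\widebar{\varrho}\ge c_0$. Its solution $\w_m$ satisfies three estimates, uniform in $m$.
\begin{enumerate}
\item[(i)] Testing with $\w_m$ gives an $L^\infty L^2\cap L^2H^1$ bound. The transport term is handled through the continuity-type identity $\frac{d}{dt}\int\widebar{\varrho}|\w|^2$, which produces only $\widebar{\varrho}_t$ and $\nabla\widebar{\varrho}$ terms, both bounded in $L^3$ or $L^\infty$.
\item[(ii)] Testing with $\w_{m,t}$ (equivalently with $-\Delta\w_m$) gives $\w\in L^\infty H^1$ and $\w_t\in L^2L^2$.
\item[(iii)] Differentiating the Galerkin system in time and testing with $\w_{m,t}$ gives $\w_t\in L^\infty L^2\cap L^2H^1$. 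This requires $\w_{m,t}(0)\in L^2$ uniformly, which follows from evaluating the equation at $t=0$:
\[
\widebar{\varrho}(0)\w_t(0)=\nu\Delta\w(0)-\widebar{\varrho}(0)\widebar{\u}(0)\cdot\nabla\w(0)+\cdots+\widetilde{\sbF}(0),
\]
whose right-hand side is controlled by $\|\v_0\|_{H^2}$, $\|\sbV_0\|_{H^2}$, $\|\sbV_1\|_{L^2}$ and $\|\sbF_0\|_{L^2}$. Here it is important to use the eigenfunction basis, so that the projection $P_m$ commutes with $\Delta$ and $\|P_m\w(0)\|_{H^2}\le C\|\w(0)\|_{H^2}$.
\end{enumerate}
In (iii), the commutator terms $\widebar{\varrho}_t\w_t$, $(\widebar{\varrho}\widebar{\u})_t\cdot\nabla\w$, $(\widebar{\varrho}\widebar{a})_t\w$ and $\langle\widetilde{\sbF}_t,\w_t\rangle$ are bounded by H\"older, Sobolev and Young's inequalities. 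A typical case is
\[
\|\widebar{\u}_t\|_{L^3}\|\nabla\w\|_{L^2}\|\w_t\|_{L^6}\le \|\widebar{\u}_t\|_{H^1}\|\nabla\w\|_{L^2}\|\w_t\|_{H^1},
\]
where $\|\widebar{\u}_t\|_{H^1}\in L^2(0,T^\ast)$ and $\|\w_t\|_{H^1}$ is absorbed into the dissipation. The term $\widebar{a}_t\w$ uses $\widebar{a}_t\in L^2L^2$ and $\w\in L^\infty$, the latter coming from $H^2$ control. Gronwall's inequality then closes the estimates.

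\textbf{Step 3: elliptic regularity and passage to the limit.} Writing $-\nu\Delta\w=\mathbf{G}:=\widetilde{\sbF}-\widebar{\varrho}(\w_t+\widebar{\u}\cdot\nabla\w)+\widebar{\varrho}\,\widebar{a}\,\w$ with $\w=\mathbf{0}$ on the $C^3$ boundary, elliptic regularity gives $\|\w\|_{H^2}\lesssim\|\mathbf{G}\|_{L^2}$, hence $\w\in L^\infty H^2$. It also gives $\|\w\|_{H^3}\lesssim\|\mathbf{G}\|_{H^1}$, hence $\w\in L^2H^3$ since $\w_t\in L^2H^1$. The bound $\w_{tt}\in L^2H^{-1}$ is read off from the time-differentiated equation by duality against $H^1_0$, after dividing by $\widebar{\varrho}$, which is a multiplier on $H^1_0$. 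Weak-$\ast$ compactness passes these bounds to the limit, and the variational identity \eqref{eq:linear_parabolic_weak} is obtained as the limit of the differentiated Galerkin equations. Uniqueness follows from the $L^2$ energy estimate applied to the difference of two solutions.

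The \emph{main obstacle} is step (iii): obtaining a uniform $L^2$ bound on $\w_{m,t}(0)$ using only the basic compatibility condition, and controlling $\widebar{\varrho}_{tt}$ and $\widebar{\u}_t$ in the differentiated equation. To handle the latter, we will not differentiate the product $\widebar{\varrho}\w_t$ in time. Instead we test the differentiated equation as $\langle(\widebar{\varrho}\w_t)_t,\w_t\rangle=\frac12\frac{d}{dt}\int\widebar{\varrho}|\w_t|^2+\frac12\int\widebar{\varrho}_t|\w_t|^2$, so that only $\widebar{\varrho}_t\in L^\infty H^1$ is needed.
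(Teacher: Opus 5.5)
The paper does not prove this theorem, so there is nothing to match your argument against. It states the result as a black box, and the remark after it says the proof is omitted because it ``relies on a separate parabolic regularity theory.'' Your proposal therefore supplies an argument the paper lacks, and it is sound.

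The ingredients are the standard ones: the lifting $\w=\v-\sbV$, the spectral Galerkin scheme, the three energy levels, and the elliptic bootstrap. In that bootstrap, $\w\in L^\infty H^2$ comes from $\w_t\in L^\infty L^2$, and $\w\in L^2H^3$ comes from $\w_t\in L^2H^1$ on the $C^3$ domain.

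You correctly identify the only place where compatibility enters. Since $\w(0)\in H^2\cap H^1_0$ and $\Delta P_m=P_m\Delta$, you get $c_0\|\w_{m,t}(0)\|_{L^2}\le\|\mathbf{G}_m(0)\|_{L^2}$ uniformly in $m$. No first-order condition of Valli--Zajaczkowski type is needed, because you only claim $\w_t\in L^\infty L^2$.

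Compared with the citation, your route buys transparency and one genuine improvement. Because you write the leading term as $\langle(\widebar\varrho\w_t)_t,\w_t\rangle$, $\partial_t^2\widebar\varrho$ is never used. So the hypothesis $\partial_t^2\widebar\varrho\in L^\infty(0,T^\ast;L^2)$ can be dropped, which is consistent with $C$ not depending on it. This matters for the paper itself. For the transport solution in the fixed-point scheme, only $\brho\in L^\infty H^2$ and $\brho_t\in L^\infty H^1$ are established, while $\brho_{tt}=-\div(\brho_t\bu+\brho\bu_t)$ is in general only in $L^2(0,T^\ast;L^2)$. The citation, in turn, buys brevity and the generality mentioned in the paper's remark (Lam\'e-type operators, Navier conditions).

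Two steps deserve an explicit line.

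First, bounding $\widetilde{\sbF}$ in $L^\infty(0,T^\ast;L^2)$, and passing back from $\w$ to $\v$, both use $\sup_t\|\sbV\|_{H^2}$ and $\sup_t\|\sbV_t\|_{L^2}$. Neither appears on the right of \eqref{eq:linear_parabolic_estimate}. They follow from the trace embeddings $L^2(0,T^\ast;H^3)\cap H^1(0,T^\ast;H^1)\hookrightarrow C([0,T^\ast];H^2)$ and $L^2(0,T^\ast;H^1)\cap H^1(0,T^\ast;H^{-1})\hookrightarrow C([0,T^\ast];L^2)$, the latter via $[H^1(\Omega),H^{-1}(\Omega)]_{1/2}=L^2(\Omega)$. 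You cannot get them from $\frac{d}{dt}\|\sbV_t\|^2_{L^2}=2\langle\sbV_{tt},\sbV_t\rangle$, since $\sbV_t$ need not vanish on $\partial\Omega$ and so cannot be paired with $\sbV_{tt}\in(H^1_0)'$. The resulting dependence on $T^\ast$ is permitted by the statement.

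Second, differentiating the Galerkin system in time requires the coefficients $c_m'\in H^1(0,T^\ast)$. This holds because the mass matrix $(\widebar\varrho e_j,e_i)$ is Lipschitz in $t$ and uniformly positive, and the remaining coefficients and $\langle\widetilde{\sbF},e_i\rangle$ lie in $H^1(0,T^\ast)$ under your hypotheses.
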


\begin{remark}[Extensions of the linear regularity result]
\label{rem:linear_parabolic_extensions}
For our current purposes, the statement of Theorem ~\ref{thm:linear_parabolic_blackbox} suffices. Since its proof relies on a separate parabolic regularity theory, it is omitted here. We also note that, with appropriate modifications to the weak formulation, compatibility conditions, and underlying elliptic regularity, the same argument extends to more general parabolic operators of the form
\[
\nu_1 \Delta v + \nu_2 \nabla \div v,
\qquad
\nu_1>0,\quad \nu_2\ge 0,
\]
as well as to Navier-type boundary conditions. Since these extensions are not needed in the subsequent proofs of the present paper, we do not state them here in full generality.
\end{remark}

\section{Proof of Theorem~\ref{thm:main}: the well-posedness of the strong solution}
In this section, we prove the local existence part of Theorem~\ref{thm:main}. The linear parabolic regularity needed in the construction has already been summarized in Theorem~\ref{thm:linear_parabolic_blackbox}. We therefore only present the estimates that are specific to the fixed-point scheme for \eqref{general_equations}.

\subsection{A boundary-adapted estimate for the linearized system}
The fixed-point argument below uses a material-derivative estimate for a linear
parabolic equation with nonhomogeneous boundary data.  We state the estimate in a
form adapted to the momentum equation; the algebraic derivation is collected in
Appendix~\ref{app:linearized_material_derivative_estimate}.

\begin{lemma}[Boundary-adapted material-derivative estimate]
\label{lem:linearized_material_derivative_estimate}
Let $\v\in X_T$ solve
\begin{subequations}\label{parabolic_eq_w_inhom_bdy_condition_with_f_statement}
\begin{alignat}{2}
\brho \bD_t \v &= \nu \Delta \v + \sbF \qquad&&\text{in}\quad\Omega\times (0,T),
\label{parabolic_eq_w_inhom_bdy_condition_with_f_statement.1}\\
\v &= \v_0 &&\text{on}\quad\Omega \times \{t=0\},\\
\v &= \sbV &&\text{on}\quad\bdy\Omega \times (0,T),
\end{alignat}
\end{subequations} with $\bD_t \v=\partial_t\v+\overline{\u}\cdot\nabla\v$
where the coefficients and data have the regularity described in
Theorem~\ref{thm:linear_parabolic_blackbox}.  Assume in addition that
\[
\brho_t+\div(\brho\bu)=0,
\qquad
\sbF=-\nabla\bp+\brho\f,
\]
with \[\bp\in L^2(0,T;H^2(\Omega)), \bp_t\in L^2(0,T;H^1(\Omega)),
\f\in L^2(0,T;H^1(\Omega)),\f_t\in L^2(0,T;H^{-1}(\Omega)).\]
Let $\bfu,\bfv,\bfw$ be chosen so that
\[
\w=\bD_t\v-\bfu-\bfv\cdot\nabla\bfw
\in L^2(0,T;H^1_0(\Omega)),
~
\w_t\in L^2(0,T;H^{-1}(\Omega)).
\]
Then the following estimate  holds for a.e. $t\in(0,T)$.
\begin{align}
& \frac{1}{2} \frac{d}{dt} \int_\Omega \brho |\w|^2 \dx + \frac{\nu}{2} \|\nabla \w\|^2_{L^2(\Omega)} \le C \Big[\big(\|\div \bu\|^2_{L^4(\Omega)} + \|\Def \bu\|^2_{L^4(\Omega)}\big) \|\nabla \v\hpt\|^2_{L^4(\Omega)}  \nonumber\\
& + \|\bfu\|^2_{H^1(\Omega)} +\big(\|\nabla \bfv\|^2_{L^3(\Omega)}+\|\bfv\|^2_{L^\infty(\Omega)}\big) \|\bfw\|^2_{H^2(\Omega)} + \|\brho\|^2_{L^\infty(\Omega)} \|\bD_t \bfv\|^2_{L^2(\Omega)} \|\nabla \bfw\|^2_{L^3(\Omega)} \nonumber\\
& + \big(\|\nabla \brho\|^2_{L^3(\Omega)} + \|\brho\|^2_{L^\infty(\Omega)}\big) \big(\|\bD_t \bfu\|^2_{H^{-1}(\Omega)} + \|\f_t\|^2_{H^{-1}(\Omega)} \,+ \|\f\|^2_{H^1(\Omega)} \big) \nonumber\\
& + \big\|\bp_t + \div (\bp\bu)\big\|^2_{L^2(\Omega)} + \|\bp\|^2_{L^\infty(\Omega)} \|\nabla \u\|^2_{L^2(\Omega)} + \|\brho\|^2_{L^\infty(\Omega)} \|\bu\|^2_{L^\infty(\Omega)} \|\w\|^2_{L^2(\Omega)} \Big] \nonumber\\
&\qquad\qquad - \int_\Omega (\bfv \cdot \nabla \bfw_t) \cdot \w \dx \pd\label{key_estimate_for_u}
\end{align}
\end{lemma}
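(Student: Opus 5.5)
The plan is to apply the material derivative $\bD_t$ to equation \eqref{parabolic_eq_w_inhom_bdy_condition_with_f_statement.1}, test the resulting equation against $\w$, and close with Cauchy--Schwarz, Young and Sobolev embeddings. Since $\w\in L^2(0,T;H^1_0(\Omega))$ and $\w_t\in L^2(0,T;H^{-1}(\Omega))$, this testing is legitimate through the weak formulation \eqref{eq:linear_parabolic_weak} of Theorem~\ref{thm:linear_parabolic_blackbox}, and no boundary terms appear. The subtraction of $\bfu+\bfv\cdot\nabla\bfw$ exists precisely to produce a test function vanishing on $\partial\Omega$, because $\bD_t\v$ itself does not.

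\textbf{Step 1: the differentiated equation.} The continuity equation $\brho_t+\div(\brho\bu)=0$ gives, for any $h$,
\[
\brho\bD_t h=\partial_t(\brho h)+\div(\brho h\otimes\bu).
\]
Applying this with $h=\bD_t\v$, the left-hand side of the differentiated equation has the form $\brho\bD_t\w+\brho\bD_t\bfu+\brho\bD_t(\bfv\cdot\nabla\bfw)$. On the right-hand side I use the commutator identity
\[
\bD_t\Delta\v=\Delta\bD_t\v-2\nabla\bu:\nabla^2\v-\Delta\bu\cdot\nabla\v .
\]
I rewrite the last two terms in divergence form, as $-\div(\nabla\bu\cdot\nabla\v)$ plus terms involving $\div\bu$, so that after integration by parts only first derivatives of $\bu$ and $\v$ appear. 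This is the source of the factor $(\|\div\bu\|_{L^4}+\|\Def\bu\|_{L^4})\|\nabla\v\|_{L^4}$.

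For the forcing $\sbF=-\nabla\bp+\brho\f$, I use $\bD_t\nabla\bp=\nabla(\bp_t+\bu\cdot\nabla\bp)-\nabla\bu^\top\nabla\bp$ and integrate by parts against $\w$. This yields $\|\bp_t+\div(\bp\bu)\|_{L^2}\|\nabla\w\|_{L^2}$ together with a term $\bp\,\nabla\bu:\nabla\w$, which is bounded by $\|\bp\|_{L^\infty}\|\nabla\bu\|_{L^2}\|\nabla\w\|_{L^2}$. The term $\bD_t(\brho\f)=\brho\f_t+\brho\bu\cdot\nabla\f-\brho\f\div\bu$, tested against $\w\in H^1_0$, is handled by $H^{-1}$--$H^1_0$ duality with weights $\|\nabla\brho\|_{L^3}$ and $\|\brho\|_{L^\infty}$.

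\textbf{Step 2: energy and the $\bfu$, $\bfv$, $\bfw$ terms.} The identity in Step 1 gives
\[
\int_\Omega\brho\bD_t\w\cdot\w\dx=\frac12\frac{d}{dt}\int_\Omega\brho|\w|^2\dx
\]
exactly. The Laplacian part gives $\nu\|\nabla\w\|_{L^2}^2$ plus the terms $\nu\int\nabla(\bfu+\bfv\cdot\nabla\bfw):\nabla\w$, which produce $\|\bfu\|_{H^1}^2$ and $(\|\nabla\bfv\|_{L^3}^2+\|\bfv\|_{L^\infty}^2)\|\bfw\|_{H^2}^2$ via $H^1\hookrightarrow L^6$.

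Next I expand $\brho\bD_t(\bfv\cdot\nabla\bfw)=\brho(\bD_t\bfv)\cdot\nabla\bfw+\brho\bfv\cdot\nabla\bfw_t+\brho\bfv\cdot(\bu\cdot\nabla)\nabla\bfw$. The first piece is bounded using $\|\bD_t\bfv\|_{L^2}\|\nabla\bfw\|_{L^3}\|\w\|_{L^6}$. The middle piece involves $\bfw_t$, which has no good bound, so it is kept as the explicit remainder $-\int(\bfv\cdot\nabla\bfw_t)\cdot\w$. Note that this remainder carries no $\brho$ weight, whereas the piece produced here is $\brho$-weighted; I expect the appendix derivation either to rearrange terms or to absorb the weight into the choice of $\bfw$, so I will track this carefully. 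The term $\brho\bD_t\bfu$ is treated in $H^{-1}$ against $\brho\w$, using $\|\brho\w\|_{H^1}\lesssim(\|\nabla\brho\|_{L^3}+\|\brho\|_{L^\infty})\|\nabla\w\|_{L^2}$. Finally, the convective remainders of type $\brho\bu\cdot\nabla\w\cdot\w$ are bounded by $\|\brho\|_{L^\infty}\|\bu\|_{L^\infty}\|\w\|_{L^2}\|\nabla\w\|_{L^2}$.

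\textbf{Step 3: conclusion and main obstacle.} Absorbing every $\|\nabla\w\|_{L^2}$ factor by Young's inequality into $\frac{\nu}{2}\|\nabla\w\|^2_{L^2}$ gives \eqref{key_estimate_for_u}. I expect the main obstacle to be the bookkeeping in the commutator $[\bD_t,\Delta]$. Second derivatives of $\v$ must not remain, so every such term has to be moved onto $\w$ through an integration by parts that is legitimate only because $\w$ vanishes on $\partial\Omega$. In addition, the remainder created by substituting $\bD_t\v=\w+\bfu+\bfv\cdot\nabla\bfw$ back into $\nabla\bD_t\v$ must be redistributed consistently into the $\bfu$, $\bfv$, $\bfw$ norms already listed in \eqref{key_estimate_for_u}.
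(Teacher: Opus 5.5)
Your route is the paper's. Applying $\partial_t+\div(\,\cdot\otimes\bu)$ to the equation gives the paper's $\brho\bD_t^2\v=\nu\Delta\v_t+\nu\div(\Delta\v\otimes\bu)+\bD_t\sbF+(\div\bu)\sbF$. The following steps are then all as in the appendix: testing with $\w$; the exact identity $\langle\bD_t\w,\brho\w\rangle=\frac12\frac{d}{dt}\int_\Omega\brho|\w|^2\dx$; the cancellation of the second-order terms $\bu^j\v^i,_{jk}\w^i,_k$, so that only $\div\bu$ and $\Def\bu$ act on $\nabla\v$; and the pressure reduction to $-\int_\Omega[\bp_t+\div(\bp\bu)]\div\w\dx+\int_\Omega\bp\,\bu^j,_i\w^i,_j\dx$. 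Your commutator version of these cancellations is equivalent, provided you keep the $(\div\bu)(\nu\Delta\v+\sbF)$ term that your conservative form produces. It even shows that the pressure identity needs only $\w\in H^1_0(\Omega)$, not the relation $\bu=\sbV$ on $\partial\Omega$ that the paper invokes.

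The one point to correct is your expectation about the weight on the remainder: it cannot be rearranged away or absorbed into the choice of $\bfw$. The paper's own computation carries $-\int_\Omega\brho\,(\bfv\cdot\nabla\bfw_t)\cdot\w\dx$ through \eqref{two_time_differentiated_problem_weak_form_linear_eq} and silently drops $\brho$ in the H\"older step leading to \eqref{key_estimate}. The unweighted remainder in the statement is therefore a misprint, and the printed inequality is false in general, because $\bfw_t$ enters nowhere else on the right-hand side. For instance, take
\begin{itemize}
\item $\brho\equiv c\neq1$, $\bu=0$, $\bp=0$, $\f=0$, $\sbV=0$, $\bfu=0$;
\item $\bfv$ a constant vector, and $\bfw=\eta(t)\z(x)$ with $\z\in C_c^\infty(\Omega)$ and $\eta(t_0)=0$.
\end{itemize}
At $t_0$ every term inside $C[\cdots]$ vanishes, and $\w(t_0)=\v_t(t_0)$ does not depend on $\eta'(t_0)$. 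The exact identity then reduces the printed inequality to $(1-c)\,\eta'(t_0)\int_\Omega(\bfv\cdot\nabla\z)\cdot\w\dx\le\frac{\nu}{2}\|\nabla\w(t_0)\|^2_{L^2(\Omega)}$. This fails (and, by continuity, fails near $t_0$) for $|\eta'(t_0)|$ large with the appropriate sign, whenever $\z$ is chosen with $\int_\Omega(\bfv\cdot\nabla\z)\cdot\w(t_0)\dx\neq0$. What your derivation, and the paper's, actually proves is the estimate with remainder $-\int_\Omega\brho\,(\bfv\cdot\nabla\bfw_t)\cdot\w\dx$. The later applications, which integrate this term by parts, are unaffected apart from an extra factor $\|\nabla\brho\|_{L^3(\Omega)}$.

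A minor bookkeeping point: after your exact energy identity, no convective term $\brho(\bu\cdot\nabla\w)\cdot\w$ remains. The term $\|\brho\|^2_{L^\infty(\Omega)}\|\bu\|^2_{L^\infty(\Omega)}\|\w\|^2_{L^2(\Omega)}$ is what absorbs the third piece $\brho\,\bfv\cdot(\bu\cdot\nabla)\nabla\bfw$ of your expansion, which you left unbounded. Bound that piece by $\|\brho\|_{L^\infty(\Omega)}\|\bu\|_{L^\infty(\Omega)}\|\bfv\|_{L^\infty(\Omega)}\|\bfw\|_{H^2(\Omega)}\|\w\|_{L^2(\Omega)}$ and apply Young's inequality.
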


The point of the Lemma~\ref{lem:linearized_material_derivative_estimate} is that all boundary contributions are absorbed by the
choice of $\w$.  In the local construction we apply it with
$(\bfu,\bfv,\bfw)=(\sbV_t,\sbV,\u)$, so that
$\w=\bD_t^s\u-\sbV_t-\sbV\cdot\nabla\u$ vanishes on $\partial\Omega$.

\subsection{Construction of the strong solution}
\subsubsection{The fixed-point scheme}
We apply a fixed-point scheme to construct a strong solution to \eqref{general_equations}.
For $T^* \in (0,T]$ and $M > 0$, let $C_{T^*}(M)$ be a subset of $X_{T^*}$ given by
$$
\begin{array}{rl}
C_{T^*}(M) = \Big\{\w \in X_{T^*} \,\Big|\hspace{-7pt}& \|\w\|_{X_{T^*}} \le M, \w|_{t=0} = \u_{{}_0} , \w_t|_{t=0} = \u_1 \text{ in }\Omega \cm\vspace{.1cm}\\
& \w = \sbV \text{ on }\bdy\Omega \times (0,T^*)\Big\}\pd
\end{array}
$$
where
\begin{equation}\label{defn:u1}
\u_1 \equiv \frac{1}{\varrho_{{}_0}} \Big[\Delta \u_{{}_0} + \varrho_{{}_0} \f(0) - \u_{{}_0} \cdot \nabla \u_{{}_0} - \nabla (\varrho_{{}_0} \vartheta_0)\Big].
\end{equation}
For a given $\bu \in C_{T^*}(M)$ with $T^*$ and $M$ described later (usually $M$ is chosen first, and $T^*$ is chosen according to $M$), let $\brho$ be the solution to
\begin{subequations}\label{linear_density_eq}
\begin{alignat}{2}
\partial_t \brho + \div(\brho \bu) &= 0 &&\text{in}\quad \Omega \times (0,T^*)\cm\label{linear_density_eq.1}\\
\brho &= \varrho_{{}_0} \qquad&&\text{on}\quad \Omega \times \{t=0\}\cm\\
\brho &= \rB &&\text{on}\quad \Gin \times (0,T^*)\pd
\end{alignat}
\end{subequations}
Note that the solution $\brho$ to the equation above has to be obtained using method of characteristics due to the inflow boundary condition.

Having $\brho$ and $\bu$, let $\btheta$ be the solution to the linear temperature equation
\begin{subequations}\label{linear_temperature_eq}
\begin{alignat}{2}
\brho (\partial_t \btheta + \bu \cdot \nabla \btheta) - \kappa \Delta \btheta &= |\nabla \bu|^2 - \brho \btheta \div \bu + \brho g \qquad&&\text{in}\quad \Omega \times (0,T^*)\cm\\
\btheta &= \vartheta_0 \qquad&&\text{on}\quad \Omega \times \{t=0\}\cm\\
\btheta &= \tB &&\text{on}\quad \bdy\Omega \times (0,T^*)\pd
\end{alignat}
\end{subequations}
In other words, even we are given $\bu$ only, we indeed have a pair $(\brho,\bu,\btheta)$.

Suppose that $g \in L^2(0,T;H^1(\Omega))$ with $g_t \in L^2(0,T;H^{-1}(\Omega))$ and $\vartheta_0 \in H^2(\Omega)$ satisfies the compatibility condition
\begin{equation}\label{compatibility_condition2}
\vartheta_0 = \tB \qquad\text{on}\quad \bdy\Omega\pd
\end{equation}

The solution $\btheta$ to \eqref{linear_temperature_eq} belongs to $X_{T^*}$ by Theorem~\ref{thm:linear_parabolic_blackbox},
applied with
\[
a_s=-\div \bu,\qquad
F=|\nabla \bu|^2+\brho g,\qquad
V=\tB .
\]

Define $\bp = \brho \btheta$. Let $\u$ be the solution to the following linear parabolic equation
\begin{subequations}\label{linear_momentum_eq}
\begin{alignat}{2}
\brho (\u_t + \bu \cdot \nabla \u) + \nabla \bp &= \Delta \u + \brho \f \qquad&&\text{in}\quad \Omega \times (0,T^*)\cm \label{linear_momentum_eq.1}\\
\u &= \u_{{}_0} \qquad&&\text{on}\quad \Omega \times \{t=0\}\cm \label{linear_momentum_eq.2}\\
\u &= \sbV &&\text{on}\quad \partial\Omega \times (0,T^*)\pd \label{linear_momentum_eq.3}
\end{alignat}
\end{subequations}
We then establish a map
\[
\Phi:C_{T^*}(M)\to X_{T^*}, \qquad \Phi(\bu)=\u.
\]
The goal is to show that, under suitable choices of $T^*$ and $M$,
\[
\Phi:C_{T^*}(M)\to C_{T^*}(M)
\]
and that $\Phi$ admits a fixed point.

\subsubsection{The self-map estimate and the existence of fixed points}
The remaining step is to show that the map $\Phi$ constructed above is a self-map on
$C_{T^*}(M)$ for a suitable choice of $M$ and $T^*$.  We isolate the required
estimates in the following proposition.  Its proof is given in
Appendix~\ref{app:fixed_point_self_map_estimate}.

\begin{proposition}[Self-map estimate for the fixed-point scheme]
\label{prop:fixed_point_self_map_estimate}
Let $M>0$ and let $\bu\in C_{T^*}(M)$.  Let $\brho$, $\btheta$, and
$\u=\Phi(\bu)$ be defined by \eqref{linear_density_eq},
\eqref{linear_temperature_eq}, and \eqref{linear_momentum_eq}, respectively.
Then, after choosing $T^*=T^*(M)>0$ sufficiently small, the following hold:
\begin{enumerate}
\item There exist constants $0<c_0<c_1$, depending only on the initial and
inflow boundary density bounds, such that
\[
 c_0\le \brho(x,t)\le c_1 \qquad \text{in }\Omega\times(0,T^*).
\]
Moreover,
\[
 \brho\in L^\infty(0,T^*;H^2(\Omega)),
 \qquad
 \brho_t\in L^\infty(0,T^*;H^1(\Omega)).
\]
\item There exists a nondecreasing function $F=F(M)$ such that
\[
 \|\btheta\|_{X_{T^*}}^2\le F(M).
\]
\item If $T^*$ is further chosen so that
\[
 T^*\bigl(M+F(M)\bigr)\le 1,
\]
then there is a constant $C_*$, depending only on the data and independent of
$M$, such that
\[
 \|\Phi(\bu)\|_{X_{T^*}}=\|\u\|_{X_{T^*}}\le C_*.
\]
\end{enumerate}
\end{proposition}

Assuming Proposition~\ref{prop:fixed_point_self_map_estimate}, choose $M>0$ so
large that $C_*\le M$.  Then choose $T^*>0$ sufficiently small so that all the
smallness requirements in the proposition are satisfied.  Since the solution
$\u=\Phi(\bu)$ has the same initial value, initial time derivative, and boundary
value prescribed in the definition of $C_{T^*}(M)$, the estimate
$\|\u\|_{X_{T^*}}\le C_*\le M$ gives
\[
 \Phi:C_{T^*}(M)\to C_{T^*}(M).
\]

We endow $C_{T^*}(M)$ with the product weak/weak-$*$ topology induced by the
ambient spaces appearing in the definition of $X_{T^*}$.  The bounds above,
together with the Banach--Alaoglu theorem, imply that $C_{T^*}(M)$ is nonempty,
bounded, closed, convex, and compact in this topology.  The weak stability of the
transport equation and of the two linear parabolic problems defining $\brho$,
$\btheta$, and $\u=\Phi(\bu)$ implies that $\Phi$ is continuous with respect to
this topology.  Therefore, by the Schauder--Tychonoff fixed-point theorem,
$\Phi$ admits a fixed point $\bu\in C_{T^*}(M)$.  The corresponding triple
$(\brho,\bu,\btheta)$ is a strong solution to \eqref{general_equations} on
$\Omega\times(0,T^*)$.

\section{Proof of Theorem~\ref{thm:main2}: the blow-up criterion}
In this section, we prove the continuation criterion by deriving a priori estimates under the assumption that the quantities appearing in the blow-up condition remain finite on a time interval $(0,T)$. The argument proceeds in a hierarchical way. We first establish lower-order energy bounds for $\u$, $\vartheta$, and $\varrho$, then bootstrap these bounds to higher spatial regularity, and finally close the highest-order estimates needed to continue the strong solution beyond time $T$.
The key point is that, under the continuation assumption, all norms required by the local existence theory remain bounded up to time $T$.
\subsection{Blow-up criteria}

In this subsection, we introduce the a priori assumption used in the continuation argument. Recall that $\varrho_{{}_0}$ and $\varrho_B$ are assumed to be strictly positive. Moreover, we assume that
\begin{equation}\label{L1Linfty_bound_for_grad_u}
\int_0^T \Big[\|\nabla \u\|_{L^\infty(\Omega)} + \|\vartheta\|_{L^\infty(\Omega)}^{5+\delta} \dt \Big]\dt < \infty \quad\text{for some $\delta > 0$\pd}
\end{equation}
We note that, since the density satisfies
\[
\inf_{x \in \Omega} \varrho(x,t) > 0
\qquad \text{for all } t \in (0,T),
\]
Proposition \ref{prop:2.2}, together with Lemma \ref{lem:lower_upper_bound_for_rho} under assumption \eqref{L1Linfty_bound_for_grad_u}, implies that
\[
\|\u\|_{L^\infty(Q_T)} < \infty.
\]
\subsection{Basic energy estimates}
We begin with the basic energy level. The purpose of this subsection is to obtain the first uniform bounds for the gradients of $\u$ and $\vartheta$, together with a corresponding control of the density. These estimates provide the starting point for the whole continuation argument. In the inflow-outflow setting, a special difficulty comes from the boundary contributions generated by the continuity equation; therefore, besides the standard energy estimates for $\u$ and $\vartheta$, we must also keep track of the boundary terms arising in the density estimates.

\subsubsection{\texorpdfstring{$L^2_t H^1_x$}{L²ₜH¹ₓ} and \texorpdfstring{$L^\infty_t L^2_x$}{L²ₜL²ₓ} estimates for $\u$ and \texorpdfstring{$\vartheta$}{θ}}
We first test the momentum and temperature equations against $\u-\uB$ and
$\vartheta-\tB$, respectively. This choice is adapted to the nonhomogeneous Dirichlet boundary condition and avoids unwanted boundary contributions in the basic energy identity. At this level, the goal is not yet to close the highest-order norms, but rather to obtain the first coercive bounds that will later be combined with elliptic regularity and transport estimates.

Testing \eqref{momentum_eq} and \eqref{temperature_eq} against $\u - \uB$ and $\vartheta - \tB$ respectively, we immediately obtain the following
\begin{subequations}\label{collection_of_basic_estimates}
\begin{align}
&\frac{1}{2} \frac{d}{dt} \int_\Omega \varrho |\u \hspace{-.5pt}-\hspace{-.5pt} \uB|^2 \dx \hspace{-.5pt}+\hspace{-.5pt} \int_\Omega |\nabla \u|^2 \dx \nonumber\\
&\qquad\qquad\le C \Big[1 \hspace{-.5pt}+\hspace{-.5pt} \int_\Omega \varrho |\f|^2 \dx \hspace{-.5pt}+\hspace{-.5pt} \int_\Omega \varrho \big(|\u \hspace{-.5pt}-\hspace{-.5pt} \uB|^2 \hspace{-.5pt}+\hspace{-.5pt} |\vartheta \hspace{-.5pt}-\hspace{-.5pt} \tB|^2\big) \dx \Big]. \label{collection_of_basic_estimates.2}\\
&\frac{1}{2} \frac{d}{dt} \int_\Omega \varrho |\vartheta \hspace{-.5pt}-\hspace{-.5pt} \tB|^2 \dx \hspace{-.5pt}+\hspace{-.5pt} \kappa \int_\Omega |\nabla \vartheta|^2 \dx \nonumber\\&\qquad\qquad\le C \int_\Omega \big(1 \hspace{-.5pt}+\hspace{-.5pt} \|\nabla \u\|_{L^\infty(\Omega)} \big) \Big[1 \hspace{-.5pt}+\hspace{-.5pt} |\nabla \u|^2 \hspace{-.5pt}+\hspace{-.5pt} \varrho |\vartheta \hspace{-.5pt}-\hspace{-.5pt} \tB|^2 \Big] \dx\pd \label{collection_of_basic_estimate.4}
\end{align}
\end{subequations}

\subsubsection{\texorpdfstring{$L^\infty_t H^1_x$}{LᵒᵒₜH¹ₓ}-estimates for $\u$ and \texorpdfstring{$\vartheta$}{θ} and \texorpdfstring{$L^2_t L^2_x$}{L²ₜL²ₓ}-estimates for \texorpdfstring{$\rD_t \u$}{Dₜu} and \texorpdfstring{$\rD_t \vartheta$}{Dₜθ}}
The next step is to work with the material derivatives $D_t\u$ and $D_t\vartheta$, which are the natural quantities associated with the transport structure of the system. The specific test functions
\[
D_t\u-\uB\cdot\nabla\u
\qquad\text{and}\qquad
D_t\vartheta-\uB\cdot\nabla\vartheta
\]
are chosen so that the boundary values remain compatible with the inhomogeneous boundary condition. This gives a gain of one time derivative at the energy level and prepares the ground for the higher-order elliptic estimates used later.

Testing the temperature equation \eqref{temperature_eq} against
$\rD_t \vartheta - \uB \cdot  \nabla \vartheta \equiv \vartheta_t + (\u - \uB) \cdot \nabla \vartheta$, by the fact that $\vartheta_t + (\u - \uB) \cdot \nabla \vartheta = 0$ on $\partial\Omega$,~
we find that
\begin{align}
& \|\sqrt{\varrho}\, \rD_t \vartheta\|^2_{L^2(\Omega)} + \kappa \int_\Omega \nabla \vartheta \cdot \nabla \big[\vartheta_t + (\u - \uB) \cdot \nabla \vartheta\big] \dx \nonumber\\
&\qquad = \int_\Omega \varrho \rD_t \vartheta (\uB \cdot \nabla \vartheta) \dx + \int_\Omega |\nabla \u|^2 \rD_t \vartheta \dx - \int_\Omega \varrho\hpt\vartheta \div \u \rD_t \vartheta \dx + \int_\Omega \varrho g \rD_t \vartheta \dx \nonumber\\
&\qquad \le C_\varepsilon \Big[\|\sqrt{\varrho}\, \nabla \vartheta\|^2_{L^2(\Omega)} + \|\nabla \u\|^4_{L^4(\Omega)} + \|\vartheta \div \u\|^2_{L^2(\Omega)} + \|g\|^2_{L^2(\Omega)}\Big]  \nonumber\\&\qquad\qquad\qquad\qquad+ \varepsilon \|\sqrt{\varrho}\, \rD_t \vartheta\|^2_{L^2(\Omega)}\pd \label{collection_of_basic_estimate.5_temp}
\end{align}
Since
\begin{align*}
\int_\Omega \nabla \vartheta \cdot &\nabla \big[(\u - \uB) \cdot \nabla \vartheta\big] \dx = \int_\Omega (\nabla \vartheta \otimes \nabla \vartheta) : \nabla (\u - \uB) \dx\\& + \int_\Omega (\u - \uB) \cdot \nabla \Big(\frac{1}{2} \big|\nabla \vartheta\big|^2\Big) \dx \\
&= \int_\Omega (\nabla \vartheta \otimes \nabla \vartheta) : \nabla (\u - \uB) \dx - \frac{1}{2} \int_\Omega \div (\u - \uB) \big|\nabla \vartheta\big|^2 \dx\cm
\end{align*}
we have
$$
\left|\int_\Omega \nabla \vartheta \cdot \nabla \big[(\u - \uB) \cdot \nabla \vartheta\big] \dx \right| \le \frac{3}{2} \big(\|\nabla \uB\|_{L^\infty(\Omega)} + \|\nabla \u\|_{L^\infty(\Omega)}\big) \|\nabla \vartheta\|^2_{L^2(\Omega)}\cm
$$
so by choosing $\varepsilon > 0$ sufficiently small in \eqref{collection_of_basic_estimate.5_temp} then integrating in time,  we conclude that
\begin{align}\label{collection_of_basic_estimate.5}
 \|\nabla \vartheta(t)\|^2_{L^2(\Omega)} + \int_0^t \|\sqrt{\varrho}\, \rD_t \vartheta\|^2_{L^2(\Omega)} \dt' \preceq \|\nabla \vartheta_0\|^2_{L^2(\Omega)} + \|g\|^2_{L^2(Q_T)} + \int_0^t \|\nabla \u\|^4_{L^4(\Omega)} dt' \vspace{.15cm}\nonumber\\
  + \int_0^t \big(1 + \|\nabla \u\|_{L^\infty(\Omega)} + \|\vartheta\|^2_{L^\infty(\Omega)} \big) \big(\|\nabla \vartheta\|^2_{L^2(\Omega)} + \|\nabla \u\|^2_{L^2(\Omega)}\big) \dt',
\end{align}
where $C = C(\|\uB\|_{W^{1,\infty}(\Omega)},\kappa)$ and $A\preceq B$ means $A\leq C B$ for a constant number $C>0.$

Testing \eqref{momentum_eq} against $\rD_t \u - \uB\cdot \nabla \u$ (which vanishes on $\partial\Omega$), we obtain that
\begin{align}
& \int_\Omega \varrho |\rD_t \u|^2 dx + \frac{1}{2}\frac{d}{dt} \int_\Omega |\nabla \u|^2 \dx\nonumber\\
&\qquad\qquad = \int_\Omega \varrho \rD_t \u \cdot (\uB \cdot \nabla \u) \dx - \int_\Omega \nabla p \cdot (\rD_t \u - \uB \cdot \nabla \u) dx \nonumber\\
&\qquad\qquad + \int_\Omega \Delta \u \cdot \big[(\u - \uB) \cdot \nabla \u\big] \dx + \int_\Omega \varrho \f \cdot \big[\rD_t \u - \uB \cdot \nabla \u\big] \dx \nonumber\\
&\qquad\qquad \preceq \frac{1}{2} \int_\Omega \varrho |\rD_t \u|^2 \dx +  \int_\Omega \Big[|\nabla \u|^2 + \varrho |\f|^2 \Big] \dx - \int_\Omega \nabla p \cdot (\rD_t \u - \uB \cdot \nabla \u) dx \label{LinftyL2_estimate_for_u_temp}\\
&\qquad\qquad + \int_\Omega \Delta \u \cdot \big[(\u - \uB) \cdot \nabla \u\big] \dx \pd \nonumber
\end{align}
Note that
\begin{align*}
& - \int_\Omega \nabla p \cdot \big[\rD_t \u - \uB \cdot \nabla \u\big] \dx = \int_\Omega p\,\div \big[\rD_t \u - \uB \cdot \nabla \u\big] \dx \\
&\qquad = \frac{d}{dt} \int_\Omega p\,\div \u \dx - \int_\Omega p_t \div \u \dx + \int_\Omega p\,\div \big[(\u-\uB) \cdot \nabla \u\big] \dx \\
&\qquad = \frac{d}{dt} \int_\Omega p\,\div \u \dx - \int_\Omega \big[p_t + \div(p\u)\big] \div \u \dx + \int_\Omega \div (p\u) \div \u \dx \\
&\qquad\quad + \int_\Omega p\,\div \big[(\u-\uB) \cdot \nabla \u\big] \dx \cm
\end{align*}
where the last integral is further computed as follows:
\begin{align*}
& \int_\Omega p\,\div \big[(\u-\uB)\cdot \nabla \u\big] \dx
= \int_\Omega p\, \big[(\u - \uB)^k,_i \u^i,_k + (\u - \uB)^k \u^i,_{ki} \big] \dx \\
&\qquad = \int_\Omega \varrho (\vartheta - \tB) (\u - \uB)^k,_i \u^i,_k \dx + \int_\Omega \varrho\,\tB (\u - \uB)^k,_i \u^i,_k \dx \\
&\qquad\quad - \int_\Omega \big[(\u - \uB)\cdot \nabla p + p\,\div (\u - \uB) \big] \div \u \dx\pd
\end{align*}
Therefore, the Poincar\'e inequality $\|\vartheta-\tB\|_{L^2(\Omega)} \le C \|\nabla \vartheta\|_{L^2(\Omega)}$ implies that
\begin{align*}
 -& \int_\Omega \nabla p \cdot \big[\rD_t \u - \uB\cdot \nabla \u\big] \dx \\
&\quad = \frac{d}{dt} \!\int_\Omega p\,\div \u \dx -\! \int_\Omega \big[|\nabla \u|^2 \!- p\,\div \u+ \kappa \Delta (\vartheta - \tB) \big] \div \u \dx \\
&\quad\quad +\! \int_\Omega \varrho (\vartheta - \tB) (\u - \uB)^k,_i \u^i,_k dx  \\
&\quad\quad + \int_\Omega \varrho\,\tB (\u - \uB)^k,_i \u^i,_k \dx + \int_\Omega \big[ \uB \cdot \nabla p + p\,\div \uB) \big] \div \u \dx \\
&\quad \preceq  \frac{d}{dt} \int_\Omega p\,\div \u \dx+ \Big[ \big(1+\|\nabla \u\|_{L^\infty(\Omega)} \big) \big(1 + \|\nabla \u\|^2_{L^2(\Omega)} \!+ \|\nabla \vartheta\|^2_{L^2(\Omega)} \!+ \|\nabla \varrho\|^2_{L^2(\Omega)}\big)\\
&\quad\quad + \|\vartheta\|^2_{H^2(\Omega)}\Big].
\end{align*}
Moreover,
\begin{align*}
& \int_\Omega \Delta \u \cdot \big[(\u - \uB) \cdot \nabla \u\big] \dx
= - \int_\Omega \u^i,_k \big[(\u - \uB)^j,_k \u^i,_j + (\u - \uB)^j \u^i,_{jk} \big] \dx \\
&\qquad = - \int_\Omega \u^i,_k (\u - \uB)^j,_k \u^i,_j \dx - \frac{1}{2} \int_\Omega  (\u - \uB)^j (\u^i,_k \u^i,_k),_j \dx \\
&\qquad = - \int_\Omega \u^i,_k (\u - \uB)^j,_k \u^i,_j \dx + \frac{1}{2} \int_\Omega \div (\u - \uB) |\nabla \u|^2 \dx \\
&\qquad \preceq\big(1 + \|\nabla \u\|_{L^\infty(\Omega)}\big) \|\nabla \u\|^2_{L^2(\Omega)} \cm
\end{align*}
so from \eqref{LinftyL2_estimate_for_u_temp} we have
\begin{align*}
& \frac{1}{2} \int_\Omega \varrho |\rD_t \u|^2 \dx + \frac{1}{2} \frac{d}{dt} \int_\Omega |\nabla \u|^2\dx \preceq \frac{d}{dt} \int_\Omega p\,\div \u \dx + \Big[\|\f\|^2_{L^2(Q_T)} + \|\vartheta\|^2_{H^2(\Omega)} \\
&\quad\quad\ + \big(1+\|\nabla \u\|_{L^\infty(\Omega)} \big) \big(1 + \|\nabla \u\|^2_{L^2(\Omega)} + \|\nabla \vartheta\|^2_{L^2(\Omega)} + \|\nabla \varrho\|^2_{L^2(\Omega)}\big) \Big].
\end{align*}
where the constant depends on $\|\varrho\|_{L^\infty(Q_T)}$, $\|\uB\|_{W^{1,\infty}(\Omega)}$, $\|\tB\|_{W^{1,\infty}(\Omega)}$ and $\Omega$. Integrating in time, we obtain that for $0\le t \le T$,
\begin{align}
& \|\nabla \u(t)\|^2_{L^2(\Omega)} + \int_0^t \|\sqrt{\varrho}\, \rD_t \u\|^2_{L^2(\Omega)} \dt'\preceq\|\nabla \u_{{}_0}\|^2_{L^2(\Omega)} \nonumber\\
&\qquad  +  \int_\Omega p(t) \div \u(t)\dx -  \int_\Omega \varrho_{{}_0} \vartheta_0 \div \u_{{}_0} \dx + \left[\|\f\|^2_{L^2(Q_T)} +\int_0^t \|\vartheta\|^2_{H^2(\Omega)} \dt'\right. \label{LinftyL2_estimate_for_grad_u_temp2}\\
&\qquad\quad +\left.   \int_0^t \big(1+\|\nabla \u\|_{L^\infty(\Omega)} \big) \big(1 + \|\nabla \u\|^2_{L^2(\Omega)} + \|\nabla \vartheta\|^2_{L^2(\Omega)} + \|\nabla \varrho\|^2_{L^2(\Omega)}\big) dt' \right]. \nonumber
\end{align}
Since
\begin{align*}
\int_\Omega p\,\div \u \dx = \int_\Omega \varrho (\vartheta - \tB) \div \u \dx& + \int_\Omega \varrho \tB \div \u \dx \\
&\preceq \Big[1 + \int_\Omega \varrho |\vartheta - \tB|^2 \dx\Big] +\|\nabla \u\|^2_{L^2(\Omega)}
\end{align*}
for some constant depending on $\|\varrho\|_{L^\infty(Q_T)}$, we conclude from \eqref{collection_of_basic_estimate.4} and \eqref{LinftyL2_estimate_for_grad_u_temp2} that
\begin{align}
& \|\nabla \u(t)\|^2_{L^2(\Omega)} + \int_0^t \|\sqrt{\varrho}\, \rD_t \u\|^2_{L^2(\Omega)} \dt' \nonumber\\
&\qquad \preceq \left[\|\u_{{}_0}\|^2_{H^1(\Omega)} + \|\vartheta_0\|^2_{L^\infty(\Omega)} + \|\f\|^2_{L^2(Q_T)} + \int_0^t \|\vartheta\|^2_{H^2(\Omega)} \dt'  \right. \label{LinftyL2_estimate_for_grad_u_temp3}\\
&\qquad\qquad + \left.\int_0^t \big(1+\|\nabla \u\|_{L^\infty(\Omega)} \big) \big(1 + \|\nabla \u\|^2_{L^2(\Omega)} + \|\nabla \vartheta\|^2_{L^2(\Omega)} + \|\nabla \varrho\|^2_{L^2(\Omega)}\big) dt' \right],\nonumber
\end{align}
where the constant depends on $\|\varrho\|_{L^\infty(Q_T)}$, $\|\uB\|_{W^{1,\infty}(\Omega)}$, $\|\tB\|_{W^{1,\infty}(\Omega)}$ and $\Omega$.
On the other hand, from \eqref{temperature_eq} the elliptic estimate implies that
\begin{align}
\|\vartheta\|_{H^2(\Omega)} &\preceq\|\tB\|_{H^2(\Omega)} + \|\varrho \rD_t \vartheta\|_{L^2(\Omega)} + \big\||\nabla \u|^2\big\|_{L^2(\Omega)} + \|\varrho\hpt\vartheta \div\u\|_{L^2(\Omega)} \nonumber\\
&\preceq 1 + \|\varrho \rD_t \vartheta\|_{L^2(\Omega)} + \|\nabla \u\|_{L^4(\Omega)}^2 + \|\vartheta\|_{L^\infty(\Omega)} \|\nabla \u\|_{L^2(\Omega)}  \label{theta_H2_estimate}
\end{align}
so that
$$
\int_0^t \|\vartheta\|^2_{H^2(\Omega)} \dt' \preceq1 + \int_0^t \|\varrho \rD_t \vartheta\|^2_{L^2(\Omega)} \dt' + \int_0^t \|\nabla \u\|^4_{L^4(\Omega)} \dt' + \int_0^t \|\vartheta\|^2_{L^\infty(\Omega)} \|\nabla \u\|^2_{L^2(\Omega)} \dt'\scl
$$
thus \eqref{collection_of_basic_estimate.5} and \eqref{LinftyL2_estimate_for_grad_u_temp3} together yield
\begin{align}
& \|\nabla \u(t)\|^2_{L^2(\Omega)} \!+ \int_0^t \|\sqrt{\varrho}\, \rD_t \u\|^2_{L^2(\Omega)} \dt' \nonumber\\
&\quad \preceq\|\u_{{}_0}\|^2_{H^1(\Omega)} \!+ \|\vartheta_0\|^2_{L^\infty(\Omega)} \!+ \|\vartheta_0\|^2_{H^1(\Omega)} \!+ \|\f\|^2_{L^2(Q_T)} \!+ \|g\|^2_{L^2(Q_T)} \!+ \int_0^t \|\nabla \u\|^4_{L^4(\Omega)} \dt' \label{LinftyL2_estimate_for_grad_u_temp4}\\
&\qquad + \int_0^t \big(1+\|\nabla \u\|_{L^\infty(\Omega)} \!+ \|\vartheta\|^2_{L^\infty(\Omega)} \big) \big(1 + \|\nabla \u\|^2_{L^2(\Omega)} \!+ \|\nabla \vartheta\|^2_{L^2(\Omega)} \!+ \|\nabla \varrho\|^2_{L^2(\Omega)}\big) dt' . \nonumber
\end{align}

\begin{remark}
Suppose that instead of \eqref{L1Linfty_bound_for_grad_u} we assume only the usual Beale-Kato-Majda type blow-up criterion
$$
\smallint{0}{T} \|\nabla \u\|_{L^\infty(\Omega)}\dt < \infty
$$
without assuming any bounds for $\vartheta$. Then we have $\|\varrho\|_{L^\infty(Q_T)} < \infty$ by Lemma \ref{lem:lower_upper_bound_for_rho}. To proceed, we still need some bounds for $\vartheta$, and this can only be obtained by looking at the temperature equation \eqref{temperature_eq} which requires better estimate of the heat source $|\nabla \u|^2$ (which need not even be integrable in time if we only assume $\nabla \u \in L^1(0,T;L^\infty(\Omega))$). Therefore, we are forced to
assume the bound $\|\vartheta\|_{L^{5+\delta}(0,T;L^\infty(\Omega))} < \infty$ in \eqref{L1Linfty_bound_for_grad_u}.

On the other hand, the blow-up criterion
\begin{equation}\label{Huang's_assumption}
\smallint{0}{T} \|\nabla \u\|^2_{L^\infty(\Omega)}\dt < \infty
\end{equation}
in \cite{Hu2009} gives the least time integrability of the forcing $|\nabla \u|^2$ which directly leads to a bound for $\vartheta$ (Lemma 3.2). Note that this implies that our assumption \eqref{L1Linfty_bound_for_grad_u} is weaker than \eqref{Huang's_assumption}.
\end{remark}

\subsubsection{\texorpdfstring{$L^\infty_t H^1_x$}{LᵒᵒₜH¹ₓ}-estimates for \texorpdfstring{$\varrho$}{ρ}}\label{sec:H1_estimate_of_rho}

We now turn to the density. The main difficulty here is that differentiating the continuity equation produces boundary terms on $\Gamma_{\mathrm{in}}$ with the wrong sign, and these terms do not disappear under the inflow-outflow boundary condition. For this reason, the density estimate must be handled separately and requires a more careful use of the boundary information. In the model flat case, the structure can be seen directly from the formula for the normal derivative of $\varrho$ on the inflow boundary, while the general geometric case is postponed to the Appendix.

Differentiating the equation of continuity \eqref{density_eq} w.r.t. $x_k$
\begin{equation}\label{spatial_derivative_of_continuity_eq}
\partial_t \varrho,_k = - \div (\varrho,_k \u) - \div (\varrho \u,_k) = - \varrho,_k \div\u - \u \cdot \nabla \varrho,_k - \u,_k \cdot \nabla \varrho - \varrho\hpt \div \u,_k
\end{equation}
and then testing \eqref{spatial_derivative_of_continuity_eq} against $\varrho,_k \equiv \partial_{x_k} \varrho$, we find that
\begin{align}
& \frac{1}{2} \frac{d}{dt} \int_\Omega |\varrho,_k\!|^2 \dx \nonumber\\
& = - \frac{1}{2} \int_{\partial\Omega} (\u \cdot \bN) |\varrho,_k\!|^2 \dS - \int_\Omega \div \u |\varrho,_k\!|^2 \dx - \int_\Omega \varrho,_k \u,_k \cdot \nabla \varrho \dx - \int_\Omega \varrho \varrho,_k \div \u,_k \dx \nonumber\\
&\preceq - \frac{1}{2} \int_{\Gin} (\uB \cdot \bN) |\varrho,_k\!|^2 \dS
+ 2 \|\nabla \u\|_{L^\infty(\Omega)} \|\nabla \varrho\|^2_{L^2(\Omega)} + \|\varrho\|_{L^\infty(\Omega)} \|\nabla \varrho\|_{L^2(\Omega)} \|\u\|_{H^2(\Omega)}. \label{basic_H1_bound_for_rho}
\end{align}
The trouble term is the integral on the boundary which is a purely positive term since $\uB \cdot \bN < 0$ on $\Gin$. For the purpose of demonstration, let us assume that $\Omega = \bbT^2 \times (0,1)$, where $\Gin = \bbT^2 \times \{0\}$ and $\Gamma_{\text{out}} = \bbT^2 \times \{1\}$, and the proof of the general case is given in Section \ref{sec:bdy_integral_estimate1}. Then for $k=1,2$, $\varrho,_k$ is the derivative of $\rB$ in the tangential direction so we have
\begin{equation}\label{boundary_term_bound1}
- \frac{1}{2} \int_{\Gin} (\uB \cdot \bN) |\varrho,_k\!|^2 \dS \le \frac{1}{2} \uBu \|\rB\|^2_{H^1(\Gin)}\qquad\text{for $k=1,2$.}
\end{equation}
For $k=3$, we recall that since $\uB \cdot \bbN  < 0$ on $\Gin$, the compactness of $\Gin$ implies that
$$
\uBl \equiv \inf_{\Gin} (- \uB \cdot \bbN) > 0\pd
$$
Let $\u = (u_1,u_2,u_3)$. Then the equation of continuity \eqref{density_eq} implies that
\begin{equation}\label{normal_derivative_in_terms_of_tangential_derivative1}
\varrho,_{3} = - \frac{1}{u_3} \Big(\partial_t \varrho + u_1 \varrho,_{1} + u_2 \varrho,_{2} + \varrho\, \div \u\Big)
\end{equation}
whenever $u_3 \ne 0$.
Note that the fact that $- u_3 = \uB \cdot \bN < - \uBl$ implies that $u_3 > \uBl > 0$. Then
\begin{align*}
- \frac{1}{2} \int_{\Gin} (\uB \cdot \bN)& |\varrho,_3\!|^2 \dS\\
 &\preceq \int_{\Gin} \frac{1}{u_3} \Big(|\partial_t \rB| + |u_1| |\rB,_1| + |u_2| |\rB,_2| + \rB|\div \u|\Big)^2 \dS \\
&\preceq \frac{1}{\uBl} \int_{\Gin} \Big(|\partial_t \rB|^2 + |u_1|^2 |\rB,_1|^2 + |u_2|^2 |\rB,_2|^2 + \rB^2 |\div \u|^2\Big) \dS \\
&\preceq \frac{1}{\uBl} \Big(\|\partial_t \rB\|^2_{L^2(\Gin)} + \uBu^2 \|\rB\|^2_{H^1(\Gin)} + \|\rB\|^2_{L^\infty(\Gin)} \|\div \u\|^2_{L^2(\partial\Omega)} \Big).
\end{align*}
The interpolation inequality
\begin{equation}\label{interpolation_ineq}
\|f\|^p_{L^p(\partial\Omega)} \le C \|f\|^{\frac{q(4-p)}{6-q}}_{L^q(\Omega)} \|f\|^{\frac{6p-4q}{6-q}}_{H^1(\Omega)} \qquad\text{for $2\le p< 4$ and $2(p-1)\le q< 6$}
\end{equation}
for some constant $C$ \footnote{This $C$ is a pure constant depending on $p$ and $q$ because $\Omega = \bbT^2 \times (0,1)$, but in general $C = C(\Omega,p,q)$.}
further shows that
\begin{align}
- \frac{1}{2} \int_{\Gin} (\uB \cdot \bN) |\varrho,_3\!|^2 \dS &\preceq\|\partial_t \rB\|^2_{L^2(\Gin)} + \|\rB\|^2_{H^1(\Gin)} + \|\div \u\|_{L^2(\Omega)} \|\div \u\|_{H^1(\Omega)}  \nonumber\\
&\preceq\|\partial_t \rB\|^2_{L^2(\Gin)} + \|\rB\|^2_{H^1(\Gin)} + \|\u\|^\frac{1}{2}_{L^2(\Omega)} \|\u\|^\frac{3}{2}_{H^2(\Omega)} \label{boundary_term_bound2}
\end{align}
for some constant depending on $\|\div \u\|_{L^1(0,T;L^\infty(\Omega))},\rou,\rBu,\uBl,\uBu$.

Combining \eqref{basic_H1_bound_for_rho}, \eqref{boundary_term_bound1} and \eqref{boundary_term_bound2} and making use of \eqref{Linfty_bound_for_rho} and \eqref{Linfty_bound_for_u}, by Young's inequality we conclude that
\begin{equation}\label{collection_of_basic_estimates.1}
\frac{d}{dt} \|\nabla \varrho\|^2_{L^2(\Omega)} \preceq 1 + \|\nabla \u\|_{L^\infty(\Omega)} \|\nabla \varrho\|^2_{L^2(\Omega)} + \|\u\|^2_{H^2(\Omega)}\cm
\end{equation}
where the constant depends on $\|\div \u\|_{L^1(0,T;L^\infty(\Omega))}, \|\partial_t \rB\|_{L^2(\Gin)}, \|\rB\|_{H^1(\Gin)},\rou,\rBu,\\\uBl,\uBu$.

By elliptic estimates, the momentum equation \eqref{momentum_eq} leads to that for $1<p<\infty$,
\begin{align}
\|\u\|_{W^{2,p}(\Omega)} &\preceq\|\uB\|_{W^{2,p}\Omega)} + \|\varrho \rD_t\u \|_{L^p(\Omega)} + \|\nabla p\|_{L^p(\Omega)} + \|\varrho \f\|_{L^p(\Omega)}  \nonumber\\
&\preceq\|\uB\|_{W^{2,p}\Omega)} + \|\varrho \rD_t\u \|_{L^p(\Omega)} + \|\nabla (\varrho\hpt\vartheta)\|_{L^p(\Omega)} + \|\varrho \f\|_{L^p(\Omega)}  \nonumber\\
&\preceq1 + \|\rD_t \u\|_{L^p(\Omega)} + \|\nabla \vartheta\|_{L^p(\Omega)} + \|\vartheta\|_{L^\infty(\Omega)} \|\nabla \varrho\|_{L^p(\Omega)} + \|\f\|_{L^p(\Omega)} \label{u_W2p_estimate}
\end{align}
for some constant  depending on data, $\|\nabla \u\|_{L^1(0,T;L^\infty(\Omega))}$ and $p$, so \eqref{LinftyL2_estimate_for_grad_u_temp4} and \eqref{collection_of_basic_estimates.1} together imply that
\begin{align}
& \|\nabla \varrho(t)\|^2_{L^2(\Omega)}
\preceq\|\varrho_{{}_0}\|^2_{H^1(\Omega)} + \|\u_{{}_0}\|^2_{H^1(\Omega)} + \|\vartheta_0\|^2_{L^\infty(\Omega)} + \|\vartheta_0\|^2_{H^1(\Omega)} \nonumber\\
& \qquad+ \|\f\|^2_{L^2(Q_T)} + \|g\|^2_{L^2(Q_T)} + \int_0^t \|\nabla \u\|^4_{L^4(\Omega)} \dt' \label{LinftyL2_estimate_for_grad_rho_temp}\\
&\qquad + \int_0^t \big(1+\|\nabla \u\|_{L^\infty(\Omega)} + \|\vartheta\|^2_{L^\infty(\Omega)} \big) \big(1 + \|\nabla \u\|^2_{L^2(\Omega)} + \|\nabla \vartheta\|^2_{L^2(\Omega)} + \|\nabla \varrho\|^2_{L^2(\Omega)}\big) dt' .\nonumber
\end{align}

\subsection{Higher order estimates for $\u$ and \texorpdfstring{$\vartheta$}{θ}}\label{sec:higher_order_estimate}

To estimate $D_t\u$, we use a boundary adapted version of the standard material
derivative argument. The auxiliary quantity
\[
\w = D_t\u - V\cdot\nabla W
\]
is chosen so that $\w=0$ on $\partial\Omega$, which makes it possible to test the differentiated momentum equation without generating uncontrollable boundary terms. The choice of $V$ and $W$ will later be specialized to the present inflow-outflow setting. This is the key step that transfers the lower-order energy bound into a higher-order estimate for the velocity.

Once the lower-order energy bounds are available, we bootstrap them to higher regularity. The strategy is to first derive an equation for the material derivative of the momentum, estimate $D_t\u$ at one higher level, and then combine this with elliptic regularity to recover stronger spatial bounds for $\u$ and $\vartheta$. A central nonlinear term in this procedure is the quantity
\[
\int_0^T \|\nabla\u\|_{L^4(\Omega)}^4\,dt,
\]
whose control is needed in order to close the entire estimate.

\subsubsection{\texorpdfstring{$L^2_t H^1_x$}{L²ₜH¹ₓ}-estimates for \texorpdfstring{$\rD_t \u$}{Dₜu}}\label{sec:L2H1_estimate_for_Dtu}
Next we estimate $\rD_t\u$ at one higher level.  Applying the material derivative
to the momentum equation gives, in the $H^{-1}$-sense,
\begin{equation}\label{Hoff_eq}
\varrho \rD_t^2\u+\nabla p_t+\div(\nabla p\otimes \u)
=\Delta\u_t+\div(\Delta\u\otimes\u)+\varrho\rD_t\f .
\end{equation}
For boundary compatibility, we test this equation against
\[
\w=\rD_t\u-\bfv\cdot\nabla\bfw,
\]
where $\bfv$ and $\bfw$ are chosen so that $\w=0$ on $\partial\Omega$.  The general
identity obtained in this way is precisely the boundary-adapted estimate of
Lemma~\ref{lem:linearized_material_derivative_estimate}.

In the present Dirichlet setting the boundary data are independent of time, and we
choose
\[
\bfv=\uB,
\qquad
\bfw=\u,
\qquad
\w=\rD_t\u-\uB\cdot\nabla\u=\u_t+(\u-\uB)\cdot\nabla\u .
\]
Then $\w=0$ on $\partial\Omega$ while $\u_B$ independent of time $t$.  Testing \eqref{Hoff_eq} by this $\w$, using
$p_t+\div(p\u)=\varrho\rD_t\vartheta$, and absorbing the coercive terms by Young's
inequality, we obtain
\begin{align}\label{higher_order_estimate_equality}
\frac{d}{dt}\big\|\sqrt{\varrho}\,\w\big\|^2_{L^2(\Omega)}
&+\|\nabla\w\|^2_{L^2(\Omega)}\nonumber\\
&\preceq1+\|\rD_t\f\|^2_{L^2(\Omega)}
+\|\sqrt{\varrho}\,\rD_t\u\|^2_{L^2(\Omega)}
+\|\nabla\u\|^4_{L^4(\Omega)}
+\|\u\|^2_{H^2(\Omega)}\nonumber\\
&\qquad+\|\sqrt{\varrho}\,\rD_t\vartheta\|^2_{L^2(\Omega)}
+\|\vartheta\|^2_{L^\infty(\Omega)}\|\nabla\u\|^2_{L^2(\Omega)},
\end{align}
where the constant number depends on the data, $\|\varrho\|_{L^\infty(Q_T)}$, and
$\|\u\|_{L^\infty(Q_T)}$.  Since
\[
\|\sqrt{\varrho}\,\rD_t\u\|_{L^2(\Omega)}
\preceq \|\sqrt{\varrho}\,\w\|_{L^2(\Omega)}+\|\nabla\u\|^2_{L^2(\Omega)},\]
and \[
\|\nabla\rD_t\u\|_{L^2(\Omega)}
\preceq \|\nabla\w\|_{L^2(\Omega)}+\|\u\|_{H^2(\Omega)},
\]
we combine \eqref{higher_order_estimate_equality} with
\eqref{collection_of_basic_estimate.5}, \eqref{LinftyL2_estimate_for_grad_u_temp4},
and the elliptic estimate \eqref{u_W2p_estimate}.  After integration in time we get
\begin{align}
&\big\|\big(\sqrt{\varrho}\,\rD_t\u\big)(t)\big\|^2_{L^2(\Omega)}
+\int_0^t\|\nabla\rD_t\u\|^2_{L^2(\Omega)}\,dt' \nonumber\\
&\preceq1+\|\u_{{}_0}\|^2_{H^1(\Omega)}
+\|\vartheta_0\|^2_{H^2(\Omega)}
+\|\f\|^2_{H^1(Q_T)}+\|g\|^2_{L^2(Q_T)}
+\int_0^t\|\nabla\u\|^4_{L^4(\Omega)}\,dt'  \label{L2H1_estimate_for_ut_temp}\\
&
+\int_0^t\big(1+\|\nabla\u\|_{L^\infty(\Omega)}
+\|\vartheta\|^2_{L^\infty(\Omega)}\big)
\big(1+\|\nabla\vartheta\|^2_{L^2(\Omega)}
+\|\nabla\u\|^2_{L^2(\Omega)}
+\|\nabla\varrho\|^2_{L^2(\Omega)}\big)\,dt', \nonumber
\end{align}
where the constant depends on the data,
$\|\nabla\u\|_{L^1(0,T;L^\infty(\Omega))}$, and
$\|\vartheta\|_{L^{5+\delta}(0,T;L^\infty(\Omega))}$.

\begin{remark}
The essential difference from the corresponding estimate in \cite{BaFeMi2023} is
that \eqref{L2H1_estimate_for_ut_temp} contains
$\|\nabla\varrho\|_{L^2(\Omega)}$.  This term is caused by the inflow density
boundary condition and the nonhomogeneous Dirichlet data, and it is precisely why the
continuation argument must keep track of the density estimate together with the
velocity estimate.
\end{remark}

\subsubsection{\texorpdfstring{$L^4_t L^4_x$}{L⁴ₜL⁴ₓ}-estimates for \texorpdfstring{$\nabla \u$}{Du} and its consequences}
The purpose of this subsection is to control the nonlinear quantity
\[
\int_0^T \|\nabla\u\|_{L^4(\Omega)}^4\,dt,
\]
which appears repeatedly in the preceding estimates. This term is not part of the basic energy, but it can be recovered by combining the $L^\infty$-bound for $\u$, the estimate for $D_t\u$, and elliptic regularity for the momentum equation. Once this $L^4_{t,x}$-control is established, several previously open terms become integrable in time, and the Gronwall argument can be closed. We now provide a "linear estimate" for the term $\smallint{0}{t} \|\nabla \u\|^4_{L^4(\Omega)} \dt$ in \eqref{collection_of_basic_estimate.5}, \eqref{LinftyL2_estimate_for_grad_u_temp4},
\eqref{LinftyL2_estimate_for_grad_rho_temp} and \eqref{L2H1_estimate_for_ut_temp}.
By interpolation,
$$
\|\nabla \u\|^2_{L^4(\Omega)} \preceq \|\u\|_{L^\infty(\Omega)} \|\u\|_{H^2(\Omega)} \cm
$$
so the elliptic estimate \eqref{u_W2p_estimate} and Corollary \ref{cor:u_Linfty_bound} yield
\begin{equation}\label{Du_L4_estimate}
\|\nabla \u\|^4_{L^4(\Omega)}\preceq1 +  \|
\rD_t \u\|^2_{L^2(\Omega)} + \|\vartheta\|^2_{L^\infty(\Omega)} \|\nabla \varrho\|^2_{L^2(\Omega)} + \|\nabla \vartheta\|^2_{L^2(\Omega)} + \|\f\|^2_{L^2(\Omega)}
\end{equation}
for some constant
the constant depending on the data,
$\|\varrho\|_{L^\infty(Q_T)}$ and $\|\u\|_{L^\infty(Q_T)}$.
Define
\begin{align*}
{\rm X}(t) &= \|\nabla \vartheta(t)\|^2_{L^2(\Omega)} + \|\nabla \u(t)\|^2_{L^2(\Omega)} + \|\nabla \varrho(t)\|^2_{L^2(\Omega)} + \big\|(\sqrt{\varrho}\, \rD_t \u)(t)\big\|^2_{L^2(\Omega)} \cm\\
{\rm Y}(t) &= \big\|(\sqrt{\varrho}\, \rD_t \vartheta)(t)\big\|^2_{L^2(\Omega)} + \big\|(\sqrt{\varrho}\, \rD_t \u)(t)\big\|^2_{L^2(\Omega)} + \big\|(\nabla \rD_t \u)(t)\big\|^2_{L^2(\Omega)} \cm\\
{\rm K}(t) &= 1+\|\nabla \u(t)\|_{L^\infty(\Omega)} + \|\vartheta(t)\|^2_{L^\infty(\Omega)}\pd
\end{align*}
Combining \eqref{collection_of_basic_estimate.5}, \eqref{LinftyL2_estimate_for_grad_u_temp4}, \eqref{LinftyL2_estimate_for_grad_rho_temp}, \eqref{L2H1_estimate_for_ut_temp} and \eqref{Du_L4_estimate}, we conclude that
\begin{equation}\label{main_energy_ineq}
{\rm X}(t) + \int_0^t {\rm Y}(t')\dt' \preceq1 + \|\f\|^2_{H^1(Q_T)} + \|g\|^2_{L^2(Q_T)} + \int_0^t {\rm K}(t') \big(1 + {\rm X}(t')\big)\dt'
\end{equation}
for  the constant  depending the data, $\|\nabla \u\|_{L^1(0,T;L^\infty(\Omega))}$ and $\|\vartheta\|_{L^{5+\delta}(0,T;L^\infty(\Omega))}$. By the assumption \eqref{L1Linfty_bound_for_grad_u}, ${\rm K} \in L^1(0,T)$; thus the Gronwall inequality implies that
\begin{equation}\label{closed_estimate_formal_form}
{\rm X}(t) + \int_0^t {\rm Y}(t')\dt' \preceq1 + \|\f\|^2_{H^1(Q_T)} + \|g\|^2_{L^2(Q_T)}.
\end{equation}
Using \eqref{Du_L4_estimate}, the estimate above further imply that
\begin{equation}\label{Du_L4L4_estimate}
\int_0^T \|\nabla \u\|^4_{L^4(\Omega)} dt \preceq1 + \|\f\|^2_{H^1(Q_T)} + \|g\|^2_{L^2(Q_T)},
\end{equation}
and the combination of \eqref{closed_estimate_formal_form} and \eqref{Du_L4L4_estimate} shows that
\begin{equation}\label{closed_estimate}
\hspace{-20pt}\begin{array}{rl}
& \displaystyle{} \|\nabla \vartheta(t)\|^2_{L^2(\Omega)} +\,\displaystyle{} \|\nabla \u(t)\|^2_{L^2(\Omega)} + \|\nabla \varrho(t)\|^2_{L^2(\Omega)} + \big\|\rD_t \u(t)\big\|^2_{L^2(\Omega)} \vspace{.1cm}\\
&\qquad +\displaystyle{} \int_0^t \Big[\big\|\rD_t \vartheta\big\|^2_{L^2(\Omega)} + \big\|\rD_t \u\big\|^2_{H^1(\Omega)} + \|\nabla\u\|^4_{L^4(\Omega)} \Big] \dt' \preceq1 + \|\f\|^2_{H^1(Q_T)} + \|g\|^2_{L^2(Q_T)}. \end{array}
\end{equation}

\subsubsection{\texorpdfstring{$L^2_t H^2_x$}{L²ₜH²ₓ}-estimates for $\u$ and \texorpdfstring{$\vartheta$}{θ}}\label{sec:L2H2_estimate_of_utheta}
After the estimate for $\nabla\u$ in $L^4_tL^4_x$ has been obtained, the elliptic parts of the momentum and temperature equations can be exploited more efficiently. At this stage, the equations are used to convert control of the material derivatives into control of the full spatial $H^2$-norms. This is the first point where the parabolic-elliptic structure of the system is fully reflected in the estimates. Using \eqref{u_W2p_estimate} (with $p=2$) we immediately conclude that
\begin{align*}
\|\u\|^2_{H^2(\Omega)} &\preceq1 + \|
\rD_t \u\|^2_{L^2(\Omega)} + \|\nabla \vartheta\|^2_{L^2(\Omega)} + \|\vartheta\|^2_{L^\infty(\Omega)} \|\nabla \varrho\|^2_{L^2(\Omega)} + \|\f\|^2_{L^2(\Omega)}  \nonumber \\
&\preceq\big(1+\|\vartheta\|^2_{L^\infty(\Omega)}\big) \Big[1 + \|\f\|^2_{H^1(Q_T)} \!+ \|g\|^2_{L^2(Q_T)}\Big].
\end{align*}
Similarly, \eqref{theta_H2_estimate} and \eqref{Du_L4_estimate} show that
\begin{align*}
\|\vartheta\|^2_{H^2(\Omega)} &\preceq1 + \| \rD_t \vartheta\|^2_{L^2(\Omega)} + \|\rD_t \u\|^2_{L^2(\Omega)} \\&+ \|\vartheta\|^2_{L^\infty(\Omega)} \big(\|\nabla \varrho\|^2_{L^2(\Omega)} + \|\nabla \u\|^2_{L^2(\Omega)}\big) + \|\nabla \vartheta\|^2_{L^2(\Omega)}.
\end{align*}
The combination of the two estimates above and \eqref{closed_estimate} yield
\begin{equation}\label{utheta_L2H2_estimate}
\int_0^T \Big[\|\u\|^2_{H^2(\Omega)} + \|\vartheta\|^2_{H^2(\Omega)}\Big] \dt \preceq1 \hspace{-1pt}+\hspace{-1pt} \|\f\|^2_{H^1(Q_T)} \!+ \|g\|^2_{L^2(Q_T)}.
\end{equation}
Moreover,
$$
\|\u_t\|_{H^1(\Omega)} \preceq\|\rD_t \u\|_{H^1(\Omega)} + \|\u \cdot \nabla \u\|_{H^1(\Omega)} \preceq\|\rD_t \u\|_{H^1(\Omega)} + \|\u\|_{H^2(\Omega)} + \|\nabla \u\|^2_{L^4(\Omega)},
$$
so the $L^2_t H^1_x$-estimate for $\rD_t \u$ implies that
\begin{align}
\int_0^T \|\u_t\|^2_{H^1(\Omega)} \dt &\preceq\int_0^T \Big[\|\rD_t \u\|^2_{H^1(\Omega)} + \|\u\|^2_{H^2(\Omega)} + \|\nabla \u\|^4_{L^4(\Omega)} \Big] dt \nonumber\\
&\preceq1 + \|\f\|^2_{H^1(Q_T)} \!+ \|g\|^2_{L^2(Q_T)}.
\end{align}

\begin{remark}
The assumption $\|\vartheta\|_{L^{5+\delta}(0,T;L^\infty(\Omega))} < \infty$ indeed provides better time integrability of $\|\u\|_{H^2(\Omega)}$ and $\|\nabla \u\|_{L^4(\Omega)}$:
$$
\int_0^T \Big[\|\u\|^{5+\delta}_{H^2(\Omega)} + \|\nabla \u\|^{10+2\delta}_{L^4(\Omega)}\Big] dt \preceq\text{data},\|\f\|_{H^1(Q_T)}, \|g\|_{L^2(Q_T)}.
$$
Moreover, if the condition $\|\vartheta\|_{L^{5+\delta}(0,T;L^\infty(\Omega))} < \infty$ is replaced by $\|\vartheta\|_{L^\infty(Q_T)} < \infty$, then
$$
\sup_{t\in [0,T]} \Big[\big\|\u(t)\big\|^2_{H^2(\Omega)} + \|\nabla \u(t)\|^4_{L^4(\Omega)}\Big] \preceq1 + \|\f\|^2_{H^1(Q_T)} \!+ \|g\|^2_{L^2(Q_T)}.
$$
\end{remark}

\subsubsection{\texorpdfstring{$L^\infty_t L^2_x$}{LᵒᵒₜL²ₓ}-estimate for \texorpdfstring{$\varrho_t$}{ρₜ}}
Once both $\varrho$ and $\u$ are controlled at the previous level, the estimate for $\varrho_t$ follows directly from the continuity equation. Although this step is technically simple, it is important for the later treatment of the differentiated forcing terms and for the final application of the parabolic regularity theorem to the temperature equation. From the equation of continuity \eqref{density_eq}, using \eqref{closed_estimate} it is easy to see that
\begin{align}
\|\rho_t(t)\|_{L^2(\Omega)} &\preceq\|\u(t)\|_{L^\infty(\Omega)} \|\nabla \varrho(t)\|_{L^2(\Omega)} + \|\varrho(t)\|_{L^\infty(\Omega)} \|\div\u (t)\|_{L^2(\Omega)} \nonumber\\
&\preceq1 \hspace{-1pt}+\hspace{-1pt} \|\f\|^2_{H^1(Q_T)} \!+ \|g\|^2_{L^2(Q_T)}.\label{rho_t_L2L2_estimate}
\end{align}

\subsubsection{\texorpdfstring{$L^\infty_t L^3_x$}{LᵒᵒₜL³ₓ}-estimates for \texorpdfstring{$\nabla \varrho$}{Dρ}}\label{sec:L3_estimate_of_rho}
Next, we derive an $L^3$-estimates for $\nabla \varrho$ which is needed for even higher order estimates. Testing \eqref{spatial_derivative_of_continuity_eq} against $|\varrho,_k\!|\varrho,_k$, we find that
\begin{align*}
&\frac{1}{3} \frac{d}{dt} \|\varrho,_k\!\|^3_{L^3(\Omega)} \\&= - \frac{1}{3} \int_{\partial\Omega} (\u\cdot \bN) |\varrho,_k\!|^3\dS - \frac{2}{3} \int_\Omega \div \u |\varrho,_k\!|^3\dx - \int_\Omega \big(\u,_k\cdot \nabla \varrho + \varrho\hpt \div \u,_k) |\varrho,_k\!| \varrho,_k \dx
\end{align*}
so the interpolation inequality $\|f\|_{L^3(\Omega)} \preceq \|f\|^{\frac{1}{2}}_{L^2(\Omega)} \|f\|^{\frac{1}{2}}_{H^1(\Omega)}$ shows that
\begin{align}
\frac{d}{dt} \|\varrho,_k\!\|^3_{L^3(\Omega)} &\preceq- \int_\Gin (\u\cdot \bN) |\varrho,_k\!|^3\dS + \|\nabla \u\|_{L^\infty(\Omega)} \|\nabla \varrho\|^3_{L^3(\Omega)} +  \|\nabla^2 \u\|_{L^3(\Omega)} \|\nabla \varrho\|^2_{L^3(\Omega)} \nonumber\\
&\preceq- \int_\Gin (\u\cdot \bN) |\varrho,_k\!|^3\dS + \preceq\big(1 + \|\nabla \u\|_{L^\infty(\Omega)}\big) \|\nabla \varrho\|^3_{L^3(\Omega)} + \|\u\|^3_{W^{2,3}(\Omega)}. \label{Drho_LinftyL3_estimate_temp1}
\end{align}
Using \eqref{u_W2p_estimate}, the interpolation inequality $\|f\|_{L^3(\Omega)} \le C \|f\|^\frac{1}{2}_{L^2(\Omega)} \|f\|^\frac{1}{2}_{L^6(\Omega)}$ and the continuous embedding $H^1(\Omega) \contsubset L^6(\Omega)$ show that
\begin{align*}
&\|\u\|^3_{W^{2,3}(\Omega)}\\ &\qquad\preceq\|\uB\|^3_{W^{2,3}(\Omega)} + \|\sqrt{\varrho}\, \rD_t \u\|^3_{L^3(\Omega)} + \|\nabla \vartheta\|^3_{L^3(\Omega)} + \|\vartheta\|^3_{L^\infty(\Omega)} \|\nabla \varrho\|^3_{L^3(\Omega)} + \|\f\|^3_{L^3(\Omega)} \\
&\qquad\preceq1 + \|\rD_t \u\|^3_{L^3(\Omega)} + \|\nabla \vartheta\|^3_{L^3(\Omega)} + \|\vartheta\|^3_{L^\infty(\Omega)} \|\nabla \varrho\|^3_{L^3(\Omega)} + \|\f\|^3_{L^3(\Omega)}  \\
&\qquad\preceq1 + \|\rD_t \u\|^\frac{3}{2}_{L^2(\Omega)} \|\rD_t \u\|^\frac{3}{2}_{H^1(\Omega)} + \|\nabla \vartheta\|^\frac{3}{2}_{L^2(\Omega)} \|\vartheta\|^\frac{3}{2}_{H^2(\Omega)} + \|\f\|^\frac{3}{2}_{L^2(\Omega)} \|\f\|^\frac{3}{2}_{H^1(\Omega)} \\
&\qquad\quad +  \|\vartheta\|^3_{L^\infty(\Omega)} \|\nabla \varrho\|^3_{L^3(\Omega)}\pd
\end{align*}
By Young's inequality,
\begin{align*}
 \frac{d}{dt} \|\varrho,_k\!\|^3_{L^3(\Omega)}
&\preceq- \int_\Gin (\u\cdot \bN) |\varrho,_k\!|^3\dS +  \big(1 + \|\nabla \u\|_{L^\infty(\Omega)} + \|\vartheta\|^3_{L^\infty(\Omega)}\big) \|\nabla \varrho\|^3_{L^3(\Omega)} \\
&\qquad + 1 + \|\rD_t \u\|^6_{L^2(\Omega)} + \|\nabla \vartheta\|^6_{L^2(\Omega)}+ \|\f\|^6_{L^2(\Omega)} + \|\nabla \rD_t \u\|^2_{L^2(\Omega)}  \\&\qquad+ \|\vartheta\|^2_{H^2(\Omega)} + \|\f\|^2_{H^1(\Omega)},
\end{align*}
and estimates \eqref{closed_estimate}-\eqref{Drho_LinftyL3_estimate_temp1} further show that
\begin{equation}\label{Drho_LinftyL3_estimate_temp2}
\begin{array}{l}
\displaystyle{}\frac{d}{dt} \|\varrho,_k\!\|^3_{L^3(\Omega)} \preceq - \int_\Gin (\u\cdot \bN) |\varrho,_k\!|^3\dS +  \big(1 + \|\nabla \u\|_{L^\infty(\Omega)} \!+\hspace{-1pt} \|\vartheta\|^3_{L^\infty(\Omega)}\big) \|\nabla \varrho\|^3_{L^3(\Omega)} \vspace{.1cm}\\
\qquad +\, \Big[1 + \|\f\|^2_{H^1(Q_T)} + \|g\|^2_{L^2(Q_T)} \Big]^3 + \|\nabla \rD_t \u\|^2_{L^2(\Omega)} + \|\vartheta\|^2_{H^2(\Omega)} + \|\f\|^2_{H^1(\Omega)}.
\end{array}
\end{equation}
The integral $\smallint{\Gin}{} (\uB \cdot \bN) |\varrho,_k\!|^3 \dS$ for $k=1, 2$ (again, we show how the estimate is obtained in the case $\Omega = \bbT^2 \times (0,1)$, and the proof of the general case can be derived using partition-of-unity and straighten the inflow boundary, as in Section \ref{sec:bdy_integral_estimate1}) can be dominated by $C \|\rB\|^3_{L^3(\Gin)}$. For $k=3$, we use \eqref{normal_derivative_in_terms_of_tangential_derivative1} and the Jensen inequality to obtain that
\begin{align*}
- \int_{\Gin} (\uB \cdot \bN) &|\varrho,_3\!|^3 \dS \\&\preceq\int_{\Gin} \frac{1}{u_3^2} \Big(|\partial_t \rB| + |u_1| |\rB,_1| + |u_2| |\rB,_2| + \varrho |\div \u|\Big)^3 \dS \\
&\preceq \frac{16}{\underline{\u_{{}_B}^2}} \int_{\Gin} \Big(|\partial_t \rB|^3 + |u_1|^3 |\rB,_1|^3 + |u_2|^3 |\rB,_2|^3 + \varrho^3 |\div \u|^3\Big) \dS \\
&\preceq1 + \|\div \u\|^3_{L^3(\partial\Omega)}
\end{align*}
and \eqref{interpolation_ineq} (with $p=3, q=4$) further shows that
$$
- \int_{\Gin} (\uB \cdot \bN) |\varrho,_3\!|^3 \dS \preceq1 + \|\nabla \u\|^2_{L^4(\Omega)} \|\nabla \u\|_{H^1(\Omega)}\preceq1+\|\nabla \u\|^4_{L^4(\Omega)}
+ \|\u\|^2_{H^2(\Omega)}.
$$
Therefore, \eqref{Drho_LinftyL3_estimate_temp2} leads to the differential inequality
\begin{align*}
\frac{d}{dt}& \|\nabla \varrho\|^3_{L^3(\Omega)}\\ &\preceq \big(1 + \|\nabla \u\|_{L^\infty(\Omega)} + \|\vartheta\|^3_{L^\infty(\Omega)}\big) \|\nabla \varrho\|^3_{L^3(\Omega)} +\Big[1 + \|\f\|^2_{H^1(Q_T)} + \|g\|^2_{L^2(Q_T)} \Big]^3 \nonumber\\
&\quad +  \|\nabla \u\|^4_{L^4(\Omega)} + \|\u\|^2_{H^2(\Omega)} + \|\nabla \rD_t \u\|^2_{L^2(\Omega)} + \|\vartheta\|^2_{H^2(\Omega)} + \|\f\|^2_{H^1(\Omega)}.
\end{align*}
Since the assumption \eqref{L1Linfty_bound_for_grad_u} implies that
$$
1 + \|\nabla \u(\cdot)\|_{L^\infty(\Omega)} + \|\vartheta(\cdot)\|^3_{L^\infty(\Omega)} \in L^1(0,T),
$$
by the Gronwall inequality we conclude from \eqref{Du_L4_estimate}, \eqref{closed_estimate} and \eqref{utheta_L2H2_estimate} that
\begin{equation}\label{Drho_LinftyL3_estimate}
\|\nabla \varrho(t)\|_{L^3(\Omega)} \le C(\text{data}, \|\nabla \u\|_{L^1(0,T;L^\infty(\Omega))}, \|\vartheta\|_{L^{5+\delta}(0,T;L^\infty(\Omega))},\|\f\|_{H^1(Q_T)}, \|g\|_{L^2(Q_T)}).
\end{equation}

\subsection{Highest order estimates for \texorpdfstring{$\varrho$ and $\vartheta$}{ρ and θ}}\label{sec:even_higher_order_estimate}
We now enter the final stage of the continuation argument. The goal of this subsection is to close the highest-order norms needed to remain in the strong solution class up to time $T$. The most delicate part is the second-order estimate for the density, because differentiating the continuity equation twice produces boundary terms that are specific to the inflow-outflow setting. Once this difficulty is overcome, the highest-order estimates for $\u$ and $\vartheta$ follow from the
already established lower-order bounds together with elliptic and parabolic regularity.

\subsubsection{\texorpdfstring{$L^\infty_t H^2_x$}{LᵒᵒₜH²ₓ}-estimates for \texorpdfstring{$\varrho$}{ρ}}\label{sec:H2_estimate_of_rho}
Differentiate the equation of continuity \eqref{density_eq} w.r.t. $x_k$ and $x_j$
\begin{equation}\label{two_spatial_derivative_of_continuity_eq}
\varrho_t,_{kj} + \div (\varrho,_{kj} \u) + \div (\varrho,_k \u,_j) + \div (\varrho,_j \u,_k) + \div (\varrho \u,_{kj}) = 0
\end{equation}
and then testing \eqref{two_spatial_derivative_of_continuity_eq} against $\varrho,_{kj} \equiv \partial_{x_k} \partial_{x_j} \varrho$, we find that
\begin{align*}
\frac{1}{2} \frac{d}{dt}& \|\varrho,_{kj}\|^2_{L^2(\Omega)}\\ &= - \frac{1}{2} \int_{\partial\Omega} (\u \cdot \bN) |\varrho,_{kj}|^2 \dS + \frac{1}{2} \int_\Omega \div \u |\varrho,_{kj}|^2 \dx + 2 \int_\Omega (\u,_j \cdot \nabla \varrho,_k) \varrho,_{kj} \dx \\
&\quad + \int_\Omega (\u,_{kj} \cdot \nabla \varrho) \varrho,_{kj} \dx + \int_\Omega \varrho \varrho,_{kj} (\div \u),_{kj} \dx\pd
\end{align*}
It then follows from the continuous embedding $H^1(\Omega) \contsubset L^6(\Omega)$ and \eqref{Drho_LinftyL3_estimate} that
\begin{align}
& \frac{d}{dt} \|\varrho,_{kj}\|^2_{L^2(\Omega)}\preceq - \int_\Gin (\u \cdot \bN) |\varrho,_{kj}|^2 \dS  \nonumber\\
&\qquad\quad + \|\nabla\u\|_{L^\infty(\Omega)} \|\nabla^2 \varrho\|_{L^2(\Omega)}^2 + \|\nabla^2 \u\|_{L^6(\Omega)} \|\nabla^2 \varrho\|_{L^2(\Omega)} + \|\nabla^3 \u\|_{L^2(\Omega)} \|\nabla^2 \varrho\|_{L^2(\Omega)} \nonumber\\
&\qquad\preceq - \int_\Gin (\u \cdot \bN) |\varrho,_{kj}|^2 \dS +  \big(1 + \|\nabla \u\|_{L^\infty(\Omega)} \big) \|\nabla^2 \varrho\|^2_{L^2(\Omega)} + \|\u\|^2_{H^3(\Omega)}\label{D2rho_LinftyL2_estimate_temp}
\end{align}
for the constant depending on the data, $\|\u\|_{L^1(0,T;L^\infty(\Omega))}$, $\|\vartheta\|_{L^{5+\delta}(0,T;L^\infty(\Omega))}$,\\ $\|\f\|_{H^1(Q_T)}$ and $\|g\|_{L^2(Q_T)}$.

The boundary integral $- \smallint{\Gin}{} (\u \cdot \bN) |\varrho,_{jk}\!|^2\dS$\vspace{.1cm} is more complicated than those in Sections \ref{sec:H1_estimate_of_rho} and \ref{sec:L3_estimate_of_rho}. To keep the focus on the main estimate, we state the result
\begin{equation}\label{D2_inflow_boundary_estimate}
- \int_\Gin (\u \cdot \bN) |\nabla^2 \varrho|^2\dS \preceq1 + \|\nabla \u\|^4_{L^4(\Omega)} + \|\rD_t \u\|^2_{H^1(\Omega)} + \|\f\|^2_{H^1(Q_T)} + \|g\|^2_{L^2(Q_T)} + \|\u\|^2_{H^3(\Omega)}
\end{equation}
without proof here. The proof of \eqref{D2_inflow_boundary_estimate} is given in Section \ref{sec:bdy_integral_estimate2}.

Having estimate \eqref{D2_inflow_boundary_estimate}, we then can conclude from \eqref{D2rho_LinftyL2_estimate_temp} that
\begin{equation}\label{D2rho_L2_diff_ineq}
\frac{d}{dt} \|\nabla^2 \varrho\|^2_{L^2(\Omega)} \preceq1 + \|\nabla \u\|^4_{L^4(\Omega)} + \|\rD_t \u\|^2_{H^1(\Omega)} + \big(1 + \|\nabla \u\|_{L^\infty(\Omega)} \big) \|\nabla^2 \varrho\|^2_{L^2(\Omega)} + \|\u\|^2_{H^3(\Omega)}.
\end{equation}

Let us apply the elliptic estimate to the momentum equation \eqref{momentum_eq},
\begin{equation}\label{u_H3_estimate_temp}
\|\u\|_{H^3(\Omega)} \preceq\|\uB\|_{H^3(\Omega)} + \|\varrho \rD_t \u\|_{H^1(\Omega)} + \|\nabla p\|_{H^1(\Omega)} + \|\varrho \f\|_{H^1(\Omega)}.
\end{equation}
Note that
$$
\|\nabla p\|_{H^1(\Omega)} \preceq\|\vartheta\|_{L^\infty(\Omega)} \|\nabla^2 \varrho\|_{L^2(\Omega)} + \|\nabla \varrho\|_{L^3(\Omega)} \|\nabla \vartheta\|_{L^6(\Omega)} + \|\varrho\|_{L^\infty(\Omega)} \|\nabla^2 \vartheta\|_{L^2(\Omega)},
$$
and for any function $f$,
$$
\|\varrho f\|_{H^1(\Omega)}\preceq\|f\|_{H^1(\Omega)} + \|f\|_{L^6(\Omega)} \|\nabla \varrho\|_{L^3(\Omega)}.
$$
Using \eqref{Drho_LinftyL3_estimate} and the continuous embedding $H^1(\Omega) \contsubset L^6(\Omega)$, we have
\begin{equation}\label{prior_to_u_H3_estimate}
\|\varrho f\|_{H^1(\Omega)}
\preceq \|f\|_{H^1(\Omega)} \quad\text{ and }\quad \|\nabla p\|_{H^1(\Omega)} \preceq\|\vartheta\|_{L^\infty(\Omega)} \|\nabla^2 \varrho\|_{L^2(\Omega)} + \|\vartheta\|_{H^2(\Omega)}.
\end{equation}
Therefore, \eqref{u_H3_estimate_temp} yields
\begin{align}
\|\u\|_{H^3(\Omega)}
&\preceq1 + \|\rD_t \u\|_{H^1(\Omega)} + \|\vartheta\|_{H^2(\Omega)} + \|\f\|_{H^1(\Omega)} + \|\vartheta\|_{L^\infty(\Omega)} \|\nabla^2 \varrho\|_{L^2(\Omega)},\label{u_H3_estimate_temp2}
\end{align}
where the constant depends on the data, $\|\nabla \u\|_{L^1(0,T;L^\infty(\Omega))}$, $\|\vartheta\|_{L^{5+\delta}(0,T;L^\infty(\Omega))}$, \\ $\|\f\|_{H^1(Q_T)}$, and $\|g\|_{L^2(Q_T)}$.
Using \eqref{u_H3_estimate_temp2} in \eqref{D2rho_L2_diff_ineq}, we obtain that
\begin{align*}
\frac{d}{dt} \|\nabla^2 \varrho\|^2_{L^2(\Omega)} &\preceq1 + \|\f\|^2_{H^1(\Omega)} + \|\nabla \u\|^4_{L^4(\Omega)} + \|\rD_t \u\|^2_{H^1(\Omega)} + \|\vartheta\|^2_{H^2(\Omega)}  \\
&\qquad + \big(1 + \|\nabla \u\|_{L^\infty(\Omega)} + \|\vartheta\|^2_{L^\infty(\Omega)} \big) \|\nabla^2 \varrho\|^2_{L^2(\Omega)}.
\end{align*}
Using \eqref{closed_estimate}, we apply the Gronwall's inequality to conclude that
\begin{align*}
\|\nabla^2 \varrho&\|^2_{L^2(\Omega)} \\&\preceq\|\varrho_{{}_0}\|^2_{H^2(\Omega)} + \int_0^t \big(1 + \|\f\|^2_{H^1(\Omega)} + \|\nabla \u\|^4_{L^4(\Omega)} + \|\rD_t \u\|^2_{H^1(\Omega)} + \|\vartheta\|^2_{H^2(\Omega)}\big) dt' \\
&\preceq \mathrm{1}\Big(\text{data},\|\nabla \u\|_{L^1(0,T;L^\infty(\Omega))}, \|\vartheta\|_{L^{5+\delta}(0,T;L^\infty(\Omega))}, \|\f\|_{H^1(Q_T)}, \|g\|_{L^2(Q_T)}\Big).
\end{align*}
It then follows from \eqref{Linfty_bound_for_rho} that
\begin{align}\label{rho_LinftyH2_estimate}
\sup_{t\in [0,T]}& \|\varrho(t)\|_{H^2(\Omega)}\\& \preceq\mathrm{1}\Big(\text{data},\|\nabla \u\|_{L^1(0,T;L^\infty(\Omega))}, \|\vartheta\|_{L^{5+\delta}(0,T;L^\infty(\Omega))}, \|\f\|_{H^1(Q_T)}, \|g\|_{L^2(Q_T)}\Big).
\end{align}

\subsubsection{\texorpdfstring{$L^2_t H^3_x$}{L²ₜH³ₓ} and \texorpdfstring{$L^\infty_t H^2_x$}{LᵒᵒₜH²ₓ}-estimates for $\u$}
Now we proceed to the $L^2_t H^3_x$-estimate for $\u$.
Using \eqref{closed_estimate} and \eqref{u_H3_estimate_temp2}, we find that
\begin{align*}
\int_0^T &\|\u\|^2_{H^3(\Omega)} \dt\\
&\preceq \mathrm{1}\big(\text{data},\|\nabla \u\|_{L^1(0,T;L^\infty(\Omega))}, \|\vartheta\|_{L^{5+\delta}(0,T;L^\infty(\Omega))}, \|\f\|_{H^1(Q_T)}, \|g\|_{L^2(Q_T)}\big),
\end{align*}
and the momentum equation \eqref{momentum_eq} further implies that
\begin{align*}
\int_0^T &\|\varrho \rD_t \u\|^2_{H^1(\Omega)}\dt\\ &\preceq\int_0^T \Big[\|\Delta \u\|^2_{H^1(\Omega)} + \|\nabla p\|^2_{H^1(\Omega)} + \|\varrho \f\|^2_{H^1(\Omega)}\Big] dt  \nonumber\\
&\preceq\mathrm{1}\big(\text{data},\|\nabla \u\|_{L^1(0,T;L^\infty(\Omega))}, \|\vartheta\|_{L^{5+\delta}(0,T;L^\infty(\Omega))}, \|\f\|_{H^1(Q_T)}, \|g\|_{L^2(Q_T)}\big).
\end{align*}
As a consequence,
\begin{align*}
& \int_0^T \|\u_t\|^2_{H^1(\Omega)} \dt \preceq \int_0^T \Big[\|\rD_t \u\|^2_{H^1(\Omega)} + \|\u \cdot \nabla \u\|^2_{L^2(\Omega)}\Big] dt \nonumber\\
& \preceq \int_0^T \Big[\|\varrho \rD_t \u\|^2_{L^2(\Omega)} + \|\varrho \nabla \rD_t \u\|^2_{L^2(\Omega)}\Big] dt + C \int_0^T \Big[\|\nabla^2 \u\|^2_{L^2(\Omega)} + \|\nabla \u\|^4_{L^4(\Omega)}\Big] dt \nonumber\\
&\preceq\int_0^T \Big[\|\varrho \rD_t \u\|^2_{L^2(\Omega)} + \|\nabla (\varrho \rD_t \u) - \rD_t \u \otimes \nabla \varrho\|^2_{L^2(\Omega)} + \|\nabla^2 \u\|^2_{L^2(\Omega)} + \|\nabla \u\|^4_{L^4(\Omega)}\Big] dt \nonumber\\
& \preceq \int_0^T \Big[\|\varrho \rD_t \u\|^2_{H^1(\Omega)} + \|\rD_t \u\|^2_{L^6(\Omega)} \|\nabla \varrho\|^2_{L^3(\Omega)} + \|\nabla^2 \u\|^2_{L^2(\Omega)} + \|\nabla \u\|^4_{L^4(\Omega)}\Big] dt \nonumber\\
&\preceq\mathrm{1}\big(\text{data},\|\nabla \u\|_{L^1(0,T;L^\infty(\Omega))}, \|\vartheta\|_{L^{5+\delta}(0,T;L^\infty(\Omega))}, \|\f\|_{H^1(Q_T)}, \|g\|_{L^2(Q_T)}\big).
\end{align*}
By interpolation in time,
\begin{align*}
\sup_{t\in [0,T]}& \|\u(t)\|^2_{H^2(\Omega)}\\ &\preceq\|\u_{{}_0}\|^2_{H^2(\Omega)} + \|\u\|^2_{L^2(0,T;H^3(\Omega))} + \|\u_t\|^2_{L^2(0,T;H^1(\Omega))} \nonumber\\
&\preceq\mathrm{1}\big(\text{data},\|\nabla \u\|_{L^1(0,T;L^\infty(\Omega))}, \|\vartheta\|_{L^{5+\delta}(0,T;L^\infty(\Omega))}, \|\f\|_{H^1(Q_T)}, \|g\|_{L^2(Q_T)}\big),
\end{align*}
and using the momentum equation \eqref{momentum_eq} we also have
\begin{align*}
& \sup_{t\in [0,T]} \|\u_t(t)\|^2_{L^2(\Omega)} \nonumber\\
&\preceq\sup_{t\in[0,T]} \Big[\|\u(t)\|^2_{H^2(\Omega)} + \|\u(t)\|^2_{L^\infty(\Omega)} \|\nabla \u(t)\|^2_{L^2(\Omega)} + \|\nabla (\varrho \vartheta)(t)\|^2_{L^2(\Omega)} + \|\f(t)\|^2_{L^2(\Omega)} \Big]\nonumber\\
&\preceq\mathrm{1}\big(\text{data},\|\nabla \u\|_{L^1(0,T;L^\infty(\Omega))}, \|\vartheta\|_{L^{5+\delta}(0,T;L^\infty(\Omega))}, \|\f\|_{H^1(Q_T)}, \|g\|_{L^2(Q_T)}\big).
\end{align*}
Finally, all the estimates above yield the $L^2_tH^{-1}_x$-estimate for $\u_{tt}$. Differentiating the momentum equation \eqref{momentum_eq} in time and estimating in $H^{-1}(\Omega)$, we obtain
\begin{align*}
&\|\u_{tt}\|_{H^{-1}(\Omega)}
\preceq
\|\varrho_t\|_{H^1(\Omega)}
\Big(
\|\u_t\|_{L^2(\Omega)}
+\|\u\|_{L^\infty(\Omega)}\|\nabla\u\|_{L^2(\Omega)}
+\|\nabla(\varrho\vartheta)\|_{L^2(\Omega)}
\\
&\qquad+\|\Delta\u\|_{L^2(\Omega)}+\|\f\|_{L^2(\Omega)}
\Big)
+\|\u_t\|_{H^1(\Omega)}\|\u\|_{H^2(\Omega)}
+\|\u\|_{L^\infty(\Omega)}\|\nabla\u_t\|_{L^2(\Omega)}
 \\
&\qquad+\|\nabla(\varrho\vartheta)_t\|_{H^{-1}(\Omega)}
+\|\Delta\u_t\|_{H^{-1}(\Omega)}
+\|\f_t\|_{H^{-1}(\Omega)}.
\end{align*}
Here
\[
\|\nabla(\varrho\vartheta)_t\|_{H^{-1}(\Omega)}
\le \|(\varrho\vartheta)_t\|_{L^2(\Omega)}
\preceq
\|\varrho_t\|_{H^1(\Omega)}\|\vartheta\|_{H^1(\Omega)}
+\|\varrho\|_{L^\infty(\Omega)}\|\vartheta_t\|_{L^2(\Omega)},
\]
and
\[
\|\vartheta_t\|_{L^2(\Omega)}
\le \|\rD_t\vartheta\|_{L^2(\Omega)}
+\|\u\|_{L^\infty(\Omega)}\|\nabla\vartheta\|_{L^2(\Omega)}.
\]
Consequently,
\begin{align*}
\int_0^T \|\u_{tt}&\|^2_{H^{-1}(\Omega)} \dt\\& \preceq\mathrm{1}\big(\text{data},\|\nabla \u\|_{L^1(0,T;L^\infty(\Omega))}, \|\vartheta\|_{L^{5+\delta}(0,T;L^\infty(\Omega))}, \|\f\|_{H^1(Q_T)}, \|g\|_{L^2(Q_T)}\big).
\end{align*}
Combining all the estimates above, we conclude that
\begin{equation}\label{u_full_estimate}
\|\u\|^2_{X_T} \preceq\mathrm{\mathrm{1}}\big(\text{data},\|\nabla \u\|_{L^1(0,T;L^\infty(\Omega))}, \|\vartheta\|_{L^{5+\delta}(0,T;L^\infty(\Omega))}, \|\f\|_{H^1(Q_T)}, \|g\|_{L^2(Q_T)}\big).
\end{equation}
The estimate above also yields the $L^\infty_t H^1_x$-estimate for $\varrho_t$:
$$
\|\varrho_t\|_{H^1(\Omega)} \le \|\div (\varrho \u)\|_{H^1(\Omega)} \preceq \|\varrho\|_{H^2(\Omega)} \|\u\|_{H^2(\Omega)}
$$
so we have
\begin{align}\label{rhot_LinftyH1_estimate}
\sup_{t\in [0,T]} &\|\varrho_t(t)\|_{H^1(\Omega)} \\&\preceq\mathrm{1}\big(\text{data},\|\nabla \u\|_{L^1(0,T;L^\infty(\Omega))}, \|\vartheta\|_{L^{5+\delta}(0,T;L^\infty(\Omega))}, \|\f\|_{H^1(Q_T)}, \|g\|_{L^2(Q_T)}\big).
\end{align}

\subsubsection{\texorpdfstring{$L^\infty_t L^\infty_x$}{LᵒᵒₜLᵒᵒₓ} and \texorpdfstring{$L^2_t H^3_x$}{L²ₜH³ₓ}-estimates for \texorpdfstring{$\vartheta$}{θ}}
Having obtained \eqref{u_full_estimate}, by the Sobolev embedding $H^2(\Omega) \contsubset L^\infty(\Omega)$ we find that instead of $\nabla \u \in L^1(0,T;L^\infty(\Omega))$ we indeed have $\nabla \u \in L^2(0,T;L^\infty(\Omega))$ and we have
\begin{align*}
\int_0^T &\|\nabla \u\|^2_{L^\infty(\Omega)} \dt
\\&\preceq\mathrm{1}\big(\text{data},\|\nabla \u\|_{L^1(0,T;L^\infty(\Omega))}, \|\vartheta\|_{L^{5+\delta}(0,T;L^\infty(\Omega))}, \|\f\|_{H^1(Q_T)}, \|g\|_{L^2(Q_T)}\big).
\end{align*}
Following \cite{Hu2009}, by testing the temperature equation \eqref{temperature_eq} against $\vartheta^{q+1}$ we obtain that
\begin{align*}
\frac{1}{q+2} \frac{d}{dt} \int_\Omega \varrho\hpt\vartheta^{q+2} \dx &+ \frac{1}{q+2} \int_{\partial\Omega} \varrho (\u\cdot \bN) \vartheta^{q+2} \dS - \kappa \int_\Omega \Delta \vartheta \cdot \vartheta^{q+1} \dx \\
& + \int_\Omega \varrho\hpt\vartheta^{q+2} \div \u \dx= - \int_\Omega |\nabla \u|^2 \vartheta^{q+1} \dx + \int_\Omega \varrho\hpt g \vartheta^{q+1}\dx\pd
\end{align*}
Let $f(t) = \smallint{\Omega}{} \varrho\hpt\vartheta^{q+2}\dx$. Since the additional boundary integral can be estimated by
$$
- \smallint{\Gin}{} \varrho(\u\cdot \bN) |\vartheta|^{q+2} dS \preceq\|\rB\|_{L^\infty(\Omega)} \|\uB\|_{L^\infty(\Omega)} \|\tB\|_{L^\infty(\Omega)}^{q+2}\scl
$$
we have
\begin{equation}\label{differential_ineq_for_theta}
\frac{d}{dt} f(t) \preceq(q+2)\big(1 + \|\u\|^2_{L^\infty(\Omega)}\big) f(t) + \|\rB\|_{L^\infty(\Omega)} \|\uB\|_{L^\infty(\Omega)} \|\tB\|_{L^\infty(\Omega)}^{q+2}
\end{equation}
for the constant depending on $\Omega$ but independent of $q$. Therefore, the differential inequality \eqref{differential_ineq_for_theta} implies that
\begin{align*}
\frac{d}{dt}& \Big[e^{-C (q+2) \text{\tiny$\displaystyle{}\int_0^t$}\,(1 + \|\nabla \u\|^2_{L^\infty(\Omega)}) dt'} f(t)\Big] \\&\qquad\qquad\preceq e^{-C (q+2) \text{\tiny$\displaystyle{}\int_0^t$}\,(1 + \|\nabla \u\|^2_{L^\infty(\Omega)}) dt'}  \|\rB\|_{L^\infty(\Omega)} \|\uB\|_{L^\infty(\Omega)} \|\tB\|_{L^\infty(\Omega)}^{q+2}
\end{align*}
so
$$
f(t)^\frac{1}{q+2} \preceq e^{C \text{\tiny$\displaystyle{}\int_0^t$}\,(1 + \|\nabla \u\|^2_{L^\infty(\Omega)}) dt'} \Big[f(0)^\frac{1}{q+2} + \|\tB\|_{L^\infty(\Omega)} \Big(\|\rB\|_{L^\infty(\Omega)} \|\uB\|_{L^\infty(\Omega)}\Big)^\frac{1}{q+2}\Big].
$$
By Lemma \ref{lem:lower_upper_bound_for_rho}, it follows that
\begin{align*}
\min\big\{\rol,\rBl\big\}& \exp\Big(\!-\! \int_0^t \|\div \u\|_{L^\infty(\Omega)}\dt'\Big)\\& \le \varrho(t,x) \le \max\big\{\rou,\rBu\big\} \exp\Big(\int_0^t \|\div \u\|_{L^\infty(\Omega)}\dt'\Big)\cm
\end{align*}
so passing to the limit as $q \to \infty$, we conclude that
\begin{equation}\label{theta_Linfty_estimate}
\|\vartheta\|_{L^\infty(Q_T)} \preceq\|\vartheta_0\|_{L^\infty(\Omega)} + \|\tB\|_{L^\infty(\Omega)}
\end{equation}
for the constant depending on the data, $\|\nabla \u\|_{L^1(0,T;L^\infty(\Omega))}$, $\|\vartheta\|_{L^{5+\delta}(0,T;L^\infty(\Omega))}$,\\ $\|\f\|_{H^1(Q_T)}$, and $\|g\|_{L^2(Q_T)}$.

We also apply the parabolic estimate (Theorem \ref{thm:linear_parabolic_blackbox}) for the case $\v = \vartheta$, $f = -\div \u$ and $\sbF = |\nabla \u|^2 + \varrho g$ to obtain the full regularity for $\vartheta$. Clearly $f\in L^\infty(0,T;H^1(\Omega))$ with $f_t \in L^\infty(0,T;H^{-1}(\Omega))$ and we have
$$
\|f\|^2_{L^\infty(0,T;H^1(\Omega))} + \|f_t\|^2_{L^\infty(0,T;H^{-1}(\Omega))} \preceq \|\u\|^2_{X_T}.
$$
Moreover, $\|\sbF\|_{H^1(\Omega)} \preceq\|\u\|_{H^2(\Omega)} \|\u\|_{H^3(\Omega)} + \|g\|_{H^1(\Omega)}$ and
\begin{align*}
\|\sbF_t\|_{H^{-1}(\Omega)} &\preceq\|\nabla \u_t\|_{L^6(\Omega)} \|\nabla \u\|_{L^3(\Omega)} + \|\varrho_t\|_{L^6(\Omega)} \|g\|_{L^3(\Omega)} + \|g_t\|_{H^{-1}(\Omega)} \\
&\preceq\|\|\u\|_{H^2(\Omega)} \|\u_t\|_{H^2(\Omega)} + \|\varrho_t\|_{H^1(\Omega)} \|g\|_{H^1(\Omega)} + \|g_t\|_{H^{-1}(\Omega)}\\
\end{align*}
so we have
\begin{align*}
& \int_0^T \Big[\|\sbF\|^2_{H^1(\Omega)} + \|\sbF_t\|^2_{H^{-1}(\Omega)} \Big] dt \\
&\qquad \preceq1 + \|\u\|^4_{X_T} + \big(1 + \|\varrho_t\|^2_{L^\infty(0,T;H^1(\Omega))}\big) \int_0^T \big(\|g\|^2_{H^1(\Omega)} + \|g_t\|^2_{H^{-1}(\Omega)}\big) dt.
\end{align*}
Therefore, using \eqref{rhot_LinftyH1_estimate} we conclude from Theorem \ref{thm:linear_parabolic_blackbox} that $\vartheta$ satisfies
\begin{equation}\label{theta_full_estimate}
\|\vartheta\|^2_{X_T}\preceq\mathrm{1}\big(\text{data},\|\nabla \u\|_{L^1(0,T;L^\infty(\Omega))}, \|\vartheta\|_{L^{5+\delta}(0,T;L^\infty(\Omega))}, \|\f\|_{H^1(Q_T)}, \|g\|_{L^2(Q_T)}\big).
\end{equation}
This finishes all the required estimate and establishes Theorem \ref{thm:main}.

\appendix
\section{Estimates of \texorpdfstring{$\displaystyle{}\int_\Gin (\u\cdot \bN) |\nabla^\ell \varrho|^k dS$ for various $k$ and $\ell$}{boundary integrals}}\label{sec:bdy_integral_estimate}
\subsection{The case \texorpdfstring{$\ell=1$}{ℓ=1} given boundary data with good regularity}\label{sec:bdy_integral_estimate1}
In this part of the appendix, we give a detailed derivation of the following
\begin{theorem}\label{thm:rho_boundary_integral_estimate}
Let $\partial\Omega$ be of class ${\mathscr{C}}^3$, and $\varrho$, $\u$ satisfies the equation of continuity \eqref{density_eq} together with the boundary condition \eqref{boundary_condition2}. Then
\begin{equation}\label{rho_boundary_integral_estimate}
\begin{array}{l}
\displaystyle{}- \int_{\Gin} (\u \cdot \bN) |\nabla \varrho|^k \dS \vspace{.15cm}\\
\displaystyle{}\qquad \preceq\big(1 + \|\uB\|^k_{L^\infty(\Gin)}\big) \|\rB\|^k_{W^{1,k}(\bdy\Omega)} + \|\uB_t\|^k_{L^k(\bdy\Omega)} + \|\rB\|^k_{L^\infty(\Gin)} \|\div \u\|^k_{L^k(\Gin)}
\end{array}
\end{equation}
for the constant depending on $k$, $\uBl \equiv \inf\limits_{\Gin} (-\uB\cdot \bN)$ and $\Omega$.
\end{theorem}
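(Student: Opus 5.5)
We generalize the flat-case computation of Section~\ref{sec:H1_estimate_of_rho} by splitting $\nabla\varrho$ on $\Gin$ into tangential and normal parts. Since $\Gin$ is a compact ${\mathscr C}^3$ surface with unit normal $\bN$, at every point of $\Gin$ we write
\[
\nabla\varrho=\nabla_\tau\varrho+(\partial_{\bN}\varrho)\bN,
\qquad
\nabla_\tau\varrho=(I-\bN\otimes\bN)\nabla\varrho .
\]
The trace of $\varrho$ on $\Gin$ is $\rB$ by \eqref{boundary_condition2}, so $\nabla_\tau\varrho=\nabla_\tau\rB$ there. This identity needs only $\varrho$ continuous up to the boundary, which holds in the strong class. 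It is the intrinsic replacement for the statement ``$\varrho,_1,\varrho,_2$ are tangential derivatives of $\rB$'' used in the torus model. A partition of unity with boundary-straightening charts is not really needed, since the decomposition is global on $\Gin$. One could use charts if one prefers the flat-case formulas; the constant then depends on $\Omega$ through the ${\mathscr C}^3$ norm of the charts.

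For the normal part we solve the continuity equation for $\partial_{\bN}\varrho$, as in \eqref{normal_derivative_in_terms_of_tangential_derivative1}. On $\Gin$ we have $\u=\uB$, and we decompose $\uB=\uB^\tau+(\uB\cdot\bN)\bN$. Writing \eqref{density_eq} in non-divergence form, $\varrho_t+\u\cdot\nabla\varrho+\varrho\div\u=0$, and restricting to $\Gin$ gives
\[
\partial_{\bN}\varrho=-\frac{1}{\uB\cdot\bN}\Big(\partial_t\rB+\uB^\tau\cdot\nabla_\tau\rB+\rB\,\div\u\Big)\qquad\text{on }\Gin .
\]
The division is legitimate because $|\uB\cdot\bN|\ge\uBl>0$ on $\Gin$. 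Using $|\nabla\varrho|^k\le 2^{k-1}\big(|\nabla_\tau\rB|^k+|\partial_{\bN}\varrho|^k\big)$ and $0<-\uB\cdot\bN\le\|\uB\|_{L^\infty(\Gin)}$, the tangential contribution is bounded by $\|\uB\|_{L^\infty(\Gin)}\|\rB\|^k_{W^{1,k}}$. For the normal contribution we get
\[
-\int_{\Gin}(\uB\cdot\bN)|\partial_{\bN}\varrho|^k\dS
\le \uBl^{\,1-k}\,3^{k-1}\!\int_{\Gin}\Big(|\partial_t\rB|^k+|\uB|^k|\nabla_\tau\rB|^k+\rB^k|\div\u|^k\Big)\dS .
\]
This is controlled by $\|\partial_t\rB\|^k_{L^k}+\|\uB\|^k_{L^\infty(\Gin)}\|\rB\|^k_{W^{1,k}}+\|\rB\|^k_{L^\infty(\Gin)}\|\div\u\|^k_{L^k(\Gin)}$. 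Collecting terms and using $\|\uB\|_{L^\infty}\le 1+\|\uB\|^k_{L^\infty}$ gives \eqref{rho_boundary_integral_estimate}, with constant depending on $k$, $\uBl$ and, through any chart-based version, $\Omega$.

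One term does not match as displayed. The time-derivative term is naturally $\|\partial_t\rB\|^k_{L^k(\Gin)}$, whereas the statement writes $\|\uB_t\|^k_{L^k}$. I would treat this as the intended $\partial_t\rB$, consistent with the flat computation, or note that it vanishes in the time-independent setting of Theorem~\ref{thm:main2}.

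The main obstacle is justifying the pointwise boundary identities. The trace of $\nabla\varrho$ on $\Gin$ and the evaluation of the continuity equation there require $\varrho\in H^2$, so that $\nabla\varrho$ has an $L^2$, indeed $L^4$, trace. This regularity is available for the strong solutions of Theorem~\ref{thm:main}. If it is not available, I would first prove the identity for smooth approximations of $\varrho$ and then pass to the limit in the trace norms.
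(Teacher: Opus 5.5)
Your proof is correct and follows the paper's argument: on $\Gin$ the tangential derivatives of $\varrho$ are those of $\rB$, and the normal derivative is solved from the continuity equation using $-\uB\cdot\bN\ge\uBl>0$. The one difference is that your global orthogonal splitting $\nabla\varrho=\nabla_\tau\varrho+(\partial_{\bN}\varrho)\bN$ replaces the paper's partition of unity and straightened charts; it gives $|\nabla\varrho|^2=|\nabla_\tau\rB|^2+|\partial_{\bN}\varrho|^2$ exactly, which avoids the cross term $I_2$ and the metric factors $A^r_jA^s_j$ of the chart computation.

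You are also right about the time-derivative term. The paper's own pointwise bound \eqref{rho,m_on_boundary} involves $|\rB_t|$, so $\|\uB_t\|^k_{L^k}$ in the statement should read $\|\partial_t\rB\|^k_{L^k(\Gin)}$.
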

\begin{proof}
By our assumption on $\partial\Omega$, $\Gin$ is a compact manifold without boundary. Therefore, for there exist a collection of open sets $\{\U_m\}_{m=0}^M$ with each $\U_m \subseteq  \bbR^\n$, a collection of ${\mathscr{C}}^3$-maps $\{\phi_m\}_{m=1}^M$, and a collection of positive numbers $\{r_m\}_{m=1}^M$ such that
$$
\Gin \subseteq \bigcup\limits_{m=1}^M \U_m\,,
$$
and for each $1 \le m\le M$,
\begin{enumerate}
\item $\phi_m: B(0, r_m) \to \U_m \text{ is a } {\mathscr{C}}^3\text{-diffeomorphism}$; that is, $\phi_m: B(0, r_m) \to \U_m$ is bijective, $\phi_m \in {\mathscr{C}}^3(B(0,r_m))$, and $\phi_m^{-1} \in {\mathscr{C}}^3(\U_m)$;
\item $\phi_m: B(0,r_m) \cap \{y_n=0\} \to \U_m \cap \bdy\Omega $;
\item $\phi_m: B^+_m \equiv B(0,r_m) \cap \{y_\n>0\} \to \U_m \cap \Omega $;
\item $\det (\nabla \phi_m) =1 $.
\end{enumerate}
Let $\{\zeta_m\}_{m=1}^M$ be a partition-of-unity of $\Gin$ subordinate to $\{\U_m\}_{m=1}^M$. Then
$$
\int_\Gin (\u\cdot \bN) |\nabla \varrho|^k \dS = \sum_{m=1}^M \int_\Gin \zeta_m (\u\cdot \bN) |\nabla \varrho|^k \dS = \sum_{m=1}^M \int_{\U_m \cap \Gin} \zeta_m (\u\cdot \bN) |\nabla \varrho|^k \dS\pd
$$
For each $m$, let $\v_m = \u \circ \phi_m$, $\rho_m = \varrho \circ \phi_m$, where the composition is done in the spatial variables; that is,
$$
\v_m(\y,t) = \u(\phi_m(\y),t)\cm\qquad \rho_m(\y,t) = \varrho(\phi_m(\y),t)\pd
$$
Define the $\rg_m = \|\phi_m,_1 \times \phi_m,_2\|^2$ and $A = (\nabla \phi_m)^{-1}$. Then
$$
\|A\|_{L^\infty(B(0,r_m))} < \infty \qquad\text{and}\qquad \|\rg_m\|_{L^\infty(B(0,r_m) \cap \{y_\n = 0\})} < \infty\pd
$$
Since $\bN\circ \phi_m = A^\top \bfe_\n$, we have
\begin{align*}
& - \int_{\U_m \cap \Gin} \zeta_m (\u\cdot \bN) |\nabla \varrho|^k \dS \\
& = - \sum_{i=1}^\n \int_{B(0,r_m) \cap \{y_n = 0\}} (\zeta_m \circ \phi_m) \v_m^i A^\n_i \left(\sum_{j,r,s=1}^\n A_j^r A_j^s \rho_m,_r \rho_m,_s\right)^{\!\frac{k}{2}} \sqrt{\rg_m} \dy' \\
& \preceq - 3^{\frac{k}{2}-1} \sum_{i,j=1}^\n \int_{B(0,r_m) \cap \{y_\n = 0\}} (\zeta_m \circ \phi_m) \v_m^i A^\n_i \left(\sum_{r,s=1}^\n A_j^r A_j^s \rho_m,_r \rho_m,_s\right)^{\!\frac{k}{2}} \sqrt{\rg_m} \dy'\cm
\end{align*}
where $dy' = dy_1 dy_2$. When $r \ne \n$, we have $\rho_m,_r = (\rB \circ \phi_m),_r$, so the Jensen inequality implies that
\begin{alignat*}{2}
& - \int_{B(0,r_m) \cap \{y_n = 0\}} (\zeta_m \circ \phi_m) \v_m^i A^\n_i \left(\sum_{r,s=1}^\n A_j^r A_j^s \rho_m,_r \rho_m,_s\right)^{\!\frac{k}{2}} \sqrt{\rg_m} \dy' \\
&\qquad \preceq - 3^{\frac{k}{2}-1} \int_{B(0,r_m) \cap \{y_n = 0\}} (\zeta_m \circ \phi_m) \v_m^i A^\n_i \left(\sum_{r,s=1}^2 A_j^r A_j^s \rho_m,_r \rho_m,_s\right)^{\!\frac{k}{2}} \sqrt{\rg_m} \dy'  \\
&\qquad\quad - 3^{\frac{k}{2}-1} \int_{B(0,r_m) \cap \{y_n = 0\}} (\zeta_m \circ \phi_m) \v_m^i A^\n_i \left(\sum_{r=1}^2 A_j^r A_j^3 \rho_m,_r \rho_m,_3\right)^{\!\frac{k}{2}} \sqrt{\rg_m} \dy' ) \\
&\qquad\quad - 3^{\frac{k}{2}-1} \int_{B(0,r_m) \cap \{y_n = 0\}} (\zeta_m \circ \phi_m) \v_m^i A^\n_i \left(A_j^3 A_j^3 \rho_m,_3 \rho_m,_3\right)^{\!\frac{k}{2}} \sqrt{\rg_m} \dy' \\
&\qquad:=\sum_{j=1}^3 I_j.
\end{alignat*}
We remark here that for the case $k=2$ the estimates of $I_1$ and $I_3$ corresponds to the estimates for the integral
$$
\smallint{\bbT^2\times \{x_\n = 1\}}{} (\u\cdot \bN) |\varrho,_k\!|^2 dS, \qquad k =1,2
$$
and the integral
$$
\smallint{\bbT^2\times \{x_\n = 1\}}{} (\u\cdot \bN) |\varrho,_\n\!|^2 dS
$$
in Section \ref{sec:H1_estimate_of_rho}, respectively. In the general case, there is one additional term, the cross term $I_2$, that is required to be estimated.

Since $\varrho$ satisfy the equation of continuity \eqref{density_eq}, we find that $\rho_m$ satisfies
$$
\partial_t \rho_m + \rho_m (\div \u)\circ\phi_m + \sum_{j,k=1}^3 \v_m^j A_j^k \rho_m,_r  = 0 \qquad\text{in}\quad B(0,r_m) \scl
$$
thus on $B(0,r_m) \cap \{y_\n = 0\}$,
\begin{align*}
\rho_m,_\n \hspace{-7pt}&= - \frac{1}{\sum\limits_{j=1}^3 \v_m^j A_j^3} \Big(\partial_t \rho_m + \rho_m (\div \u)\circ\phi_m + \sum_{j=1}^3 \sum_{k=1}^2 \v_m^j A_j^k \rho_m,_r\Big) \vspace{-.2cm}\\
&=- \frac{1}{\sum\limits_{j=1}^3 \v_m^j A_j^3} \Big[(\rB_t \circ\phi_m) + (\rB\circ \phi_m) (\div \u)\circ\phi_m \\&\qquad\qquad+ \sum_{j=1}^3 \sum_{k=1}^2 (\uB \circ \phi_m)^j A_j^k (\rB \circ \phi_m),_r \Big],
\end{align*}

here we have used the fact that $\sum\limits_{j=1}^\n \v_m^j A^\n_j = (\u \cdot \bN) \cdot \phi_m \le - \uBl < 0$ to show the identity.
Therefore, using the fact that
\begin{equation}\label{rho,m_on_boundary}
|\rho_m,_\n\!|  \preceq\Big[|\rB_t| + |\uB||\nabla \rB \times \bN| + |\rB| |(\div \u)|\Big]\circ \phi_m \qquad\text{on}\quad B(0,r_m) \cap \{y_\n =0\}
\end{equation}
for the constant depending on $\uBl$, where $\nabla \rB \times \bN$ denotes the tangential derivative of $\rB$. By H\"older's inequality,
\begin{align*}
\rI_1 + \rI_2 + \rI_3 &\preceq \sum_{r=1}^\n \int_{B(0,r_m) \cap \{y_\n = 0\}} \|\uB\|_{L^\infty(\Gin)} \big|\varrho_m,_r\!\big|^k \dS \\
&\preceq \|\uB\|_{L^\infty(\Gin)}  \Big[\|\rB\|^k_{W^{1,k}(\Gin)} + \big\|\varrho_m,_\n\big\|^k_{L^k(B(0,r_m) \cap \{y_\n = 0\})}\Big],
\end{align*}
and the desire estimate \eqref{rho_boundary_integral_estimate} following from \eqref{rho,m_on_boundary}.
\end{proof}

\subsection{The case \texorpdfstring{$k=\ell=2$}{k=ℓ=2} given estimates from Section \ref{sec:L2H2_estimate_of_utheta}}\label{sec:bdy_integral_estimate2}
We only demonstrate the idea by setting $\Omega = \bbT^2 \times (0,1)$, and the general estimate is done by partition-of-unity and straighten the inflow boundary.\vspace{.1cm}

The estimate of $- \smallint{\Gin}{} (\u \cdot \bN) |\varrho,_{jk}\!|^2\dS$ is proceeded as follows. First we note that for $j,k \ne 3$,\vspace{-.1cm}
\begin{equation}\label{D2_inflow_boundary_estimate_temp1}
- \int_\Gin (\u \cdot \bN) |\varrho,_{jk}\!|^2\dS \le \uBu \|\rB\|^2_{H^2(\Gin)}.
\end{equation}
Suppose that $j = 3$, we differentiate the equation \eqref{normal_derivative_in_terms_of_tangential_derivative1} w.r.t. $x_k$ and obtain
\begin{equation}\label{two_spatial_derivative_of_rho}
\begin{array}{rl}
\varrho,_{3k}
\hspace{-7pt}&=\displaystyle{} - \frac{1}{u_3} \big(\partial_t \varrho,_k + u_1,_k \varrho,_{1} + u_2,_k \varrho,_{2} + \varrho,_k \div \u + u_3 \varrho,_{1k} + u_2 \varrho,_{2k} + \varrho\, \div \u,_k \big) \vspace{.1cm}\\
&\quad\displaystyle{} + \frac{u_3,_k}{u_3^2} \big(\partial_t \varrho + u_3 \varrho,_{1} + u_2 \varrho,_{2} + \varrho\, \div \u\big).
\end{array}
\end{equation}
Therefore, for $k=1,2$ we have
\begin{align*}
& - \int_\Gin (\u \cdot \bN) |\varrho,_{3k}\!|^2\dS \\
&\qquad \le - \int_\Gin \frac{2}{u_3} \big(\partial_t \varrho,_k + u_1,_k \varrho,_{1} + u_2,_k \varrho,_{2} + \varrho,_k \div \u + u_3 \varrho,_{1k} + u_2 \varrho,_{2k} + \varrho\, \div \u,_k \big)^2 dS \\
&\qquad\quad - \int_\Gin \frac{2 |u_3,_k\!|^2}{u^3_3} \big(\partial_t \varrho + u_3 \varrho,_{1} + u_2 \varrho,_{2} + \varrho\, \div \u\big)^2 dS
\end{align*}
and the Jensen inequality further shows that
\begin{align*}
& - \int_\Gin (\u \cdot \bN) |\varrho,_{3k}\!|^2\dS \\
&\qquad \preceq- \int_\Gin \frac{14}{u_3} \Big[|\partial_t \varrho,_k\!|^2 + |u_1,_k\!|^2 |\varrho,_{1}|^2 + |u_2,_k\!|^2 |\varrho,_{2}|^2 + |\varrho,_k\!|^2 |\div \u|^2 + |u_3|^2 |\varrho,_{1k}|^2 \\
&\qquad\qquad\qquad\quad + |u_2|^2 |\varrho,_{2k}|^2 + |\varrho|^2 |\div \u,_k\!|^2 \Big] dS \\
&\qquad\quad - \int_\Gin \frac{8 |u_3,_k\!|^2}{u^3_3} \Big[|\partial_t \varrho|^2 + |u_3|^2 |\varrho,_{1}|^2 + |u_2|^2 |\varrho,_{2}|^2 + |\varrho|^2 |\div \u|^2 \Big] dS\\
&\qquad \preceq \frac{14}{\uBl} \Big(\|\partial_t \rB\|^2_{H^1(\Gin)} + 2 \|\uB\|^2_{W^{1,4}(\Gin)} \|\rB\|^2_{W^{1,4}(\Gin)} + 2\u^2_{{}_B} \|\rB\|^2_{H^2(\Gin)}  \\&\qquad\qquad\qquad\qquad + \rBu^2 \|\nabla^2 \u\|^2_{L^2(\partial\Omega)} \Big)\\
&\qquad\qquad + \frac{8}{\underline{\u^3_{{}_B}}} \|\uB\|^2_{W^{1,4}(\Gin)} \Big(\|\partial_t \rB\|^2_{L^4(\Gin)} + 2 \|\uB\|^2_{L^\infty(\Gin)} \|\rB\|^2_{W^{1,4}(\Gin)}\\& \qquad\qquad\qquad\qquad+ \rBu^2 \|\div \u\|^2_{L^4(\partial\Omega)}\Big)
\end{align*}
and \eqref{interpolation_ineq} further implies that for $k=1,2$ and any $\varepsilon > 0$,
\begin{align}
- \int_\Gin (\u \cdot \bN) |\varrho,_{3k}\!|^2\dS &\preceq1 + \|\nabla^2 \u\|^2_{L^2(\partial\Omega)}
\preceq1 + \|\nabla^2 \u\|_{L^2(\Omega)} \|\nabla^2 \u\|_{H^1(\Omega)} \nonumber\\
&\preceq1 + \|\nabla\u\|^\frac{1}{2}_{L^2(\Omega)} \|\u\|^\frac{3}{2}_{H^3(\Omega)}\nonumber\\&\preceq1 + \|\nabla \u\|^2_{L^2(\Omega)} + \|\u\|^2_{H^3(\Omega)} \nonumber\\
&\preceq1 + \|\f\|^2_{H^1(Q_T)} + \|g\|^2_{L^2(Q_T)} + \|\u\|^2_{H^3(\Omega)} \label{D2_inflow_boundary_estimate_temp2}
\end{align}
for some constant $C$ depending on data.\vspace{.1cm}

We now estimate $- \smallint{\Gin}{} (\u \cdot \bN) |\varrho,_{33}\!|^2\dS$. Using \eqref{two_spatial_derivative_of_rho} and the Jensen inequality,
\begin{align*}
& - \int_\Gin (\u \cdot \bN) |\varrho,_{33}\!|^2\dS \\
&\preceq - \int_\Gin \frac{2 (\u \cdot \bN)}{u^2_3} \big(\partial_t \varrho,_3 + u_1,_3 \varrho,_{1} + u_2,_3 \varrho,_{2} + \varrho,_3 \div \u + u_1 \varrho,_{13} + u_2 \varrho,_{23} \\
&\qquad\quad+ \varrho\, \div \u,_3 \big)^2 dS  - \int_\Gin \frac{2 (\u \cdot \bN) |u_3,_3|^2}{u^4_3} \big(\partial_t \varrho + u_1 \varrho,_{1} + u_2 \varrho,_{2} + \varrho\, \div \u\big)^2 dS\\
&\preceq- \frac{14}{\underline{\u^2_{{}_B}}} \int_\Gin (\u \cdot \bN) \Big[|\partial_t \varrho,_3|^2 + |\varrho,_3|^2 |\div \u|^2 + |u_1|^2 |\varrho,_{13}|^2 + |u_2|^2 |\varrho,_{23}|^2\Big] dS \\
&\qquad+ \frac{14}{\uBl} \Big(2 \|\nabla \u\|^2_{L^4(\partial\Omega)} \|\rB\|^2_{W^{1,4}(\Gin)} + \rBu^2 \|\nabla^2 \u\|^2_{L^2(\partial\Omega)} \Big) \\
&\qquad + \frac{8 \|\nabla \u\|^2_{L^4(\partial\Omega)}}{\underline{\u^3_{{}_B}}} \Big(\|\partial_t \rB\|^2_{L^4(\Gin)} + 2 \|\uB\|^2_{L^\infty(\Gin)} \|\rB\|^2_{W^{1,4}(\Gin)} + \rBu^2 \|\div\u\|^2_{L^4(\partial\Omega)}\Big) \\
&\preceq- \frac{14}{\underline{\u^2_{{}_B}}} \int_\Gin (\u \cdot \bN) \Big[|\partial_t \varrho,_3|^2 + |\varrho,_3|^2 |\div \u|^2 + \uBu^2 |\varrho,_{13}|^2 + \uBu^2 |\varrho,_{23}|^2\Big] dS \\
&\qquad\quad + 1 + \|\nabla \u\|^4_{L^4(\partial\Omega)}.
\end{align*}
Using \eqref{normal_derivative_in_terms_of_tangential_derivative1},
\begin{align*}
\partial_t \varrho,_3 &= \frac{1}{u_3} \big(\varrho_{tt} + u_{1t} \varrho,_{1} + u_{2t} \varrho,_{2} + \varrho_t\, \div \u + u_1 \varrho_t,_{1} + u_2 \varrho_t,_{2} + \varrho\, \div \u_t \big) \\
&\quad - \frac{u_{3t}}{u_3^2} \big(\varrho_t + u_1 \varrho,_{1} + u_2 \varrho,_{2} + \varrho\, \div \u\big)\scl
\end{align*}
thus the fact that $\partial_t \uB = {\bf 0}$
shows that
\begin{align}
& -\int_\Gin (\u \cdot \bN) |\partial_t \varrho,_3|^2 dS = -\int_\Gin \frac{1}{u_3} \big(\varrho_{tt} + \varrho_t\, \div \u + u_1 \varrho_t,_{1} + u_2 \varrho_t,_{2} + \varrho\, \div \u_t \big)^2 dS \nonumber\\
&\qquad \preceq - \frac{5}{\uBu} \int_\Gin \big(|\rB_{tt}|^2 + |\rB_t|^2 |\div \u|^2 + |u_1|^2 |\rB_t,_1|^2 + |u_2|^2 |\rB_t,_2|^2  \nonumber\\
&\qquad\qquad\qquad+  \rBu^2 |\div \u_t|^2 \big) dS\nonumber\\
&\qquad\preceq1 + \|\div \u\|^2_{L^2(\partial\Omega)} + \|\div \u_t\|^2_{L^2(\partial\Omega)} \nonumber\\
&\qquad \preceq1 + \|\div \u\|_{L^2(\Omega)} \|\div \u\|_{H^1(\Omega)} + \|\div \u_t\|_{L^2(\Omega)} \|\div \u_t\|_{H^1(\Omega)}\nonumber\\
&\qquad \preceq1 + \|\u\|^2_{H^2(\Omega)} + \|\nabla \u\|^4_{L^4(\Omega)} + \|\rD_t \u\|^2_{H^1(\Omega)}, \label{D2_inflow_boundary_estimate_temp3}
\end{align}
where we have used
$$
\|\u_t\|_{H^1(\Omega)} \preceq \|\rD_t \u\|_{H^1(\Omega)} + \|\u\|_{L^\infty(\Omega)} \|\u\|_{H^2(\Omega)} + \|\nabla \u\|^2_{L^4(\Omega)}
$$
to deduct the last inequality.
Moreover, \eqref{interpolation_ineq} implies that
\begin{align}
& -\int_\Gin (\u \cdot \bN) |\varrho,_3|^2 |\div \u|^2 \dS\nonumber\\&\qquad = - \int_\Gin (\u \cdot \bN) \Big[\frac{1}{u_3^2} \big(\partial_t \varrho + u_1 \varrho,_{1} + u_2 \varrho,_{2} + \varrho\, \div \u\big)^2 |\div \u|^2\Big] dS \nonumber\\
&\qquad\preceq - \frac{4}{\uBl} \int_\Gin \Big[|\rB_t|^2 + \uBu^2 \big(|\rB,_{1}|^2 + |\rB,_{2}|^2\big) + |\rB|^2 |\div \u|^2\Big] |\div \u|^2 dS \nonumber\\
&\qquad \preceq1 + \|\nabla \u\|^4_{L^4(\partial\Omega)}.
\label{D2_inflow_boundary_estimate_temp4}
\end{align}
Therefore, using \eqref{D2_inflow_boundary_estimate_temp2}, \eqref{D2_inflow_boundary_estimate_temp3} and \eqref{D2_inflow_boundary_estimate_temp4} we conclude that
\begin{align*}
- \int_\Gin (\u \cdot \bN) |\varrho,_{33}\!|^2\dS &\preceq1 + \|\nabla \u\|^4_{L^4(\Omega)} + \|\rD_t \u\|^2_{H^1(\Omega)} + \|\u\|^2_{H^2(\Omega)} + \|\nabla \u\|^4_{L^4(\partial\Omega)} \\
&\qquad + \|\f\|^2_{H^1(Q_T)} + \|g\|^2_{L^2(Q_T)} + \|\u\|^2_{H^3(\Omega)}.
\end{align*}
By the continuous embedding $H^1(\Omega) \contsubset L^4(\partial\Omega)$ and interpolation,
$$
\|\nabla \u\|^4_{L^4(\partial\Omega)} \preceq \|\nabla \u\|^4_{H^1(\Omega)} \preceq \|\nabla \u\|^2_{L^2(\Omega)} \|\nabla \u\|^2_{H^2(\Omega)} \cm
$$
so using \eqref{closed_estimate} we obtain that
\begin{align}\label{D2_inflow_boundary_estimate_temp5}
&- \int_\Gin (\u \cdot \bN) |\varrho,_{33}\!|^2\dS\nonumber\\&\qquad\qquad \preceq1 + \|\nabla \u\|^4_{L^4(\Omega)} + \|\rD_t \u\|^2_{H^1(\Omega)} + \|\f\|^2_{H^1(Q_T)} + \|g\|^2_{L^2(Q_T)} + \|\u\|^2_{H^3(\Omega)} .
\end{align}
Combining \eqref{D2_inflow_boundary_estimate_temp1}, \eqref{D2_inflow_boundary_estimate_temp2} and \eqref{D2_inflow_boundary_estimate_temp5}, we conclude \eqref{D2_inflow_boundary_estimate}.

\section{Proof of the boundary-adapted material-derivative estimate}
\label{app:linearized_material_derivative_estimate}
This appendix records the algebra leading to Lemma~\ref{lem:linearized_material_derivative_estimate}.  It is kept outside the main text because it consists mainly of integrations by parts and product estimates.

\subsection{Derivation of the estimate}
Suppose that $\v \in X_T$ satisfies
\begin{subequations}\label{parabolic_eq_w_inhom_bdy_condition_with_f}
\begin{alignat}{2}
\brho \bD_t \v &= \nu \Delta \v + \sbF \qquad&&\text{in}\quad\Omega\times (0,T), \label{parabolic_eq_w_inhom_bdy_condition_with_f.1}\\
\v &= \v_0 &&\text{on}\quad\Omega \times \{t=0\},\\
\v &= \sbV &&\text{on}\quad\bdy\Omega \times (0,T),
\end{alignat}
\end{subequations}
where $\sbF$ and $\sbV$ have the regularity described in
Theorem~\ref{thm:linear_parabolic_blackbox}.
Then $\v$ also satisfies
\begin{align*}
\brho \bD_t^2 \v + [\brho_t + \div (\brho \bu)\big] \bD_t \v = \nu \Delta \v_t + \nu \div (\Delta \v \otimes \bu) + \bD_t \sbF + (\div \bu) \sbF
\end{align*}
in the $H^{-1}$-sense.
To see this, we need to show that
\begin{equation}\label{linear_Hopf_eq_weak_form}
\begin{array}{rl}
\langle \bD_t^2 \v, \brho \Varphi\rangle \hspace{-7pt}&+\displaystyle{} \int_\Omega \big[\brho_t + \div (\brho \bu)\big] (\bD_t \v \cdot \Varphi) dx + \nu \int_\Omega \nabla \v_t : \nabla \Varphi \dx \vspace{.15cm}\\
&=\displaystyle{} \nu \int_\Omega \div (\Delta \v \otimes \bu) \cdot \Varphi \dx + \int_\Omega (\div \bu) \sbF \cdot \Varphi\dx + \langle \bD_t\sbF, \Varphi\rangle
\end{array}
\end{equation}
for any $ \Varphi\in H^1_0(\Omega),$
where
$\bD_t^2 \v$ and $\bD_t \sbF$ are bounded linear functional on $H^1_0(\Omega)$ defined by
\begin{align}
&\langle \bD_t^2 \v, \Varphi\rangle = \langle \v_{tt}, \Varphi \rangle + \int_\Omega (\bu\cdot\nabla \v_t) \cdot \Varphi \dx + \int_\Omega \big[\bD_t (\bu \cdot \nabla \v)\big]\cdot \Varphi \dx \nonumber\\
&= \langle \v_{tt}, \Varphi \rangle + \int_\Omega (\bu\cdot\nabla \v_t) \cdot \Varphi \dx + \int_\Omega \brho (\bu_t \cdot \nabla \v) \cdot \Varphi \dx + \int_\Omega \brho (\bu \cdot \nabla \bD_t \v) \cdot \Varphi \dx \label{defn:Dt2v}
\end{align}
and
$$
\langle \bD_t\sbF, \Varphi\rangle = \langle \sbF_t, \Varphi\rangle + \int_\Omega (\bu \cdot \nabla \sbF) \cdot \Varphi\dx\pd
$$
The starting point is the weak formulation
\begin{equation}\label{time_derivative_of_weak_form}
\langle \v_{tt}, \brho \Varphi\rangle + \int_\Omega \brho_t \v_t \cdot \Varphi \dx + \int_\Omega (\brho \bu \cdot \nabla \v)_t \cdot \Varphi \dx = \nu \int_\Omega \nabla \v_t : \nabla \Varphi \dx + \langle \sbF_t, \Varphi\rangle
\end{equation}
for any $ \Varphi \in H^1_0(\Omega)$,
which is a direct consequence of \eqref{eq:linear_parabolic_weak}, applied to \eqref{parabolic_eq_w_inhom_bdy_condition_with_f} with $a_s=0$. From \eqref{time_derivative_of_weak_form}, to establish \eqref{linear_Hopf_eq_weak_form} we note that both sides of \eqref{parabolic_eq_w_inhom_bdy_condition_with_f.1} belongs to $H^1(\Omega)$ so we are allowed to take the spatial weak derivative and obtain that
$$
\bu \cdot \nabla (\brho \bD_t \v) = \nu \bu \cdot \nabla \Delta \v + \bu \cdot \nabla \sbF\qquad \text{in}\quad L^2(\Omega), \text{ for a.a. $t\in (0,T)$}\scl
$$
thus
$$
\int_\Omega \big[\bu \cdot \nabla (\brho \bD_t \v)\big]\cdot \Varphi \dx = \nu \int_\Omega (\bu \cdot \nabla \Delta \v) \cdot \Varphi \dx + \int_\Omega (\bu \cdot \nabla \sbF)\cdot \Varphi \dx \qquad\Forall \Varphi\in H^1_0(\Omega).
$$
Moreover,
$$
(\div \bu) \brho \bD_t \v = \nu (\div \bu)\Delta \v + (\div \bu) \sbF \qquad \text{in}\quad H^1(\Omega), \text{ for a.a. $t\in (0,T)$}\scl
$$
thus
$$
\int_\Omega (\div \bu) \brho \bD_t \v \cdot \Varphi \dx = \nu \int_\Omega (\div \bu) \Delta \v \cdot \Varphi \dx + \int_\Omega (\div \bu) \sbF \cdot \Varphi \dx \qquad\Forall \Varphi\in H^1_0(\Omega).
$$
Adding the two integral identities to \eqref{time_derivative_of_weak_form} and using \eqref{defn:Dt2v}, we obtain \eqref{linear_Hopf_eq_weak_form}.

Suppose that $\w \in L^2(0,T;H^1_0(\Omega))$ with $\w_t \in L^2(0,T;H^{-1}(\Omega))$. Then
$$
\frac{1}{2} \frac{d}{dt} \int_\Omega \brho |\w|^2 \dx - \frac{1}{2} \int_\Omega \brho_t |\w|^2 \dx = \langle \w_t, \brho\w \rangle = \langle \bD_t \w, \rho \w\rangle - \int_\Omega \rho (\bu \cdot \nabla \w) \cdot \w \dx\pd
$$
Since $\w = {\bf 0}$ on $\bdy\Omega$,
$$
\int_\Omega \brho (\bu \cdot \nabla \w) \cdot \w \dx = \frac{1}{2} \int_\Omega \brho \u \cdot \nabla |\w|^2 \dx = - \frac{1}{2} \int_\Omega \div (\brho \bu) |\w|^2 \dx\scl
$$
thus we conclude that if $\w \in L^2(0,T;H^1_0(\Omega))$ with $\w_t \in L^2(0,T;H^{-1}(\Omega))$, then
\begin{equation}\label{integration_of_meterial_derivative}
\langle \bD_t \w, \brho \w\rangle = \frac{1}{2} \frac{d}{dt} \int_\Omega \brho |\w|^2 \dx - \frac{1}{2} \int_\Omega \big[\brho_t + \div (\brho \bu)\big] |\w|^2 \dx\pd
\end{equation}

Let $\bfu$, $\bfv$, $\bfw$ be chosen so that $\w = \bD_t \v - \bfu - \bfv \cdot \nabla \bfw \in L^2(0,T;H^1_0(\Omega))$ with $\w_t \in L^2(0,T;H^{-1}(\Omega))$. Then \eqref{linear_Hopf_eq_weak_form} implies that $\w$ satisfies

\begin{align*}
& \langle \bD_t \w, \brho \Varphi\rangle + \nu \int_\Omega \nabla \v_t : \nabla \Varphi \dx + \int_\Omega \big[\brho_t + \div (\brho \bu)\big] (\bD_t \v \cdot \Varphi) \dx \\
& = \nu \int_\Omega \div (\Delta \v \otimes \bu) \cdot \Varphi \dx - \int_\Omega \brho \big[(\bD_t \bfv) \cdot \nabla \bfw + (\bu \otimes \bfv): \nabla^2 \bfw + \bfv \cdot \nabla \bfw_t\big]\cdot \Varphi \dx \\
&\qquad\quad + \langle \bD_t \sbF, \Varphi\rangle + \int_\Omega (\div \bu) \sbF \cdot \Varphi \dx - \langle \bD_t \bfu, \brho \Varphi\rangle \qquad\quad \Forall \Varphi \in H^1_0(\Omega).
\end{align*}
Letting $\Varphi = \w$ as a test function, using \eqref{integration_of_meterial_derivative} the fact that $\w = {\bf 0}$ on $\bdy\Omega$ implies that
\begin{equation}\label{two_time_differentiated_problem_weak_form_linear_eq_temp}
\begin{array}{l}
\displaystyle{} \frac{1}{2} \frac{d}{dt} \int_\Omega \brho |\w|^2 \dx + \nu \|\nabla \w\|^2_{L^2(\Omega)} - \nu \int_\Omega \nabla \big[\bu \cdot \nabla \v + \bfu + \bfv \cdot \nabla \bfw \big]\cdot \nabla \w \dx \vspace{.15cm}\\
\qquad\displaystyle{} + \int_\Omega \big[\brho_t + \div (\brho \bu)\big] \Big(\bD_t \v \cdot \w - \frac{1}{2} |\w|^2\Big) \dx = \nu \int_\Omega \div (\Delta \v \otimes \bu) \cdot \w \dx \vspace{.15cm}\\
\qquad\qquad\displaystyle{} - \int_\Omega \brho \big[(\bD_t \bfv) \cdot \nabla \bfw + (\bu \otimes \bfv): \nabla^2 \bfw + \bfv \cdot \nabla \bfw_t\big]\cdot \w \dx \vspace{.15cm}\\
\qquad\qquad\qquad\displaystyle{} + \langle \bD_t \sbF, \w\rangle + \int_\Omega (\div \bu) \sbF \cdot \w \dx - \langle \bD_t \bfu, \brho \w\rangle \pd
\end{array}
\end{equation}
Since $\w \in H^1_0(\Omega)$ and $\v,\bu \in X_{T^*}$,
\begin{align*}
&\int_\Omega \div (\Delta \v \otimes \bu) \cdot \w \dx = - \int_\Omega (\Delta \v \otimes \bu) : \nabla \w \dx \nonumber\\&\qquad= - \int_\Omega \big[(\v^i,_k \bu^j),_k - \v^i,_k \bu^j,_k\big] \w^i,_j \dx \nonumber\\
&\qquad = - \int_\Omega (\v^i,_k \bu^j),_j \w^i,_k \dx + \int_\Omega \v^i,_k \bu^j,_k \w^i,_j \dx \nonumber \\
&\qquad = - \int_\Omega (\div \bu) (\nabla \v : \nabla \w) \dx - \int_\Omega  \bu^j \v^i,_{jk} \w^i,_k \dx + \int_\Omega \v^i,_j \bu^k,_j \w^i,_k \dx \pd
\end{align*}
Moreover,
\begin{align*}
& \int_\Omega \nabla \big[\bu \cdot \nabla \v + \bfu + \bfv \cdot \nabla \bfw \big]\cdot \nabla \w \dx = \int_\Omega \big[\bu^j \v^i,_j + \bfu^i + \bfv^j \bfw^i,_j\big],_k \w^i,_k \dx \\
&\qquad = \int_\Omega \big[\bu^j,_k \v^i,_j + \bu^j \v^i,_{jk} + \bfu^i,_k + \bfv^j,_k \bfw^i,_j + \bfv^j \bfw^i,_{jk}\big] \w^i,_k \dx \pd
\end{align*}
Therefore, \eqref{two_time_differentiated_problem_weak_form_linear_eq_temp} implies that
\begin{equation}\label{two_time_differentiated_problem_weak_form_linear_eq}
\begin{array}{l}
\displaystyle{} \frac{1}{2} \frac{d}{dt} \int_\Omega \brho |\w|^2 \dx + \nu \|\nabla \w\|^2_{L^2(\Omega)} + \int_\Omega \big[\brho_t + \div (\brho \bu)\big] \Big(\bD_t \v \cdot \w - \frac{1}{2} |\w|^2\Big) \dx \vspace{.15cm}\\
\qquad\displaystyle{} = - \nu \int_\Omega (\div \bu) (\nabla \v : \nabla \w) \dx + \nu \int_\Omega (\bu^k,_j + \bu^j,_k) \v^i,_j \w^i,_k \dx \vspace{.15cm} \\
\qquad\quad\displaystyle{} + \nu \int_\Omega \big[\bfu^i,_k + \bfv^j,_k \bfw^i,_j + \bfv^j \bfw^i,_{jk}\big] \w^i,_k \dx\Big] \vspace{.15cm}\\
\qquad\quad\displaystyle{} - \int_\Omega \brho \big[(\bD_t \bfv) \cdot \nabla \bfw + (\bu \otimes \bfv): \nabla^2 \bfw + \bfv \cdot \nabla \bfw_t\big]\cdot \w \dx \vspace{.15cm}\\
\qquad\quad\displaystyle{} + \langle \bD_t \sbF, \w\rangle + \int_\Omega (\div \bu) \sbF \cdot \w \dx - \langle \bD_t \bfu, \brho \w \rangle \pd
\end{array}
\end{equation}
Using H\"older's inequality and the continuous embedding $H^1(\Omega) \contsubset L^6(\Omega)$, we find that
\begin{align*}
& \frac{1}{2} \frac{d}{dt} \int_\Omega \brho |\w|^2 \dx + \nu \|\nabla \w\|^2_{L^2(\Omega)}\!+\hspace{-1pt} \int_\Omega \big[\brho_t + \div (\brho \bu)\big] \Big(\bD_t \v \cdot \w - \frac{1}{2} |\w|^2\Big) \dx \\
&\quad \preceq \Big[\|(\div \bu) (\nabla \v)\|_{L^2(\Omega)}\!+\hspace{-1pt} \|(\Def \bu) (\nabla \v)\|_{L^2(\Omega)} \!+\hspace{-1pt} \|\bfu\|_{H^1(\Omega)}\!+\hspace{-1pt} \big(\|\nabla \bfv\|_{L^3(\Omega)}\\
&\qquad\quad +\hspace{-1pt} \|\bfv\|_{L^\infty(\Omega)}\big) \|\bfw\|_{H^2(\Omega)} + \|\brho\|_{L^\infty(\Omega)} \|\bD_t \bfv\|_{L^2(\Omega)} \|\nabla \bfw\|_{L^3(\Omega)} + \big(\|\nabla \brho\|_{L^3(\Omega)} \\
&\qquad  + \|\brho\|_{L^\infty(\Omega)}\big) \|\bD_t \bfu\|_{H^{-1}(\Omega)} \Big] \|\nabla \w\|_{L^2(\Omega)}\\&\qquad+  \|\brho\|_{L^\infty(\Omega)} \|\bfv\|_{L^\infty(\Omega)} \|\bu\|_{L^\infty(\Omega)} \|\bfw\|_{H^2(\Omega)} \|\w\|_{L^2(\Omega)} \\
&\qquad - \int_\Omega (\bfv \cdot \nabla \bfw_t) \cdot \w \dx + \langle \bD_t \sbF, \w\rangle + \int_\Omega (\div \bu) (\sbF \cdot \w) \dx
\end{align*}
and Young's inequality further shows that
\begin{align}
& \frac{1}{2} \frac{d}{dt} \int_\Omega \brho |\w|^2 \dx + \frac{3\nu}{4} \|\nabla \w\|^2_{L^2(\Omega)}\!+\hspace{-1pt} \int_\Omega \big[\brho_t + \div (\brho \bu)\big] \Big(\bD_t \v \cdot \w - \frac{1}{2} |\w|^2\Big) \dx \nonumber\\
&\quad \preceq\|(\div \bu) (\nabla \v)\|^2_{L^2(\Omega)}\!+\hspace{-1pt} \|(\Def \bu) (\nabla \v)\|^2_{L^2(\Omega)} \!+\hspace{-1pt} \|\bfu\|^2_{H^1(\Omega)}\nonumber\\&\qquad+\hspace{-1pt} \big(\|\nabla \bfv\|^2_{L^3(\Omega)}\!+\hspace{-1pt} \|\bfv\|^2_{L^\infty(\Omega)}\big) \|\bfw\|^2_{H^2(\Omega)} \nonumber\\
&\qquad+ \|\brho\|^2_{L^\infty(\Omega)} \|\bD_t \bfv\|^2_{L^2(\Omega)} \|\nabla \bfw\|^2_{L^3(\Omega)} + \big(\|\nabla \brho\|^2_{L^3(\Omega)} + \|\brho\|^2_{L^\infty(\Omega)}\big) \|\bD_t \bfu\|^2_{H^{-1}(\Omega)} \label{key_estimate}\\
&\qquad +  \|\brho\|^2_{L^\infty(\Omega)} \|\bu\|^2_{L^\infty(\Omega)} \|\w\|^2_{L^2(\Omega)} - \int_\Omega (\bfv \cdot \nabla \bfw_t) \cdot \w \dx + \langle \bD_t \sbF, \w\rangle \nonumber\\&\qquad+ \int_\Omega (\div \bu) (\sbF \cdot \w) \dx, \nonumber
\end{align}
where the constant depends only on $\Omega$ and $\nu$. We note that here we do not perform further estimates on $\|(\div \bu) (\nabla \v)\|^2_{L^2(\Omega)}$ and $\|(\Def \bu) (\nabla \v)\|^2_{L^2(\Omega)}$ because different $\v$ might require different approach.

There is one particular case of interest. In the following, we assume that $\brho_t + \div(\brho \bu) = 0$, and $\sbF = - \nabla \bp + \brho \f$, where $\bp \in L^2(0,T;H^2(\Omega))$, $\bp_t \in L^2(0,T;H^1(\Omega))$, $\f \in L^2(0,T;H^1(\Omega))$, and $f_t \in L^2(0,T;H^{-1}(\Omega))$. In this case, we have
$$
\langle \bD_t \sbF, \w\rangle + \int_\Omega (\div \bu) \sbF \cdot \w \dx = - \int_\Omega \big[\nabla \bp_t + \div (\nabla \bp \otimes \bu) \big] \cdot \w \dx + \langle \bD_t \f, \brho \w\rangle \pd
$$
Since $\w \in H^1_0(\Omega)$ and $\bu = \sbV$ on $\bdy\Omega$,
\begin{align*}
& \int_\Omega \big[\nabla \bp_t \hspace{-1pt}+\hspace{-1pt} \div (\nabla \bp \otimes \bu)\big]\cdot \w \dx \\&= -\hspace{-1pt} \int_\Omega \bp_t\hpt\div \w \dx \hspace{-1pt}-\hspace{-1pt} \int_\Omega \bp,_i (\bu-\sbV\,)^j \w^i\!,_j dx \hspace{-1pt}-\hspace{-1pt} \int_\Omega \bp,_i\hspace{-1pt}\sbV^{\hpt\,j} \w^i\!,_j dx \\
& = -\hspace{-1pt} \int_\Omega \bp_t\hpt\div \w \dx \hspace{-1pt}+\hspace{-1pt} \int_\Omega \bp (\bu \hspace{-1pt}-\hspace{-1pt} \sbV\,)^j\!,_i \w^i\!,_j dx \hspace{-1pt}+\hspace{-1pt} \int_\Omega \bp (\bu \hspace{-1pt}-\hspace{-1pt} \sbV\,)^j (\div \w),_j \dx \\&\qquad\qquad\qquad\hspace{-1pt}-\hspace{-1pt} \int_\Omega \bp,_i\hspace{-1pt}\sbV^{\hpt\,j} \w^i\!,_j dx \\
& = -\hspace{-1pt} \int_\Omega \bp_t\hpt\div \w \dx \hspace{-1pt}+\hspace{-1pt} \int_\Omega \bp (\bu \hspace{-1pt}-\hspace{-1pt} \sbV\,)^j\!,_i \w^i\!,_j dx \hspace{-1pt}-\hspace{-1pt} \int_\Omega \div \big[\bp (\bu \hspace{-1pt}-\hspace{-1pt} \sbV\,)\big] (\div \w)\dx \\&\qquad\qquad\qquad\hspace{-1pt}-\hspace{-1pt} \int_\Omega \bp,_i\hspace{-1pt}\sbV^{\hpt\,j} \w^i\!,_j dx \\
& = -\hspace{-1pt} \int_\Omega \big[\bp_t \hspace{-1pt}+\hspace{-1pt} \div (\bp\bu)\big] \div \w \dx \hspace{-1pt}+\hspace{-1pt} \int_\Omega \bp \bu^j\!,_i \w^i\!,_j dx \hspace{-1pt}+\hspace{-1pt} \int_\Omega \div (\bp\hpt\sbV\,) (\div \w)\dx\\&\qquad\qquad\qquad \hspace{-1pt}-\hspace{-1pt} \int_\Omega (\bp\hpt\sbV^{\hpt\,j}),_i \w^i\!,_j dx
\end{align*}
and the fact that $\smallint{\Omega}{}\, (\bp\hpt\sbV^{\hpt\,j}),_i \w^i,_j dx = - \smallint{\Omega}{}\, (\bp\hpt\sbV^{\hpt\,j}),_{ij} \w^i \dx = \smallint{\Omega}{}\, \div (\bp\hpt\sbV\,) (\div \w) \dx$ shows that

$$
\int_\Omega \big[\nabla \bp_t + \div (\nabla \bp \otimes \bu)\big]\cdot \w \dx =  - \int_\Omega \big[\bp_t + \div (\bp\bu)\big] (\div \w) \dx + \int_\Omega \bp \u^j\!,_i \w^i\!,_j dx \pd
$$

Therefore, for $\sbF = -\nabla \bp + \brho \f$ with given regularity on $\bp$ and $\f$, \eqref{key_estimate} implies that

\begin{align}
& \frac{1}{2} \frac{d}{dt} \int_\Omega \brho |\w|^2 \dx + \frac{\nu}{2} \|\nabla \w\|^2_{L^2(\Omega)} \le C \Big[\big(\|\div \bu\|^2_{L^4(\Omega)} + \|\Def \bu\|^2_{L^4(\Omega)}\big) \|\nabla \v\hpt\|^2_{L^4(\Omega)}  \nonumber\\
&\quad\ + \|\bfu\|^2_{H^1(\Omega)} +\big(\|\nabla \bfv\|^2_{L^3(\Omega)}+\|\bfv\|^2_{L^\infty(\Omega)}\big) \|\bfw\|^2_{H^2(\Omega)}\nonumber\\&\qquad\qquad\qquad + \|\brho\|^2_{L^\infty(\Omega)} \|\bD_t \bfv\|^2_{L^2(\Omega)} \|\nabla \bfw\|^2_{L^3(\Omega)} \nonumber\\
&\quad\ + \big(\|\nabla \brho\|^2_{L^3(\Omega)} + \|\brho\|^2_{L^\infty(\Omega)}\big) \big(\|\bD_t \bfu\|^2_{H^{-1}(\Omega)} + \|\f_t\|^2_{H^{-1}(\Omega)} \,+ \|\f\|^2_{H^1(\Omega)} \big) \label{key_estimate_for_u_appendix}\\
&\quad\ + \big\|\bp_t + \div (\bp\bu)\big\|^2_{L^2(\Omega)} + \|\bp\|^2_{L^\infty(\Omega)} \|\nabla \u\|^2_{L^2(\Omega)} + \|\brho\|^2_{L^\infty(\Omega)} \|\bu\|^2_{L^\infty(\Omega)} \|\w\|^2_{L^2(\Omega)} \Big] \nonumber\\
&\quad\ \,- \int_\Omega (\bfv \cdot \nabla \bfw_t) \cdot \w \dx \pd \nonumber
\end{align}

\section{The self-map estimate in the fixed-point construction}
\label{app:fixed_point_self_map_estimate}
This appendix proves Proposition~\ref{prop:fixed_point_self_map_estimate}.  We
keep only the estimates needed for the self-map property; constants denoted by
$C$ may change from line to line but are independent of $M$ after $T^*$ has been
chosen sufficiently small.

\begin{proof}[Proof of Proposition~\ref{prop:fixed_point_self_map_estimate}]
Let $\bu\in C_{T^*}(M)$ be fixed.  By Lemma~\ref{lem:lower_upper_bound_for_rho}
and the inflow boundary condition, choosing $T^*=T^*(M)$ sufficiently small gives
\begin{equation}\label{rho_bounded_by_a_generic_constant}
 c_0:=\frac12\min\{\rol,\rBl\}
 \le \brho \le
 \frac32\max\{\rou,\rBu\}=:c_1
 \qquad\text{in }\Omega\times(0,T^*).
\end{equation}

We next record the density estimates.  Differentiating
\eqref{linear_density_eq.1}, testing by $|\brho,_k|\brho,_k$, and using the
inflow boundary estimate from Appendix~\ref{sec:bdy_integral_estimate} yield
\begin{equation}\label{Drho_LinftyL3_linear_estimate}
\sup_{t\in[0,T^*]}\|\nabla\brho(t)\|_{L^3(\Omega)}^3
\le
C\bigl(\|\nabla\varrho_0\|_{L^3(\Omega)}^3+1\bigr),
\end{equation}
provided $T^*=T^*(M)$ is small enough.  The same second-order transport estimate
as in Section~\ref{sec:H2_estimate_of_rho}, together with
\eqref{rho_bounded_by_a_generic_constant}, gives
\begin{equation}\label{rho_H2_H1t_linear_estimate}
\sup_{t\in[0,T^*]}
\Bigl(
\|\brho(t)\|_{H^2(\Omega)}^2
+
\|\brho_t(t)\|_{H^1(\Omega)}^2
\Bigr)
\le C(M).
\end{equation}

For the temperature equation \eqref{linear_temperature_eq}, we apply
Theorem~\ref{thm:linear_parabolic_blackbox} with
\[
 a_s=-\div\bu,
 \qquad
 F=|\nabla\bu|^2+\brho g,
 \qquad
 V=\tB .
\]
The assumptions on $\bu\in C_{T^*}(M)$ imply
$|\nabla\bu|^2\in L^2(0,T^*;H^1(\Omega))$.  Moreover,
\[
\|\partial_t|\nabla\bu|^2\|_{H^{-1}(\Omega)}
\le
C\|\bu_t\|_{L^2(\Omega)}\|\bu\|_{H^2(\Omega)}^{1/2}
      \|\bu\|_{H^3(\Omega)}^{1/2},
\]
so
\[
\int_0^{T^*}\|\partial_t|\nabla\bu|^2\|_{H^{-1}(\Omega)}^2\,dt
\le C\sqrt{T^*}\,M^2.
\]
Similarly, using \eqref{rho_H2_H1t_linear_estimate},
\[
\|\partial_t(\brho g)\|_{H^{-1}(\Omega)}
\le
C\bigl(\|\brho_t\|_{L^2(\Omega)}\|g\|_{H^1(\Omega)}
+
\|\brho\|_{H^2(\Omega)}\|g_t\|_{H^{-1}(\Omega)}\bigr).
\]
Thus the parabolic regularity theorem gives
\begin{equation}\label{theta_XTstar_linear_estimate}
\|\btheta\|_{X_{T^*}}^2\le F(M)
\end{equation}
for some nondecreasing function $F$.

It remains to estimate the image $\u=\Phi(\bu)$.  Since
$\bu\in C_{T^*}(M)$, after decreasing $T^*$ if necessary we have
\begin{equation}\label{Dbu_LinftyL2_linear_estimate}
\sup_{t\in[0,T^*]}\|\bu(t)\|_{H^1(\Omega)}^2
\le 2\bigl(\|\u_0\|_{H^1(\Omega)}^2+1\bigr).
\end{equation}
Let
\[
 \w:=\bD_t^s\u-\sbV_t-\sbV\cdot\nabla\u .
\]
Then $\w\in H^1_0(\Omega)$, and Lemma~\ref{lem:linearized_material_derivative_estimate},
applied with $(\bfu,\bfv,\bfw)=(\sbV_t,\sbV,\u)$ and
$\bp=\brho\btheta$, gives the boundary-adapted material-derivative estimate.
After estimating the remaining term
$\int_\Omega(\sbV\cdot\nabla\u_t)\cdot\w\,dx$ by integration by parts and using
\eqref{Dbu_LinftyL2_linear_estimate}, one obtains
\begin{equation}\label{Dtu_LinftyL2_L2H1_linear_estimate}
\sup_{t\in[0,T^*]}\|\bD_t^s\u(t)\|_{L^2(\Omega)}^2
+
\int_0^{T^*}\|\nabla\bD_t^s\u\|_{L^2(\Omega)}^2\,dt
\le C_0,
\end{equation}
provided $T^*(M+F(M))\le1$.  Here $C_0$ depends only on the data, not on $M$.

The elliptic estimate applied to \eqref{linear_momentum_eq} yields
\begin{align}
\|\u\|_{H^2(\Omega)}
&\le C\bigl(
\|\bD_t^s\u\|_{L^2(\Omega)}+
\|\btheta\|_{H^1(\Omega)}+
\|\f\|_{L^2(\Omega)}+
\|\sbV\|_{H^2(\Omega)}
\bigr), \label{H2_elliptic_estimate_for_linear_eq}\\
\|\u\|_{H^3(\Omega)}
&\le C\bigl(
\|\bD_t^s\u\|_{H^1(\Omega)}+
\|\btheta\|_{H^2(\Omega)}+
\|\f\|_{H^1(\Omega)}+
\|\brho\|_{H^2(\Omega)}\|\btheta\|_{L^\infty(\Omega)}
\bigr). \label{H3_elliptic_estimate_for_linear_eq}
\end{align}
Combining these estimates with \eqref{Dtu_LinftyL2_L2H1_linear_estimate} and
\eqref{theta_XTstar_linear_estimate}, and using the smallness of $T^*$, gives
\begin{equation}\label{u_LinftyH2_L2H3_linear_estimate}
\sup_{t\in[0,T^*]}\|\u(t)\|_{H^2(\Omega)}^2
+
\int_0^{T^*}\|\u\|_{H^3(\Omega)}^2\,dt
\le C_0.
\end{equation}
Since $\u_t=\bD_t^s\u-\bu\cdot\nabla\u$, we also obtain
\begin{equation}\label{ut_LinftyL2_L2H1_linear_estimate}
\sup_{t\in[0,T^*]}\|\u_t(t)\|_{L^2(\Omega)}^2
+
\int_0^{T^*}\|\nabla\u_t\|_{L^2(\Omega)}^2\,dt
\le C_0.
\end{equation}

Finally, differentiating \eqref{linear_momentum_eq} in time in the material form
implies, in $H^{-1}(\Omega)$,
\[
\brho(\bD_t^s)^2\u+
\nabla\bp_t+\div(\nabla\bp\otimes\bu)
=
\mu\Delta\u_t+
\mu\div(\Delta\u\otimes\bu)+\brho\bD_t^s\f .
\]
The estimates already obtained imply
\begin{equation}\label{utt_L2H-1_linear_estimate}
\int_0^{T^*}\|\u_{tt}\|_{H^{-1}(\Omega)}^2\,dt\le C_0.
\end{equation}
Combining \eqref{u_LinftyH2_L2H3_linear_estimate},
\eqref{ut_LinftyL2_L2H1_linear_estimate}, and
\eqref{utt_L2H-1_linear_estimate}, we conclude that
\[
 \|\u\|_{X_{T^*}}\le C_*,
\]
where $C_*$ depends only on the data and is independent of $M$.  This proves the
self-map estimate.
\end{proof}

\section*{Acknowledgments}
The authors used ChatGPT and Perplexity to assist with translating this
manuscript into English and polishing its language. The authors carefully
reviewed and revised all resulting text and assume responsibility for the
entire content.

\bibliographystyle{plain}
\bibliography{bibfile}

\end{document}